\numberwithin{equation}{section}
\newtheorem{proposition}{Proposition}[section]
\newtheorem{theorem}[proposition]{Theorem}
\newtheorem{lemma}[proposition]{Lemma}
\theoremstyle{definition}
\theoremstyle{remark}
\renewcommand\P{\mathbb{P}}
\newcommand{\PP}{\mathbb{P}}
\newcommand\E{\mathbb{E}}
\newcommand{\EE}{\mathbb{E}}
\newcommand\succg{\gamma}
\newcommand\geo{\gamma}
\newcommand\iv{s}
\newcommand{\Ma}{\mathcal{M}^\ast}
\newcommand{\dsub}{d^{\#}}
\newcommand{\rmap}{M}
\renewcommand{\l}{\lambda}
\newcommand{\oo}{\infty}
\newcommand{\se}{\subseteq}
\newcommand{\specdim}{d_{\mathrm{s}}}
\newcommand{\uiptroot}{\mathfrak{r}}
\newcommand{\sgt}{\mathcal{T}}
\newcommand{\Dec}{\mathcal{D}}
\newcommand{\indic}[1]{\mathbf{1}_{\{#1\}}}
\newcommand\cE{\mathcal{E}}
\newcommand\RR{\mathbb{R}}
\newcommand\Reff{\mathrm{R}^{\mathrm{eff}}}
\newcommand{\T}{\mathcal{T}}
\newcommand{\bea}{\begin{eqnarray}}
\newcommand{\eea}{\end{eqnarray}}
\def\void{}
\def\labelmark{}
\newenvironment{formula}[1]{\def\labelname{#1}
\ifx\void\labelname\def\junk{\begin{displaymath}}
\else\def\junk{\begin{equation}\label{\labelname}}\fi\junk}%
{\ifx\void\labelname\def\junk{\end{displaymath}}
\else\def\junk{\end{equation}}\fi\junk\labelmark\def\labelname{}}
\def\junk{\end{array}\end{displaymath}}
\def\junk{\end{array}\right.\end{equation}}
\def\labelname{}\def\junk{}
\newcommand{\beq}{\begin{formula}}
\newcommand{\eeq}{\end{formula}}
\newcommand{\beqv}{\begin{formula}{}}
\newenvironment{romenumerate}[1][0pt]{
\addtolength{\leftmargini}{#1}\begin{enumerate}
 }{\end{enumerate}}
\newcounter{oldenumi}
{\setcounter{oldenumi}{\value{enumi}}
\begin{romenumerate} \setcounter{enumi}{\value{oldenumi}}}
{\end{romenumerate}}
\xdef\klockan{\the\count1.0\the\count255}
\xdef\klockan{\the\count1.\the\count255}\fi
\newcommand\marginal[1]{\marginpar{\raggedright\parindent=0pt\tiny #1}}
\newcommand\REM[1]{{\raggedright\texttt{[#1]}\par\marginal{XXX}}}
\newcommand\gb{\beta}
\newcommand\gd{\delta}
\newcommand\eps{\varepsilon}
\def\rompar(#1){\textup(#1\textup)}
\def\xexp(#1){e^{#1}}
\newcommand\eqd{\overset{\mathrm{d}}{=}}
\newcommand\ZZ{\mathbb Z}
\newcounter{CC}
\newcounter{cc}
\newcommand{\out}{\text{out}}
\newcommand{\map}{\mathbf{m}}
\begin{document}
\title
{Recurrence of bipartite planar maps}

\date{\today}  
\subjclass[2000]{05C80, 05C81, 05C05, 60J80, 60F05} 
\keywords{Planar maps, local limits, simply generated trees, random walk.}

\thanks{Research supported by the Knut and Alice Wallenberg Foundation.}

\author{Jakob E. Bj\"ornberg}
\author{Sigurdur \"Orn Stef\'ansson} 

\address{Department of Mathematics, Uppsala University, PO Box 480,
SE-751~06 Uppsala, Sweden}

\maketitle

\begin{abstract} 

This paper concerns random bipartite planar maps which are defined by
assigning weights to their faces. The paper
presents a threefold contribution to the theory.  Firstly, we prove
the existence of the local limit for all choices of weights and
describe it in terms of an infinite mobile.  Secondly, we show that
the local limit is in all cases almost surely recurrent.  And thirdly,
we show that for certain choices of weights the local limit has
exactly one face of infinite degree and has in that case spectral
dimension $4/3$ (the latter requires a mild moment condition).
\end{abstract}

\section{Introduction and main results}

A planar map is
a finite connected graph
embedded in the 2-sphere.  Motivated by questions of universality, 
Marckert and Miermont~\cite{marckert:2007} 
introduced a class of probability 
distributions on bipartite planar maps, where a weight
$q_{d/2}$ is given to each face of degree $d$.  
(A bipartite map is one in which all faces have even degree,
hence $d/2$ is an integer.)
Due to the
discovery of certain bijections between planar maps and 
labeled trees \cite{bouttier:2004,schaeffer:thesis}
progress on this model of random planar maps has been
tremendous, see e.g. \cite{legall:notes} for a recent review.  Much of the focus
has been on the \emph{scaling limit} where the map is 
rescaled by some power of its size and one studies
the limit in the Gromov--Hausdorff sense of 
the corresponding metric space, see 
e.g.~\cite{janson:2012sl,legall:2007,legall:2011,legall:unique,miermont:unique}.
Other papers focus on the \emph{local limit}, 
where one does not rescale the graph and
the limiting object, when it exists, is 
an infinite graph, see e.g. \cite{angel:2003, chassaing:2006,  curien:2012a, krikun:2005, menard:2013} for results on special cases and related models.  So far the local limit
has been shown to exist only for certain choices
of the weights $q_i$.

This paper presents three main contributions to the theory
of local limits of planar maps
(precise definitions and statements appear in the following
subsections).  Our first main
result is the existence, and a description, of the local
limit for arbitrary choices of the weigths $q_i$,
see Theorem~\ref{thm:locallimit}.  
 We show this by using a connection to
simply generated trees, and a recent general limit
theorem due to Janson for the latter object \cite{janson:2012sgt}. The approach is similar to the one of Chassaing and Durhuus \cite{chassaing:2006} and later Curien, Ménard and Miermont \cite{curien:2012a} in the case of quadrangulations.
Our second main result is that the limit map is almost surely
recurrent for all choices of the weights $q_i$, see
Theorem~\ref{thm:recurrence}.  This is proved using a recent general
result of Gurel-Gurevich and Nachmias \cite{gurel:2013}, 
and relies on establishing exponential tails of the
degrees in the local limit (Theorem~\ref{deg_thm}).
Our third main result
 focuses on finer properties of random walk in
the local limit, for parameters in a certain 
`condensation phase'.  In this phase, the local limit almost surely
has a face of infinite degree.  Under an additional moment
condition, we show that the \emph{spectral dimension}
of the map in this case is almost surely $4/3$
(see Theorem~\ref{th:spec}).  
Roughly speaking this follows from the fact that, from the point of view
of a simple random walk, the map is tree-like
(although it is in general \emph{not} a tree).  This result relies on
recent general methods for expressing the spectral dimension
in terms of volume and resistance growth
due to Kumagai and Misumi \cite{kumagai:2008}.  We now define
the model more precisely.

\subsection{Planar maps} \label{s:planarmaps}

A planar map is a finite, connected graph embedded in the 2-sphere
and viewed up to orientation preserving homeomorphisms of the
sphere. A connected component of the complement of the edges of the
graph is called a \emph{face}. The \emph{degree} 
of a face $f$, denoted by $\deg(f)$
is the number of edges in its boundary;  
the edges are counted with multiplicity,
meaning that the same edge is counted twice if both its sides are
incident to the face.  We sometimes
consider \emph{rooted} and \emph{pointed} planar maps: the
\emph{root} is then a distinguished oriented edge $e = (e_{-},e_+)$,
and the  \emph{point} is a fixed marked  vertex, which will be denoted by
$\rho$.  All maps we consider are bipartite; this is
equivalent to each face having an even degree. 
We denote the set of finite
bipartite, rooted and pointed planar maps by
$\mathcal{M}_\mathrm{f}^\ast$, and we denote the subset of maps 
with $n$ edges by $\mathcal{M}_n^\ast$. 
For a planar map $\map$ we denote the set of
vertices, edges and faces by $V(\map)$, $E(\map)$ and $F(\map)$
respectively.
For a map $\map\in \mathcal{M}_\mathrm{f}^\ast$ and an integer $r\geq 0$, 
let $B_r(\map)$ denote the
planar subgraph of $\map$ spanned by the set of vertices at a graph
distance $\leq r$ from the origin $e_{-}$ of the root edge.  
Note that $B_r(\map)$ is a planar map;  for $r\geq 1$ it
is rooted, and it is pointed if the vertex $\rho$ is at a graph
distance $\leq r$ from $e_{-}$.  Define a metric on
$\mathcal{M}_\mathrm{f}^\ast$ by
\begin {equation}
d_\mathcal{M}(\map_1,\map_2) = \left(1+\sup\left\{r :
B_r(\mathbf{m_1})=B_r(\mathbf{m_2})\right\}\right)^{-1},\quad
\map_1,\map_2\in \mathcal{M}_\mathrm{f}^\ast.
\end {equation}
This metric on rooted graphs was introduced 
in~\cite{benjamini:2001}.
Denote by $\mathcal{M}^\ast$ the completion of $\mathcal{M}_\mathrm{f}^\ast$
with respect to $d_\mathcal{M}$.  Thus $\mathcal{M}^\ast$ is a metric
space, which we further make into a measure space by 
equipping it with the Borel $\sigma$-algebra. The elements
of $\mathcal{M}^\ast$ which are not finite are called \emph{infinite} planar
maps and the set of infinite planar maps is denoted by $\Ma_\infty$. 
An infinite planar map $\map$ can be represented by an equivalence class of
sequences $(\map_i)_{i\geq 0}$ of finite planar maps having the property that for each $r\geq 0$,
$B_r(\map_i)$ is eventually the same constant for every
representative.
We then call $\map$ the \emph{local limit} of the sequence
$(\map_i)_{i\geq 0}$.
The equivalence class defines a
unique infinite rooted graph, which may or may not be
pointed. 

We will consider probability measures on $\Ma$ which are
defined via a sequence $(q_i)_{i\geq 1}$ of non-negative numbers, as follows.
Define a sequence of probability measures $(\mu_n)_{n\geq 1}$ on $\Ma$
by first assigning to each finite map $\map$ a weight
\begin {equation}
 W(\map) = \prod_{f\in F(\map)} q_{\deg(f)/2}
\end {equation}
and setting
\begin {equation}\label{main_def}
 \mu_n(\map) = \frac{W(\map)}{\sum_{\map'\in\Ma_n}W(\map')},
\mbox{ if } \map\in\Ma_n \mbox{ (otherwise 0)}.
\end {equation}
This definition was first introduced by
Marckert and Miermont~\cite{marckert:2007}.

\subsection{Main results} \label{ss:mainresults}
Our first main result
establishes a weak limit of $(\mu_n)_{n\geq1}$ in the topology
generated by $d_\mathcal{M}$. In order to exclude the trivial case
when all faces have degree two we demand that $q_i > 0$ for some
$i\geq 2$. Certain qualitative properties of the limit map can 
be determined by the value of a quantity
$\kappa$ which we will now define.  For convenience, we will define a new sequence $(w_i)_{i\geq 0}$, expressed in terms of the parameters $(q_i)_{i\geq 1}$ as
\begin {equation} \label{wqdef} 
 w_i = \binom{2i-1}{i-1} q_i, \quad \text{for} ~i\geq 1
\end {equation}
and we let $w_0  = 1$. The reason for this definition 
will become clear when we explain the connection between 
the maps and simply generated trees in Section \ref{s:sgt}. 
Denote the generating function of $(w_i)_{i\geq 0}$ by
\begin{equation}
g(z) = \sum_{i=0}^\infty w_i z^i
\end{equation}
 and denote its radius of convergence by $R$. 
If $R > 0$ define 
\begin {equation} \label{defgamma}
 \gamma = \lim_{t\nearrow R} \frac{tg'(t)}{g(t)}
\end {equation}
and if $R = 0$ let $\gamma = 0$. 
Note that the ratio in~\eqref{defgamma} is continuous and increasing
in $t$ by~\cite[Lemma~3.1]{janson:2012sgt}.
Next define the number $\tau \geq 0$ to be
\begin {enumerate}
\item the (unique) solution $t\in(0,R]$ to $tg'(t)/g(t) = 1$, if $\gamma \geq 1$; or 
\item $\tau = R$, if $\gamma < 1$.
\end {enumerate}
Then define the probability weight sequence $(\pi_i)_{i\geq 0}$ by
\begin {equation} \label{pi}
\pi_i = \frac{\tau^i w_i}{g(\tau)}
\end {equation}
and let $\xi$ be a random variable distributed by 
$(\pi_i)_{i\geq 0}$.  We will view $\xi$ as an 
offspring distribution of a Galton--Watson process 
and we denote its expected value by $\kappa$. One easily finds that
\begin {equation}
 \kappa = \min\{\gamma,1\} \leq 1
\end {equation}
i.e.~$\xi$ is either critical or sub-critical. These definitions come
from~\cite[Theorem~7.1]{janson:2012sgt}, restated below as
Theorem~\ref{th:janson}.

Here is a brief remark about how our notation relates to
that of~\cite{marckert:2007},
see also~\cite[Appendix~A]{janson:2012sl}
for a discussion about this.  Instead of our generating
function $g$, Marckert and Miermont 
consider $f=f_{\mathbf{q}}$ given by
$f(z)=\sum_{k\geq0}w_{k+1}z^k$.  Thus $g(z)=1+zf(z)$,
and the radius of convergence $R$ of $g$ equals that of $f$
(called $R_{\mathbf{q}}$ in~\cite{marckert:2007}).
In~\cite{marckert:2007} the weights $(q_i)_{i\geq 1}$ are
called \emph{admissible} if there is a solution 
$z\in(0,\infty)$ to $f(z)=1-1/z$, which in our notation
becomes $z=g(z)$.  In the case $\gamma\geq1$ we have
$\tau g'(\tau)=g(\tau)$, which is equivalent to $\tau^2f'(\tau)=1$,
the form used in~\cite{marckert:2007}.  In this 
case Marckert and Miermont refer to the weights $(q_i)_{i\geq1}$
as \emph{critical}, and their $Z_{\mathbf{q}}$ equals
our $\tau$.

Before stating the first theorem we recall some definitions. Firstly
an infinite graph is said to be one-ended if the complement of every
finite connected subgraph
contains exactly one infinite
connected component. Secondly, we recall the construction of the uniform
infinite plane tree (UIPTree). 
It is the infinite plane tree
distributed by the weak local limit, as $n\rightarrow \infty$, of the
uniform measures on the set of all plane trees with $n$ edges. An
explicit construction
(when $w_i =1$ for all $i$) is given in Section \ref{s:sgt}. The UIPTree can
also be viewed as the weak local limit of a Galton--Watson tree with
offspring distribution $2^{-i-1}$ conditioned to survive.  
\begin{theorem}\label{thm:locallimit}
For all choices of weights $(q_i)_{i\geq 1}$ 
the measures $\mu_n$ converge
weakly to some probability measure $\mu$ (in the topology
generated by $d_\mathcal{M}$).  The infinite map with
distribution $\mu$ is almost surely one-ended
and locally finite.
If $\kappa = 1$  all faces
are of finite degree. 
If $\kappa < 1$ the map 
contains exactly one face of infinite
degree. 
If $\kappa = 0$ the limit is the UIPTree.
\end{theorem}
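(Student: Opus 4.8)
The plan is to transport the problem to the setting of simply generated trees by a bijection, apply Janson's local limit theorem there, and then push the resulting convergence back to maps. First I would recall a Bouttier--Di Francesco--Guitter type bijection \cite{bouttier:2004} that encodes a pointed, rooted bipartite map $\map\in\Ma_n$ by a labelled plane tree (a \emph{mobile}) whose vertices correspond to the faces of $\map$, a face of degree $2i$ becoming a vertex of out-degree $i$, and whose integer labels record graph distances to the marked vertex $\rho$. The essential bookkeeping step is to sum the weight $W(\map)=\prod_{f}q_{\deg(f)/2}$ over the admissible labellings: for a face of degree $2i$ the number of admissible ways to distribute the labels around it equals $\binom{2i-1}{i-1}$, so that after summing out the labels the weight of the underlying unlabelled tree becomes $\prod_v w_{k_v}$, where $k_v$ is the out-degree of $v$ and $w_i=\binom{2i-1}{i-1}q_i$, with the convention $w_0=1$ for leaves. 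This is exactly the definition \eqref{wqdef}, and it shows that under the bijection $\mu_n$ pushes forward to the law of a simply generated tree with weight sequence $(w_i)$, conditioned on the size corresponding to $n$ edges, decorated with a uniformly chosen admissible labelling.

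Next I would apply Theorem~\ref{th:janson} to this conditioned tree. It yields a local limit $\hat{\T}$ that is an infinite plane tree: when $\gamma\geq1$ (so $\kappa=1$) it is the critical size-biased Galton--Watson tree built from $(\pi_i)$, with a single infinite spine and only finite vertex degrees; when $0<\gamma<1$ (so $0<\kappa<1$) it displays \emph{condensation}, namely a unique vertex of infinite degree off which hang independent $(\pi_i)$-Galton--Watson trees, the spine being finite and terminating at that vertex.

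The principal obstacle is to transfer this convergence through the bijection, deducing weak convergence of $\mu_n$ in the metric $d_\mathcal{M}$ from local convergence of the decorated trees. The difficulty is that the bijection is \emph{not} local: the ball $B_r(\map)$ around the root is reconstructed from the tree \emph{together with} its labels, and a priori no bounded region of the tree determines it. Following the strategy of \cite{chassaing:2006,curien:2012a}, I would show that $B_r(\map)$ is a measurable function of a portion of the labelled tree that is finite with high probability; this requires controlling the label (distance) process so that only finitely many vertices carry labels in any bounded range, and verifying that the reconstruction map is almost surely continuous at the limit. Granting this, the joint local convergence of the trees and their label processes supplied by Theorem~\ref{th:janson} yields $\mu_n\Rightarrow\mu$ and identifies $\mu$ as the law of the map associated with $\hat{\T}$.

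Finally I would read off the stated properties. The dictionary \emph{face of degree $2i$} $\leftrightarrow$ \emph{vertex of out-degree $i$} gives the face-degree trichotomy directly: if $\kappa=1$ every vertex of $\hat{\T}$ has finite degree, so all faces are finite; if $\kappa<1$ the unique infinite-degree vertex of $\hat{\T}$, being a face-vertex, produces exactly one face of infinite degree. Local finiteness of the map holds because each map-vertex is recovered from finitely many vertices of $\hat{\T}$ and is thus incident to finitely many edges, the infinite degree living only in a face. One-endedness requires a little more care: in the critical case it follows from the single infinite spine of $\hat{\T}$, whereas in the condensation case $\hat{\T}$ itself is not one-ended, and instead one argues that the boundary of the unique infinite face is a single infinite path along which finite pieces are grafted, so that the complement of any finite ball of the map has exactly one infinite component. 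The boundary case $\kappa=0$ corresponds to $R=0$: then the weights grow so fast that a size-$n$ map concentrates on those with a single face, i.e.\ on trees, and since every tree with $n$ edges has its one face of degree $2n$ and hence the common weight $q_n$, the conditional law is uniform; the limit is therefore the local limit of uniform plane trees, namely the UIPTree (this can also be extracted directly from Theorem~\ref{th:janson} in the degenerate case $R=0$).
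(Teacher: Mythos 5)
Your overall strategy --- encode maps by mobiles via the BDG bijection, sum out the labels to land on simply generated trees with weights $w_i=\binom{2i-1}{i-1}q_i$, apply Janson's theorem, and transfer the convergence back through a continuity argument in the spirit of Chassaing--Durhuus and Curien--M\'enard--Miermont --- is exactly the route the paper takes (via Lemma~\ref{lem:mu}, the bijection $\Psi$ of Section~\ref{ss:bijection}, Theorem~\ref{th:janson}, Theorem~\ref{th:tildenu} and Proposition~\ref{p:contPhi}). The weight bookkeeping, the trichotomy in $\kappa$, and the identification of $\kappa=0$ with $R=0$ are all correct.

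There is, however, one genuine hole: you assert that the ``joint local convergence of the trees and their label processes'' is ``supplied by Theorem~\ref{th:janson}''. It is not --- that theorem concerns unlabelled trees only, and the convergence of the labels must be established separately after the tree convergence. For black vertices of bounded degree this is immediate (the increments are uniform on a fixed finite set $E_r$), but in the condensation case $\kappa<1$ the limit tree has a vertex whose degree diverges along the sequence, and one must prove that a uniform element of $E_r$, restricted to a window of fixed size, converges as $r\to\infty$ to i.i.d.\ increments with law $\P(X=k)=2^{-k-2}$, $k\geq -1$; this is the balls-in-boxes/Stirling computation in Lemma~\ref{lem:tildemu}, and it also underlies the verification that the limiting labels satisfy the mobile conditions (infimum $-\infty$ on both sides of the infinite-degree vertex), which is precisely what guarantees that every vertex has a successor and that the reconstructed map is locally finite. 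Without this step the limiting labelled object, and hence the limit map, is not identified. A smaller issue: your claim that for $R=0$ the measure $\mu_n$ concentrates on single-face maps is neither needed nor justified; the paper instead observes that the limit mobile is the deterministic star with uniformly labelled white leaves, whose image under the BDG construction is a uniform plane tree, whence the UIPTree (your parenthetical alternative is the correct route).
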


Special cases of this result have been established previously,
see \cite{chassaing:2006,curien:2012a,krikun:2005} for the case of uniform quadrangulations. 
Other related results in the case of non-bipartite graphs have been established for uniform triangulations \cite{angel:2003}  and uniform maps \cite{menard:2013}.

The proof of Theorem \ref{thm:locallimit} 
appears at the end of Section~\ref{s:trees} and
relies on two bijections: first a bijection
due to Bouttier, Di Francesco and Guitter (BDG) \cite{bouttier:2004}
 from $\mathcal{M}_{\mathrm{f}}^\ast$ to a class of labelled trees 
called \emph{mobiles}, and then a bijection which maps random 
mobiles of the form we consider to \emph{simply generated trees}.
In \cite{janson:2012sgt}, Janson established a general convergence result
for simply generated trees, which allows us to deduce the 
corresponding convergence result for planar maps.  This 
correspondence was previously used by Janson and Stefánsson 
to study scaling limits
of planar maps in \cite{janson:2012sl}. 
We note here that one may deduce more details about the structure
of the limiting map than those stated in
Theorem \ref{thm:locallimit} from the upcoming
Theorem~\ref{th:tildenu}.  The latter result concerns the 
local limit of the mobiles along with the procedure of 
constructing the maps out of the mobiles.  
The local limit of the mobiles has  an
explicit description in terms of a multi-type 
Galton--Watson process as will be explained in 
Section~\ref{ss:tildenu}. We expect that, similarly
to the case of quadrangulations~\cite{curien:2012a},
one may recover the infinite mobile from the infinite map.
We remark here that the bipartite case, on which we focus,
is easier since the BDG bijection has a particularly simple 
form in this case, which directly gives the correspondence
with simply generated trees.  In~\cite{bouttier:2004}
bijections between trees and more general types of
maps are described, see also~\cite{miermont2006invariance}.

Throughout the paper, we will let $M$ denote the infinite random map
distributed by $\mu$ from Theorem \ref{thm:locallimit}. 
Recall
that simple random walk on a locally finite graph
$G$ is a Markov chain starting at some specified vertex,
which at each integer time step  moves to a uniformly chosen neighbour.
Recall also that $G$ is \emph{recurrent}
if simple random walk returns to its starting point
with probability 1.  Our second
main result is the following,
and is proved in Section~\ref{s:rec}:
\begin{theorem}\label{thm:recurrence}
For all choices of the weights $q_i$,
the map $M$ is almost surely recurrent.
\end{theorem}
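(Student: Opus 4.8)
The plan is to invoke the recurrence criterion of Gurel-Gurevich and Nachmias~\cite{gurel:2013}: a random rooted graph that arises as a distributional limit of finite \emph{planar} graphs, and whose root has a degree distribution with an exponential tail, is almost surely recurrent. Two ingredients must therefore be supplied. First, we must realise $M$ as such a distributional limit of finite planar maps, rooted in a manner compatible with the hypotheses of~\cite{gurel:2013}. Second, and this is the substantive point, we must establish that the degrees in the local limit have an exponential tail (Theorem~\ref{deg_thm}).

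For the first ingredient, Theorem~\ref{thm:locallimit} already exhibits $\mu$ as the weak local limit of the measures $\mu_n$, each of which is supported on finite planar maps, so $M$ is a distributional limit of finite planar graphs. Some care is needed to reconcile the rooting conventions: the criterion of~\cite{gurel:2013} is phrased for graphs rooted at a suitably random vertex, whereas our maps are rooted at an oriented edge $e=(e_{-},e_{+})$ and carry a marked point $\rho$. Since the weight $W(\map)$ in~\eqref{main_def} depends only on the underlying unrooted, unpointed map and not on the choice of root edge or marked point, the measures $\mu_n$ possess the re-rooting invariance that yields the unimodularity/distributional-limit structure required to apply~\cite{gurel:2013}. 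Note also that, although in the condensation regime $\kappa<1$ the map $M$ possesses a face of infinite degree, Theorem~\ref{thm:locallimit} guarantees that $M$ is almost surely \emph{locally finite}; hence all vertex degrees, and in particular $\deg(e_{-})$, are almost surely finite and it is meaningful to speak of their tails.

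The heart of the proof is the second ingredient, Theorem~\ref{deg_thm}. The strategy is to transport the question, via the BDG bijection and the bijection with simply generated trees, to the explicit multi-type Galton--Watson description of the infinite mobile furnished by Theorem~\ref{th:tildenu}. Under this correspondence the degree of a vertex of the map is encoded by the local offspring-and-label structure of the mobile around the corresponding vertex, so that its tail can be estimated directly within the branching model. The main obstacle is precisely the condensation regime: there the offspring distribution $(\pi_i)_{i\geq0}$ of the associated Galton--Watson process has only a heavy (non-exponential) tail, since $\tau=R$. Crucially, however, these heavy-tailed offspring numbers correspond to vertices of large \emph{face} degree in the map rather than to vertices of large graph degree; the task is therefore to separate the two contributions in the mobile and to show that, irrespective of the face structure, the graph degree of a vertex still has an exponential tail. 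Once Theorem~\ref{deg_thm} is obtained in this way, combining it with the first ingredient and applying~\cite{gurel:2013} gives the almost sure recurrence of $M$, completing the proof.
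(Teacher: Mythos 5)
Your proposal follows essentially the same route as the paper: apply the Gurel-Gurevich--Nachmias criterion, verify that $e_-$ has the stationary (degree-biased) distribution under $\mu_n$ because the weight $W(\map)$ is independent of the choice of root edge, and reduce everything to the exponential tail of $\deg(e_-)$, i.e.\ Theorem~\ref{deg_thm}. The only caveat is that you treat Theorem~\ref{deg_thm} essentially as a black box and sketch its proof only in outline; in the paper this is where all the work lies, via a decomposition of $\deg(e_-)$ into successors (counted by corners, hence geometric via $\xi^\circ$) and predecessors (grouped and dominated by geometric variables through a stopping-event argument along the counterclockwise contour sequence), but as a proof of Theorem~\ref{thm:recurrence} itself your reduction is correct and matches the paper.
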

The proof relies on a recent result  on recurrence
of local limits~\cite{gurel:2013}.  To be able to apply this result we show that 
the degree of a typical vertex in $M$ has an exponential tail,
see Theorem~\ref{deg_thm}.  We note here that the degree
of a typical \emph{face} need not have exponential tails
(even in the case $\kappa=1$).

Our third main result
 concerns the case $\kappa<1$, when $M$ has a
unique face of infinite degree. This phase has been referred to as the
condensation phase in the corresponding models of simply generated
trees \cite{janson:2012sgt,jonsson:2011}. Our results concern the
asymptotic behaviour of the return probability of a simple random walk
after a large number of steps. 
Let $p_G(n)$ be the probability that simple
random walk on $G$ is back at its starting point 
after $n$ steps. The {\it spectral dimension} of $G$ is defined
as 
\begin {equation}
 \specdim(G) = -2\lim_{n\rightarrow \infty} 
\frac{\log(p_G(2n))}{\log n}
\end {equation}
provided the limit exists. It is simple to check that the limit is
independent of the initial location of the walk if $G$ is
connected. Let $\xi$ be the random variable defined below (\ref{pi})
and recall that $\kappa=\EE(\xi)$.
\begin {theorem}\label{th:spec}
If $\kappa< 1$ and if there exists $\beta > 5/2$
such that $\mathbb{E}(\xi^\beta) < \infty$ then 
almost surely $\specdim(M) = 4/3$.
\end {theorem}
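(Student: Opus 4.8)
The plan is to invoke the general criterion of Kumagai and Misumi \cite{kumagai:2008}, which expresses the spectral dimension of a strongly recurrent random graph through two exponents: the growth exponent $d_f$ of the volume of balls and the growth exponent $\zeta$ of the effective resistance from the root to the complement of a ball. For a graph obeying the two-sided polynomial bounds required by their theorem one has $\specdim(M) = 2 d_f / (d_f + \zeta)$, so it suffices to prove that $M$ has $d_f = 2$ and $\zeta = 1$, since then $2\cdot 2 / (2+1) = 4/3$. These are exactly the exponents of Kesten's critical tree, which quantifies the heuristic that $M$ is tree-like for the walk, and recurrence is already supplied by Theorem~\ref{thm:recurrence} so that the resistance metric is finite and well defined. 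Thus the entire problem reduces to quantitative volume and resistance estimates for $M$ in the condensation phase $\kappa<1$, with enough tail control to meet the hypotheses of \cite{kumagai:2008}.

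For the volume I would use the explicit description of the infinite mobile from Theorem~\ref{th:tildenu} together with the BDG bijection \cite{bouttier:2004}, transferring graph-distance information in $M$ to label information on the mobile. Under BDG the graph distance from the marked vertex to a map-vertex equals, up to an additive constant, its mobile-label shifted by the global minimal label; consequently $B_r(M)$ corresponds to the set of mobile-vertices whose label lies below a threshold of order $r$, and $\lvert V(B_r(M))\rvert$ can be read off from the label process along the mobile. In the condensation phase the infinite mobile is a finite spine terminating at a single vertex of infinite degree, carrying infinitely many independent finite Galton--Watson subtrees with offspring $\xi$; the labels performed along this structure fluctuate diffusively, and a second-moment analysis of how many vertices fall below the threshold of order $r$ should give $\lvert V(B_r(M))\rvert$ of order $r^{2}$, that is $d_f = 2$, with the polynomial moment bounds on $\lvert V(B_r(M))\rvert / r^{2}$ and its reciprocal that \cite{kumagai:2008} demands.

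For the resistance the point is that $M$, although not a tree, is assembled by gluing uniformly controlled finite map-pieces along a backbone governed by the infinite face, so that the effective resistance grows only linearly in the distance. I would establish $\Reff\big(e_-, B_r(M)^{c}\big) \asymp r$, hence $\zeta = 1$, by exhibiting a spanning subtree whose resistance is of order $r$ (Rayleigh monotonicity then gives the matching upper bound on $\Reff$), and by a cutset or Nash--Williams argument for the lower bound showing that the extra edges inside each glued piece, and the single infinite face, do not create macroscopically many disjoint channels to infinity. The essential mechanism is that a \emph{unique} face of infinite degree cannot lower the resistance exponent below that of the underlying tree: the structure is series-dominated, so resistances accumulate to order $r$ rather than being reduced to a smaller power.

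The moment hypothesis $\mathbb{E}(\xi^{\beta})<\infty$ for some $\beta>5/2$ enters precisely in controlling the tails needed for both estimates: it guarantees that the finite subtrees hanging off the infinite-degree vertex have light enough size- and height-tails for the volume and the resistance to concentrate around $r^{2}$ and $r$ with uniform-in-$r$ polynomial moment bounds, which is what turns the pointwise exponents into the simultaneous upper and lower heat-kernel bounds of \cite{kumagai:2008}. I expect the main obstacle to be the resistance estimate rather than the volume: proving rigorously that the presence of the infinite face does not depress the resistance exponent below $1$ requires a careful network reduction of the glued finite pieces, and pinning down the exact threshold $\beta>5/2$ in the tail bounds — so that the fluctuation control holds in both directions at once — is the most delicate technical point.
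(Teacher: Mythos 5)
Your overall strategy --- verifying the Kumagai--Misumi criterion by establishing volume growth of order $R^2$ and resistance growth of order $R$, so that $\specdim(M)=2\cdot2/(2+1)=4/3$ --- is exactly the framework the paper uses. However, two of your steps contain genuine gaps.

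First, the volume estimate. You identify $B_r(M)$ with the set of mobile vertices whose label lies below a threshold of order $r$. That identification is the BDG relation $d_{\mathrm{gr}}(v,\rho)=\ell(v)-\min\ell+1$ for distances to the \emph{marked point} $\rho$ of a finite pointed map; it says nothing about distances from the root $e_-$, and in the infinite map there is no marked point and no global minimum of the labels (the limit map is non-pointed by Proposition~\ref{p:contPhi}). The paper circumvents this by exploiting the freedom in \cite{kumagai:2008} to choose an arbitrary metric: it introduces the truncated mobile $\vartheta^\star$, whose image $M^\star$ is the UIPTree, works throughout with the metric $\dsub$ of~\eqref{dsub_def_eq}, and compares $d$ with $d^\star$ via Lemma~\ref{l:geosup} in terms of the label fluctuations $\Delta\ell(\Dec_i)$ of the decorations. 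Without some such device your second-moment computation has no correct starting point.

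Second, and more seriously, the resistance lower bound. A Nash--Williams cutset argument requires controlling, for each cutset, the edges of $M$ crossing it, and the danger is precisely the ``long'' edges $uv$ whose endpoints lie in decorations attached far apart on $M^\star$; asserting that the structure is ``series-dominated'' is the conclusion, not an argument. The paper's Lemma~\ref{lem:lres} handles this by projecting each edge $e=uv$ onto the $M^\star$-geodesic from $u^\star$ to $v^\star$, subdividing it into $|e|_\star$ series resistors and shorting, which produces the conductances~\eqref{c_def_eq} and the comparison~\eqref{Reff_comp_eq}; one must then prove that $\E\bigl[\sum_{e\in A_k}|e|_\star^{q}\bigr]$ is bounded, which requires the geodesic counting of Lemma~\ref{num_geod_est_lem} combined with the edge-probability estimate of Lemma~\ref{geod_prob_est_lem}, with the bad decoration $\Dec_0$ treated separately. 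The threshold $\beta>5/2$ emerges exactly here, from the summability requirement $\sum_{n}n^{q}\,n^{-(2\beta-4)}<\infty$ for some $q>0$; it is not a generic concentration condition, and your sketch provides no mechanism that would produce it. Until the contribution of long edges to each cutset is quantified in this way, the bound $\P\bigl(\Reff_M(\uiptroot,B(R;\dsub)^c)<\lambda^{-1}R\bigr)\leq c\lambda^{-q}$ remains unproved.
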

Recall that $M$ is the UIPTree when $\kappa=0$.
It was shown in \cite{fuji:2008} (see also
\cite{barlow:2006,durhuus:2007,kesten:1986}) 
that the spectral dimension of the UIPTree (and even
the more general class of critical Galton--Watson trees with finite
variance conditioned to survive) is almost surely 4/3. Our results in
the case $\kappa = 0$ are therefore in agreement with those
results.  
When $0<\kappa < 1$ the map $M$ is however no longer a tree
but we show that from the point of view of the random walk it is still
tree--like
(see also~\cite{bettinelli} for the phenomenon that 
maps with a unique large face are tree-like). 
This is perhaps not surprising in view of recent results
in \cite{janson:2012sl}, where it is shown, for regular
enough weights, that as $n\to \infty$ the scaling 
limit of the maps, with the
graph metric rescaled by $n^{-1/2}$, 
is a multiple of Aldous' Brownian tree. In this sense, the maps are
globally tree like although they contain a number of small loops, see
Fig.~\ref{f:sim} for an example.  The condition $\beta>5/2$
is probably not optimal but is the best that can
be obtained with our methods.  We suspect that $\beta>1$ suffices.

\begin{figure} [t]
\centerline{\scalebox{0.4}{\includegraphics{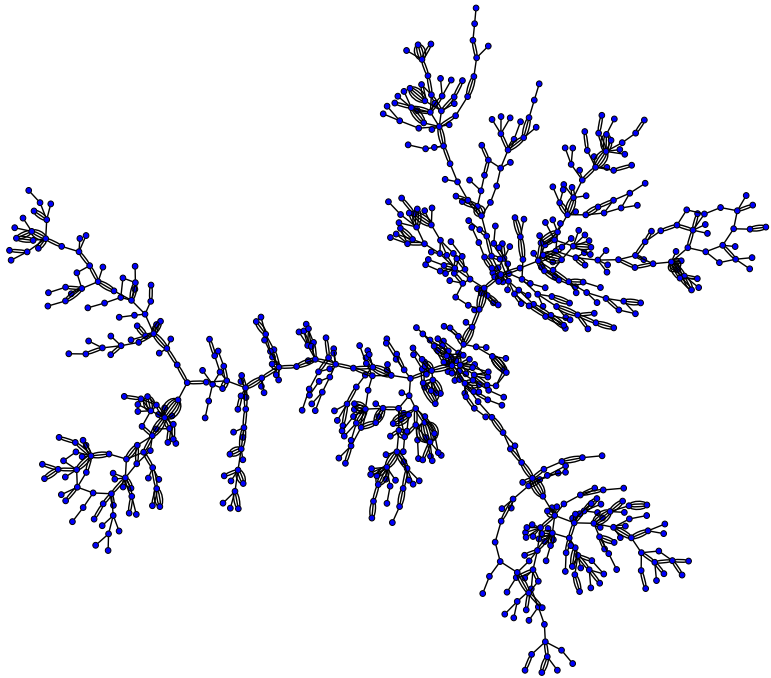}}}
\caption{A simulation of a planar map with $\kappa = 0.66$ and $w_i
  \sim i^{-3}$. The map has 794 vertices and 579 faces. The drawing is non-isometric and non-proper.} \label{f:sim}
\end{figure}

It is worth noting here that the value 4/3 for the spectral dimension
is also encountered in critical percolation clusters of $\mathbb{Z}^d$
for $d$ large enough.  It was conjectured by 
Alexander and Orbach~\cite{alexander:1982} that the spectral 
dimension of the incipient infinite cluster
for percolation on $\ZZ^d$ ($d\geq2$) should be 4/3.  This conjecture
is generally  believed to be true for $d > 6$ 
(but false for $d\leq 6$)
and has been proven for $d \geq 19$ and 
for $d>6$ when the lattice is sufficiently spread out \cite{kozma:2009}.

Finally, Theorem~\ref{th:spec} may be seen as a refinement of
Theorem~\ref{thm:recurrence} (for $\kappa<1$) in the sense that
if a graph $G$ is recurrent, and the spectral dimension $\specdim(G)$
exists, then $\specdim(G)\leq2$.  We do not prove the existence of $\specdim(M)$
other than in the case covered by Theorem~\ref{th:spec}.

\subsection{Outline}
The paper is organized as follows. In Section \ref{s:trees} we
introduce rooted plane trees and mobiles and explain how they may be related
to planar maps via the BDG bijection. In Section~\ref{s:conv} we
prove Theorem~\ref{thm:locallimit} on the
existence and characterization of the local limit,
and in Section~\ref{s:rec} we prove 
Theorem~\ref{thm:recurrence} on recurrence.
Section~\ref{s:rw} is devoted to 
the spectral dimension  in the condensation
phase when $\kappa <1$ (Theorem~\ref{th:spec}). 
In order to improve the readability of the main text
 we collect proofs of some lemmas 
in the Appendix.

\section{Trees and mobiles} \label{s:trees}

The study of planar maps is intimately tied up with the study of
trees, as will be explained in the following sections.  In this
section we introduce our main definitions and tools for studying
trees.  As a `reference' we will use a certain infinite tree
$T_\infty$, whose vertex set is $V(T_\infty) = \bigcup_n \mathbb{Z}^n$ 
i.e.~the set of all finite sequences of
integers.  The tree $T_\infty$ is closely related to the standard
Ul\'am--Harris tree (which has vertex set $\bigcup_n \mathbb{N}^n$),
and is defined as follows.  
Firstly,
the \emph{concatenation} of two elements $u,v \in V(T_\infty)$ is
denoted by $uv$.
The unique vertex in $\mathbb{Z}^0$ (the empty
sequence) is called the \emph{root}
(not to be confused with the root edge of a map)
and is denoted by $\varnothing$. The
edges in $T_\infty$ are defined by connecting every vertex $vi$, $i\in
\mathbb{Z}$, $v\in V(T_\infty)$, to the corresponding vertex $v$. In
this case $v$ is said to be the parent of $vi$ and $vi$ is said to be
the child of $v$. More generally, $v$ is said to be an ancestor of
$v'$ if $v' = vu$ for some $u\in V(T_\infty)$ and in that case $v'$ is
said to be a descendant of $v$. We denote the genealogical relation by
$\prec$ i.e.~$v \prec v'$ if and only if $v$ is an ancestor of
$v'$. The generation of $v$ is defined as the number of elements in
the sequence $v$ or equivalently as the graph distance of $v$ from the
root, and is denoted by $|v|$.

\subsection{Rooted plane trees}\label{ss:pt}
A rooted, plane tree $T$, with vertex set $V(T)$, is defined as a
subtree of $T_\infty$ containing the root $\varnothing$ and having the
following property: For every vertex $v\in V(T)$ there is a number
$\out(v) \in \{0,1,\ldots\}\cup\{\infty\}$, called the \emph{outdegree} of
$v$, such that $vi \in V(T)$ if and only if $\lfloor -\out(v)/2\rfloor
< i \leq \lfloor \out(v)/2\rfloor$, see Fig.~\ref{f:planar}. The
degree of a vertex $v$ is denoted by $\deg(v)$ and defined as $\deg(v)
= \out(v)+1$ if $v\neq \varnothing$ and $\deg(\varnothing) =
\out(\varnothing)$. For each vertex $v$ we order its children from
left to right by declaring that $vi$ is to the left of $vj$ if
\begin {itemize}
\item $i=0$ ($v0$ is the leftmost child) or
\item $ij > 0$ and $i<j$ or
\item $i > 0$ and $j<0$.
 \end {itemize}
Our definition of a rooted plane tree is equivalent to the
conventional definition (see e.g.~\cite{janson:2012sgt}) if the tree is locally
finite, i.e.~$\out(v) < \infty$ for all $v\in V(T)$. However, it
differs slightly if the tree has a vertex $v$ of infinite degree since
in our case $v$ both has a leftmost and a rightmost child whereas
conventionally it would only have a leftmost child.  It will be
important to have this property when describing planar maps in the
so-called condensation phase. 
All trees we consider in this paper will be plane trees and we will from now on simply refer to them as trees.

\begin{figure} [t]
\centerline{\scalebox{1.1}{\includegraphics{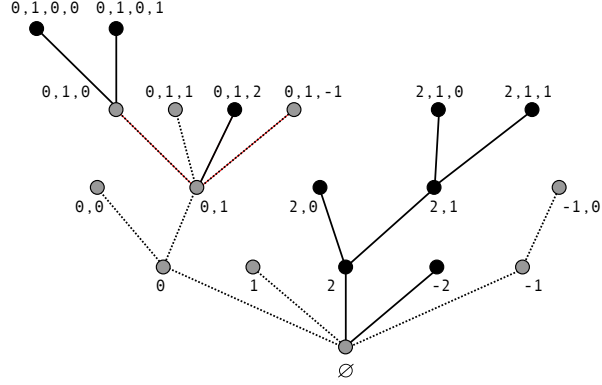}}}
\caption{An example of a plane tree $T$. The subtree $T^{[3]}$ is indicated by dashed edges and gray vertices.} \label{f:planar}
\end{figure}

Denote the set of trees with $n$ edges by $\Gamma_n$ and the set of 
all finite trees by $\Gamma_\mathrm{f}=\bigcup_n \Gamma_n$. 
In this paper we will only consider infinite trees $T$ which 
have either of the two properties:
\begin {enumerate}
\item $T$ is locally finite and there is exactly one infinite self-avoiding path starting at the root 
called an {\it infinite spine};  or
\item Exactly one vertex in $T$ has infinite degree and $T$ contains no infinite spines.  
The unique self-avoiding path from the root to 
the vertex of infinite degree is in this case called a {\it finite spine}.   
\end {enumerate}
Denote the set of such infinite trees by $\Gamma_\infty$ and let 
$\Gamma = \Gamma_\mathrm{f}\cup\Gamma_\infty$. 
A tree satisfying (1) or (2) can 
be embedded in the plane in such a way that all vertices are isolated
points, no edges cross and so that the ordering of its vertices is
preserved. When we refer to embeddings of trees later on we
will always assume that these properties hold.  

When an infinite tree
has a spine (finite or infinite, as above) 
we will denote the sequence of vertices on the spine, 
ordered by increasing distance from the root, by 
$\varnothing = S_0,S_1,\ldots$. When there is a vertex of 
infinite degree we will denote it by $\iv$ and we will denote its 
children by $\iv_i$, $i\in \mathbb{Z}$ orderered from left to right
in the  same way as before.

One may define a metric on $\Gamma$ in much the same way as
we did for $\Ma$, as follows.
For every $R\geq 0$ define the set 
\begin {equation}
V^{[R]} = \bigcup_{n=0}^R \{\lfloor -R/2\rfloor+1,\lfloor -R/2\rfloor+2,\ldots,\lfloor R/2\rfloor-1,\lfloor R/2\rfloor\}^n
\end {equation}
and for $T\in\Gamma$ let $T^{[R]}$ be the finite subtree of $T$ with vertex set $V(T)\cap V^{[R]}$, see Fig.~\ref{f:planar}. Define the metric
\begin {equation}
 d_\Gamma(T_1,T_2) =  \left(1+\sup\left\{R~:~T_1^{[R]}=T_2^{[R]}\right\}\right)^{-1},\qquad T_1,T_2\in\Gamma. 
\end {equation}
The set $\Gamma$ is equipped with the Borel $\sigma$-algebra generated by $d_\Gamma$.

\subsection{Mobiles} \label{ss:mobiles} 
It will be convenient to emphasise the 
distinction between vertices in a tree that belong to odd and
even generations, respectively.
For each tree $T\in\Gamma$ we therefore colour the root and vertices in every
even generation \emph{white} and we colour vertices in every odd generation 
\emph{black}.   The set of black (resp.~white) vertices in the tree $T$
will be denoted by $V^\bullet(T)$ (resp.~$V^\circ(T)$).
 Let $\Gamma_\infty^\odot$ be the subset of
$\Gamma_\infty$
where only black vertices can have infinite degree and define
$\Gamma^\odot = \Gamma_\mathrm{f}\cup\Gamma_\infty^\odot$.

For a finite tree $T \in \Gamma_n$, define the \emph{left contour 
sequence} $(c_i^{(L)})_{i\geq 0}$ of vertices in $T$ as follows:
\begin {itemize}
 \item $c_0^{(L)} = \varnothing$,
 \item For each $j < 2n$, the element in  $(c_i^{(L)})_{i\geq 0}$ 
following $c_j^{(L)}$ is the leftmost child of $c_j^{(L)}$ 
which has still not appeared in the sequence or if all 
its children have appeared it is the parent of $c_j^{(L)}$.
 \item The sequence is extended to $i> 2n$ by $2n$ periodicity.
\end {itemize}
 Similarly define the \emph{right contour sequence} $(c_i^{(R)})_{i\geq 0}$ 
by replacing {\it leftmost} with {\it rightmost} in the above 
definition. Next, define the contour 
sequence $(c_i)_{i\in\mathbb{Z}}$ by
\begin {equation} \label{contourseq}
 c_i = \left\{\begin {array} {ll}
c_i^{(L)} & \text{if}~i \geq 0 \\
c_{-i}^{(R)} & \text{if}~i < 0.
\end {array}\right.
\end {equation}
We will refer to each occurrence of a vertex $v$ in the contour 
sequence as a \emph{corner} of $v$.  Note that $v$ has $\deg(v)$ number 
of corners.  We extend the above definitions to elements 
in $\Gamma_\infty$ in the
obvious way (there is only one infinite period);  this is possible due
to how the infinite trees are constructed and how the children of the
vertex of infinite degree are ordered.  
Note that for a tree $T$ in $\Gamma$, the contour
sequence visits all vertices.
We will sometimes use the
term \emph{clockwise} (respectively, \emph{counterclockwise})
contour sequence, which refers to progressing through the
contour sequence $c_i$ by increasing (respectively, decreasing) 
the index $i$.

Define the white contour
sequence $(c^\circ_i)_{i\in\mathbb{Z}}$ by $c^\circ_i = c_{2i}$ for
all $i\in\mathbb{Z}$. Note that every white vertex appears in this
sequence. Similarly, for a tree with a
(finite or infinite) spine let $(S^\circ_i)_{i\geq0}$ be 
a sequence of the white vertices on the spine defined by
$S^\circ_i = S_{2i}$.

For trees $T\in\Gamma^\odot$, we will consider integer labels 
$(\ell(v))_{v\in V^\circ(T)}$ assigned to the white vertices of 
$T$, and which obey the following rules.
\begin {enumerate}
\item \label{e:labelinc} For all $i\in\mathbb{Z}$,
  $\ell(c^\circ_{i+1}) \geq \ell(c^\circ_{i})-1$ (for every black
  vertex $u$, the labels of the white vertices adjacent to $u$ can
  decrease by at most one in the clockwise order around $u$).
\item If $T$ has an infinite spine then $\inf_{i\geq 0}\ell(S^\circ_i) =
  -\infty$.
\item \label{it:infdegree} If $T$ has a vertex of infinite degree then\\ $\inf_{i\geq
  0}\ell(\iv_i) = \inf_{i<0}\ell(\iv_i) = -\infty$.
\end {enumerate}
A tree $T$ along with the labels $\ell$ which obey the above rules is
called a \emph{mobile} and will typically be denoted by $\theta =
(T,\ell)$.  If the root
has label $k\in\mathbb{Z}$, the set of such mobiles with $n$ edges will be denoted
by $\Theta^{(k)}_n$, the set of finite mobiles by $\Theta^{(k)}_\mathrm{f}$,
the set of infinite mobiles obtained by labeling trees in
$\Gamma_\infty^\odot$ by $\Theta^{(k)}_\infty$ and finally
$\Theta^{(k)} := \Theta^{(k)}_\mathrm{f}\cup \Theta^{(k)}_\infty$.
As explained in the next subsection, mobiles are an essential tool
in the study of planar maps.

For a finite tree $T$ there is a useful alternative way of describing 
rule (\ref{e:labelinc}) for the labels on $T$, 
see e.g.~\cite{legall:2011}. 
For this purpose we introduce, for each $r\geq 1$, the set
\begin {equation}\label{er_eq}
 E_r = \Big\{(x_1,x_2,\ldots,x_r)\in\{-1,0,1,2,\ldots\}^r :
 \sum_{i=1}^r x_i = 0\Big\}.
\end {equation}
Let $u$ be a black vertex in $T$ of degree $r$, and denote its white parent
by $u^{(0)}$ and its white children by
$u^{(1)},u^{(2)},\ldots,u^{(r-1)}$, ordered from left to right. 
Assign to $u$ an element $(x_1(u),\ldots,x_r(u))$ from $E_r$. 
Having done this for all black vertices $u$, label the white
vertices of $T$ recursively as follows.   First label the root by some fixed
$k$. If for a black vertex $u$ we have that $\ell(u^{(0)}) = y_0$
then let
\begin {equation} \label{labels}
\ell(u^{(j)}) = y_0+\sum_{i=1}^j x_i(u), \qquad 1\leq j \leq r-1.  
\end {equation} 
The elements from $E_r$ thus provide the
increments of the labels of the white vertices clockwise around each
black vertex.   Note that the minimum allowed increment is $-1$, in
accordance with rule (\ref{e:labelinc}). 

The finite sequence $(\ell(u^{(j)}))_{0\leq j \leq r}$ is called a 
\emph{discrete bridge} of length $r$. From this description it is
easy to count the number $\lambda(T)$ 
of different allowed labellings of the
finite tree $T$.  By a standard `balls-and-boxes' argument,
the number of elements in $E_r$ is
$\binom{2r-1}{r-1}$. Therefore, the number of ways of labeling $T$,
given that its root has a fixed label, is
\begin {equation} \label{lambda}
 \lambda(T) = \prod_{u\in V^\bullet(T)} \binom{2 \deg(u)-1}{\deg(u)-1}.
\end {equation}

We conclude this subsection by defining a metric also on the set
$\Theta^{(0)}$. For a mobile $\theta = (T,\ell)$, let $\theta^{[R]}$
be the labeled tree consisting of $T^{[R]}$ and the labels $\ell$
restricted to the white vertices in $T^{[R]}$. Note that
$\theta^{[R]}$ is in general not a mobile since the labels do not
necessarily satisfy the rules listed above. We define a metric
$d_\Theta$ on $\Theta^{(0)}$ by
\begin {equation}\label{theta_metric_eq}
 d_\Theta(\theta_1,\theta_2) =
 \left(1+\sup\left\{R~:~\theta_1^{[R]}=\theta_2^{[R]}\right\}\right)^{-1},\qquad
 \theta_1,\theta_2\in\Theta^{(0)}
\end {equation}
and we equip $\Theta^{(0)}$ with the Borel $\sigma$-algebra.

\subsection{The Bouttier--Di Francesco--Guitter bijection} \label{ss:bdg}
We will recall the rooted and pointed version of the Bouttier-Di
Francesco-Guitter (BDG) bijection 
between mobiles and planar maps
\cite{bouttier:2004}. Consider a finite mobile $\theta = (T,\ell) \in
\Theta_n^{(0)}$ and embed $T$ in the plane. Let
$(c^\circ_i)_{i\in\mathbb{Z}}$ be its white contour sequence and for
each $i$ define the successor of $i$ as
\begin {equation} \label{eq:succ}
 \sigma(i) = \inf\{j>i~:~\ell(c^{\circ}_j) = \ell(c^{\circ}_i)-1\}
\end {equation}
with the convention that $\inf\{\emptyset\}=\infty$. Add a point
$\rho$ to the complement of $T$ in the plane and define
$c^{\circ}_\infty = \rho$. Define the successor of a white vertex
$c^{\circ}_i$ as
\begin {equation} \label{succv}
 \sigma(c^{\circ}_i) = c^{\circ}_{\sigma(i)}.
\end {equation}
Note that every white vertex in the mobile has a unique successor. A
planar map $\map\in \Ma_n$ is constructed from $\theta$, along with a
variable $\epsilon\in\{-1,1\}$, as follows: Draw an arc from each
corner of a white vertex in $\theta$ to its 
successor (in such a way that no arcs
cross). Then delete all black vertices and edges belonging to
$\theta$. The white vertices of $\theta$ along with the external point
$\rho$ are the vertices of $\map$ and $\rho$ takes the role of the
marked vertex. 
The arcs between white vertices take the role of the edges of $\map$.
The root edge of $\map$ is defined as the arc from
$c^{\circ}_0$ to $\sigma(c^{\circ}_0)$ and its direction is determined
by the value of $\epsilon$. If $\epsilon = 1$ ($\epsilon = -1$) it is
directed towards (away from) $c^{\circ}_0$. 

The faces in $\map$
correspond to the black vertices in $\theta$, the degree of a face
being twice the degree of the corresponding black vertex. Furthermore,
the labels of the vertices in $\map$ inherited from the labels in
$\theta$ carry information on distances to the marked point $\rho$.
Namely, if $d_\text{gr}$ is the graph distance on $\map$ and 
$v\neq\rho$ 
is a vertex in $\map$ then
\begin {equation}
 d_\text{gr}(v,\rho) = \ell(v)-\min\{\ell(u)~:~u\in V(\map)\}+1.
\end {equation}

The above construction defines a mapping $\Phi:
\Theta^{(0)}_\mathrm{f}\times\{-1,1\} \rightarrow \Ma_\mathrm{f}$ which is a
bijection. For the inverse construction of $\Phi$, see \cite{bouttier:2004}. 
The mapping $\Phi$
can be extended to infinite elements in $\Theta^{(0)}$ by a
similar description, as follows. If $\theta = (T,\ell)$ is an infinite
mobile we embed $T$ in the plane such that its vertices are
isolated points, as described in Section~\ref{ss:pt}.
Recall that if $T$ has a spine then
$\inf_{i\geq 0}\ell(S^\circ_i)=-\infty$ and if it has a vertex of
infinite degree then $\inf_{i\geq 0}\ell(\iv_i) =
-\infty$. Therefore, every white vertex in the mobile still has a
unique successor which is also a white vertex in the mobile. 
(The other condition, $\inf_{i< 0}\ell(\iv_i) =-\infty$,
ensures that the resulting embedded graph is locally finite.)
The construction of the arcs and the root edge is the same as before and
due to the fact that every successor is contained in the mobile, no
external vertex $\rho$ is needed. The resulting embedded graph, which
we call $\Phi(\theta,\epsilon)$, is thus rooted but not pointed.

In the following proposition we give 
$\Theta^{(0)}$ the topology of~\eqref{theta_metric_eq}, and
$\{-1,1\}$ the discrete topology.
The set $\Theta^{(0)}\times \{-1,1\}$ is given the product
topology.

\begin {proposition}  \label{p:contPhi}\hspace{1cm}
\begin{enumerate}
\item If $\theta\in\Theta^{(0)}_\infty$ then 
$\Phi(\theta,\epsilon)\in\Ma_\infty$.  Thus $\Phi$
extends to a function 
$\Theta^{(0)}\times\{-1,1\}\rightarrow\Ma$.
\item If $\theta\in\Theta^{(0)}_\infty$ then 
$\Phi(\theta,\epsilon)$ is non-pointed and 
one-ended.  It has a unique face of infinite degree
if and only if $\theta$ has a vertex of infinite degree.
\item The function
$\Phi:\Theta^{(0)}\times\{-1,1\}\rightarrow\Ma$
is continuous.
\end{enumerate}
\end {proposition}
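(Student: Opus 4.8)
The plan is to treat the three parts in the order stated, extracting along the way the one technical estimate that underlies all of them: that a graph-metric ball $B_r$ in $\Phi(\theta,\epsilon)$ is determined by, and depends locally constantly on, a truncation $\theta^{[R]}$ of the mobile once $R$ is large enough. For part (1) I would first check that the arc construction is well posed for infinite $\theta$: every white vertex has a unique successor (guaranteed by the infimum rules on $\ell$), and the arcs are drawn without crossings exactly as in the finite case, so $\Phi(\theta,\epsilon)$ is an embedded rooted graph. The essential point is \emph{local finiteness}. Fix a white vertex $v$; the number of arcs leaving $v$ equals its number of corners, namely $\deg(v)$, which is finite since in $\Gamma_\infty^\odot$ only black vertices may have infinite degree. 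The arcs entering $v$ correspond to corners $c$ with $\sigma(c)=v$, which satisfy $\ell(c)=\ell(v)+1$ with no corner of label $\le\ell(v)$ between $c$ and $v$; the condition $\inf_{i<0}\ell(\iv_i)=-\infty$ (and the analogous spine condition) forces only finitely many such $c$. Hence every vertex has finite degree. As an infinite tree in $\Gamma_\infty^\odot$ has infinitely many white vertices, $\Phi(\theta,\epsilon)$ is an infinite, connected (connectivity is inherited from the mobile as in the finite construction), locally finite rooted map; its balls $B_r$ are then finite planar maps and $\Phi(\theta,\epsilon)$ is the local limit of $\bigl(B_i(\Phi(\theta,\epsilon))\bigr)_i$, so $\Phi(\theta,\epsilon)\in\Ma_\infty$.

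For part (2), non-pointedness is immediate: since every successor lies in the mobile, no external vertex $\rho$ is adjoined. The statement about faces is the purely local part of the BDG correspondence and is insensitive to finiteness: each black vertex $u$ yields a face of degree $2\deg(u)$, distinct black vertices give distinct faces, and $\theta\in\Gamma_\infty^\odot$ has at most one vertex of infinite degree, necessarily black; hence the map has a (unique) face of infinite degree exactly when $\theta$ has its vertex of infinite degree. For one-endedness I would remove a finite connected subgraph $K$; by the key estimate $K$ lies in the image of some truncation $\theta^{[R]}$, so it suffices to see that the remainder of the map has a single infinite component. The mobile has exactly one infinite feature --- the infinite spine in case (1) or the infinite-degree vertex $\iv$ in case (2) --- and the successor arcs organize the associated unbounded region (the vertices along the spine, respectively the boundary of the infinite face) into one infinite connected component, using that the relevant label infima are $-\infty$; this gives one-endedness.

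The heart of the matter is part (3), continuity, which I would deduce from the key estimate: for each $r$ there is $R=R(r,\theta)$ such that $(\theta')^{[R]}=\theta^{[R]}$ implies $B_r(\Phi(\theta',\epsilon))=B_r(\Phi(\theta,\epsilon))$. Continuity then follows at once, since $\{-1,1\}$ is discrete and $d_\Theta(\theta_n,\theta)\to0$ forces $\theta_n^{[R]}=\theta^{[R]}$ eventually. To prove the estimate I would explore $B_r$ by following at most $r$ arcs out of the root corner $c^\circ_0$; because an arc joins a corner to its successor, whose label is one smaller, each edge changes a vertex's label by exactly $\pm1$, so every vertex of $B_r$ has label in $[-r,r]$. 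The real difficulty --- and the main obstacle --- is that arcs may join corners lying far apart in the contour, so a bound on labels does not by itself localize $B_r$ in the tree; one must bound the contour reach of a depth-$r$ exploration. I expect to control this using the monotonicity of the successor map together with the fact that the label infima along the spine, and on both sides of $\iv$, are $-\infty$: this confines the corners reachable in $r$ successor and predecessor steps to a region that stabilizes once $R$ is large, yielding the required locally constant dependence and hence continuity.
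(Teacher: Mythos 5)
Your proposal is correct and follows essentially the same route as the paper, which itself only sketches the proof by deferring to Curien, M\'enard and Miermont and pointing out the role of the conditions $\inf_{i\geq 0}\ell(\iv_i)=\inf_{i<0}\ell(\iv_i)=-\infty$ in the infinite-degree case --- exactly the conditions you invoke both for local finiteness and for localizing $B_r$. The one step you leave as an expectation (confining a depth-$r$ ball to a finite contour segment) closes precisely as you indicate: every vertex of $B_r$ has label in $[-r,r]$, the infimum conditions produce on each side of the root a corner of label below $-r-2$ at finite contour distance, and since successor arcs do not cross, no corner beyond those two low points can be joined by an arc to a vertex of $B_r$, so $B_r(\Phi(\theta,\epsilon))$ is determined by $\theta^{[R]}$ once $R$ is large enough that $\theta^{[R]}$ contains the contour segment between them.
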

\begin {proof}
Let $\theta = (T,\ell)$ be a mobile in $\Theta_\infty^{(0)}$. In the
case when $T$ has a unique infinite spine the proof is nearly
identical to that of~\cite[Proposition~2]{curien:2012a}, which deals
with quadrangulations and labelled trees.  The difference in the case
when $T$ has a unique vertex of infinite degree is first of all that
the left and right contour sequences are independent. Here we need to
use the condition that $\inf_{i\geq 0}\ell(\iv_i) =
\inf_{i<0}\ell(\iv_i) = -\infty$ cf.~Section \ref{ss:mobiles}. Using
this the proof of (1) and (3) proceeds in the same way as in
\cite{curien:2012a}. Secondly, when $T$ has a unique vertex of
infinite degree it is not one--ended in the usual sense. However, for
each $R\geq 0$ the complement of the truncated tree $T^{[R]}$ has
exactly one infinite connected component and this property along with
how the edges in the corresponding map are constructed from $\theta$,
guarantees that $\Phi(\theta,\epsilon)$ is one-ended. We leave the
details to the reader.
\end {proof}

\subsection{Random mobiles}
In this subsection we define a sequence $(\tilde{\mu}_n)_{n\geq 1}$ of
probability measures on $\Theta^{(0)}\times\{-1,1\}$ which
corresponds, via $\Phi$, to the sequence $(\mu_n)_{n\geq 1}$ on $\Ma$.
We start by defining a sequence of probability measures
$(\tilde{\nu}_n)_{n\geq 1}$ on the set of trees $\Gamma^\odot$ which
we then relate to $(\tilde{\mu}_n)_{n \geq 1}$.

Let $(w_i)_{i\geq 0}$ be as in~\eqref{wqdef} 
and assign to each finite tree $T\in\Gamma_\mathrm{f}$ a weight
\begin {equation}
\tilde{W}(T)= \prod_{v\in V^\bullet(T)}w_{\deg(v)}
\end {equation}
and define
\begin {equation} \label{tildenu}
 \tilde{\nu}_n(T) =
 \frac{\tilde{W}(T)}{\sum_{T\in\Gamma_n}\tilde{W}(T)},
\mbox{ if } T\in \Gamma_n\mbox{ (0 otherwise).} 
\end {equation}
Recall that $\lambda(T)$, defined in (\ref{lambda}), denotes the
number of ways one can assign labels to the white vertices of a finite
tree $T$. For each $((T,\ell),\epsilon)\in \Theta^{(0)}\times\{-1,1\}$
and each $n\geq 1$, define
\begin {equation} \label{tildemu}
 \tilde{\mu}_n ((T,\ell),\epsilon) = \tilde{\nu}_n(T) / (2\lambda(T)).
\end {equation}
The following result is then 
well-known~\cite{marckert:2007}.

\begin {lemma}\label{lem:mu}
 For each $n\geq 1$, the measure $\mu_n$ is the image of
 $\tilde{\mu}_n$ by the mapping $\Phi$.
\end {lemma}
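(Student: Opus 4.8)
The plan is to exploit that $\Phi$ restricts to a bijection between the finite sets $\Theta^{(0)}_n\times\{-1,1\}$ and $\Ma_n$ --- indeed the construction in Section~\ref{ss:bdg} sends a mobile with $n$ edges to a map with $n$ edges --- so that the image measure $\Phi_*\tilde\mu_n$ is well defined and satisfies $(\Phi_*\tilde\mu_n)(\map) = \tilde\mu_n(\Phi^{-1}(\map))$ for each $\map\in\Ma_n$, where $\Phi^{-1}(\map) = ((T,\ell),\epsilon)$ is the unique preimage. It therefore suffices to verify the pointwise identity $\tilde\mu_n(\Phi^{-1}(\map)) = \mu_n(\map)$ for every $\map\in\Ma_n$.

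First I would rewrite the map weight $W(\map)$ in terms of the underlying tree $T$. By the BDG correspondence recalled above, the faces of $\map$ are in bijection with the black vertices of $T$, a face $f$ having degree $2\deg(u)$ where $u$ is the corresponding black vertex. Hence
\[
 W(\map) = \prod_{f\in F(\map)} q_{\deg(f)/2} = \prod_{u\in V^\bullet(T)} q_{\deg(u)},
\]
a quantity depending only on $T$. Combining this with the definitions $w_i = \binom{2i-1}{i-1}q_i$ from~\eqref{wqdef} and $\lambda(T) = \prod_{u\in V^\bullet(T)}\binom{2\deg(u)-1}{\deg(u)-1}$ from~\eqref{lambda} yields the key identity
\[
 \tilde W(T) = \prod_{u\in V^\bullet(T)} w_{\deg(u)} = \lambda(T)\prod_{u\in V^\bullet(T)} q_{\deg(u)} = \lambda(T)\, W(\map).
\]
Substituting into the definition~\eqref{tildemu} of $\tilde\mu_n$ gives $\tilde\mu_n(\Phi^{-1}(\map)) = \tilde\nu_n(T)/(2\lambda(T)) = W(\map)\big/\bigl(2\sum_{T'\in\Gamma_n}\tilde W(T')\bigr)$.

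It then remains to match the two normalising constants, i.e.\ to show $\sum_{\map'\in\Ma_n} W(\map') = 2\sum_{T'\in\Gamma_n}\tilde W(T')$. This I would obtain by reorganising the left-hand sum along the bijection $\Phi$: grouping maps according to their underlying tree $T'$, each $T'\in\Gamma_n$ accounts for exactly $2\lambda(T')$ preimages (the $\lambda(T')$ admissible labellings with root label $0$, times the two choices of $\epsilon$), and every such preimage maps to a map of weight $W = \tilde W(T')/\lambda(T')$ by the identity above. Summing gives $\sum_{\map'} W(\map') = \sum_{T'\in\Gamma_n} 2\lambda(T')\cdot \tilde W(T')/\lambda(T') = 2\sum_{T'\in\Gamma_n}\tilde W(T')$, as required, whence $(\Phi_*\tilde\mu_n)(\map) = W(\map)/\sum_{\map'}W(\map') = \mu_n(\map)$.

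The argument is essentially combinatorial bookkeeping, so there is no genuinely hard analytic step; the one point that must be handled with care --- and which is really the content of the statement --- is the cancellation between the binomial factor $\binom{2i-1}{i-1}$ built into the tree weights $w_i$ and the labelling count $\lambda(T)$ appearing in~\eqref{tildemu}. This is precisely why the weights $w_i$ were defined with that binomial prefactor in~\eqref{wqdef}: it is chosen so that $\tilde W(T)/\lambda(T)$ reduces to the product of face weights $q_{\deg(u)}$. I would also double-check the two structural inputs borrowed from~\cite{bouttier:2004}: that $\Phi$ preserves the number of edges, so that it restricts to a bijection on each $\Ma_n$, and that the degree of a face of $\map$ equals twice the degree of the corresponding black vertex of $T$.
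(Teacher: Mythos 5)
Your argument is correct: the cancellation $\tilde W(T)=\lambda(T)\,W(\map)$ together with the bijectivity of $\Phi$ (with exactly $2\lambda(T)$ preimages over each tree $T$) is precisely the bookkeeping that makes the lemma true, and it is indeed the reason for the binomial prefactor in the definition of $w_i$. The paper itself gives no proof — it cites the result as well-known from Marckert and Miermont — so your write-up simply supplies the standard computation that the paper omits, and it is sound.
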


Note from (\ref{tildemu}) that a random element $((T,\ell),\epsilon)
\in\Theta^{(0)}\times\{-1,1\}$ distributed by the measure
$\tilde{\mu}_n$ can be constructed by:
\begin {enumerate}
 \item Selecting a tree $T$ according to the measure $\tilde{\nu}_n$.
\item Given the tree $T$, labeling its root by $0$ and 
\begin {enumerate}
\item [(a)] assigning a labeling $\ell$ to the white 
vertices of $T$ uniformly from the set of allowed labelings;  
or equivalently
\item [(b)] for every $r\geq 1$ assigning independent uniform 
elements from $E_r$ to each black vertex of degree $r$ and
defining $\ell$ recursively as described in and above (\ref{labels}).
\end {enumerate}
\item Selecting independently an element $\epsilon$ uniformly from $\{-1,1\}$.
\end {enumerate}
Note that the only `part' of the measure $(\tilde{\mu}_n)_{n\geq 1}$
which depends on the parameters  $(w_i)_{i\geq 0}$ of the model is the
`tree part' $(\tilde{\nu}_n)_{n\geq 1}$. We will therefore first focus our
attention on the latter.

Also note, for future reference, that step (2b) above has the following
alternative description.  Let $X_1,X_2,\dotsc$ be independent,
all with the same distribution given by
\begin {equation}\label{bridgelaw}
 \mathbb{P}(X_j=k) = 2^{-k-2}, \quad k = -1,0,1,2,\ldots.
\end {equation}
A uniformly chosen element of $E_r$ has the same distribution
as the sequence $(X_1,X_2,\dotsc,X_r)$ \emph{conditioned on}
$\sum_{j=1}^r X_j=0$.

\section{The local limit}\label{s:conv}

This section is devoted to the proof of Theorem~\ref{thm:locallimit}.
We start by describing the weak limit of the \emph{unlabelled}
mobiles, that is the sequence $(\tilde\nu_n)_{n\geq1}$, 
see Theorem~\ref{th:tildenu}.
We then describe in Section~\ref{ss:labelledmobiles} how to `put the labels back on',
and this gives us a proof of Theorem~\ref{thm:locallimit}.  
The proof of Theorem~\ref{th:tildenu} in turn relies
on the theory of simply generated trees, which is described in
Section~\ref{s:sgt}.  The proof of Theorem~\ref{th:tildenu}
is given in Section~\ref{ss:tildenuproof}.

\subsection{Weak convergence of unlabelled mobiles} \label{ss:tildenu}

In this subsection we state a convergence theorem for the measures
$\tilde{\nu}_n$ which we prove in  Section~\ref{ss:tildenuproof}. 
Recall that vertices in odd generations are coloured
black and even generations white. 
Recall also the definition of $(\pi_i)_{i\geq 0}$ 
and $\xi$ from (\ref{pi}).

Let $\xi^\circ$ and $\xi^\bullet$ be random 
variables in $\{0,1,2\ldots\}$ with distributions given by
\begin {equation}
  \P(\xi^\circ= i) = \pi_0 (1-\pi_0)^i, \quad i\geq 0
\end {equation}
and (when $\pi_0 < 1$)
\begin {equation}
  \P(\xi^\bullet= i) = \pi_{i+1}/(1-\pi_0), \quad i\geq 0.
\end {equation}
(These appear in~\cite[Proposition~7]{marckert:2007}, 
where the law of $\xi^\circ$ is denoted $\mu_0$ 
and the law of $\xi^\bullet$ is denoted $\mu_1$.)
Also, let $\hat{\xi}^\circ$ and $\hat{\xi}^\bullet$ be random 
variables in $\{1,2,\ldots\}\cup \{\infty\}$ having distributions
given by
\begin{equation}
 \P(\hat{\xi}^\circ= i) = \pi_0^2 i (1-\pi_0)^{i-1}, \quad i \geq 1
\end{equation}
and 
\begin {equation}
  \P(\hat{\xi}^\bullet = i) = \left\{
\begin {array}{cc}
i\pi_{i+1}/\pi_0 & \text{if}~1\leq i<\infty \\
(1-\kappa)/\pi_0 & \text{if}~i=\infty.
\end {array}\right.
\end {equation}
Thus $\hat{\xi}^\circ$ is the sized-biased version
of ${\xi}^\circ$, and similarly for
$\hat{\xi}^\bullet$ in the case $\kappa=1$.
We now define a probability measure $\tilde{\nu}$ 
on infinite trees, by describing a random tree 
$\tilde{\T}$ with law
$\tilde{\nu}$.  
We let $\tilde{\T}$ be a modified multi-type Galton--Watson
tree having four types of vertices: 
\emph{normal} black and white vertices
and \emph{special} black and white vertices. 
The root is white and is declared to be
a special white vertex.  Vertices have offspring independently
according to the following description. Special white vertices give
birth to black vertices, their number having the law of 
$\hat{\xi}^\circ$;  one of the black children is chosen 
uniformly at random to be
special and the rest are declared normal. 
Special black vertices give birth to
white vertices, their number having 
the law of  $\hat{\xi}^\bullet$. If the
number of white children is finite, one of them is chosen uniformly to
be declared special and the rest  normal. 
If the number of white children is
infinite, all of them are declared to be normal.  Normal white
vertices give birth to normal black vertices, 
their number having the law of
$\xi^\circ$, and normal black vertices give birth to normal white
vertices, their number having the law of~$\xi^\bullet$.

We will now describe how a typical tree $\tilde{\T}$ looks like
depending on the parameters $(w_i)_i$. Define 
\begin {equation}
\tilde{\kappa} = (\kappa+\pi_0-1)/\pi_0
\end {equation}
and note that $\tilde{\kappa} \leq 1$, and
that $\tilde{\kappa} < 1$ if and only if $\kappa < 1$. First of all,
the special vertices define a spine which is 
infinite if and only if $\tilde{\kappa} = 1$.
If $\tilde{\kappa} < 1$ the
spine ends with a black vertex of infinite degree, which has only
normal white children. In that case its
length $\tilde{L}$ (number of edges) has a 
geometric distribution:
\begin {equation} \label{mobilelength}
 \mathbb{P}(\tilde{L} = 2n+1) = (1-\tilde{\kappa}) \tilde{\kappa}^n,
 \quad n\geq 0.
\end {equation}
The normal children of the vertices on the spine are root vertices of
independent two-type Galton--Walton processes where white (resp.~black)
vertices have offspring distributed as $\xi^\circ$
(resp.~$\xi^\bullet$). We will call these Galton--Watson processes
\emph{outgrowths} from the spine. 
If $\pi_0 < 1$ the mean number of offspring in two
consecutive generations in an outgrowth is given by
\begin {equation}
 \mathbb{E}(\xi^\circ)\mathbb{E}(\xi^\bullet) = \frac{1-\pi_0}{\pi_0}
 \frac{\kappa-1 + \pi_0}{1-\pi_0} = \tilde{\kappa}.
\end {equation}
Thus the outgrowths are critical if $\tilde{\kappa} = 1$ and
sub-critical otherwise. In both cases they are almost surely finite
and therefore the tree $\tilde{\T}$ is at most one ended.

To summarize, we have the two following qualitatively different
cases. In the case $\tilde{\kappa}=1$ the tree $\tilde{\T}$ has an
infinite spine consisting of special white and black vertices. The
outgrowths from the spine are independent critical two-type
Galton--Watson processes as described above. In the case
$\tilde{\kappa}<1$ the tree $\tilde{\T}$ has a finite spine with 
geometrically distributed length~\eqref{mobilelength}. The spine
consists of special black and white vertices and has outgrowths, 
which are independent, sub-critical two-type Galton--Watson 
processes. In the extreme case $\tilde{\kappa}=0$ we have 
$\pi_0 = 1$ and thus $\kappa =0$. 
In this case $\tilde{\T}$ is deterministic and consists of 
a white root having a single black vertex of infinite degree 
and all outgrowths empty.

We have the following.

\begin {theorem} \label{th:tildenu}
 The sequence of measures $(\tilde{\nu}_n)_{n\geq 1}$ on
 $\Gamma^\odot$ converges weakly to $\tilde{\nu}$ (the law of
 $\tilde{\T}$) as $n\rightarrow\infty$ in the topology generated by
 $d_\Gamma$.
\end {theorem}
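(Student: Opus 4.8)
The plan is to transport the whole statement through the bijection with simply generated trees constructed in Section~\ref{s:sgt} and then to apply Janson's local limit theorem (Theorem~\ref{th:janson}). Write $\phi$ for the map of that section, sending a two-type tree $T\in\Gamma^\odot$ to a one-type plane tree $\phi(T)$. It is a bijection under which $\tilde W(T)$ equals the simply generated weight $\prod_{u\in V(\phi(T))}w_{\out(u)}$ of $\phi(T)$ (the degrees of the black vertices of $T$ matching the out-degrees of $\phi(T)$, with $w_0=1$ absorbing the white vertices), and under which the size is preserved in the sense made precise in Section~\ref{s:sgt}. Consequently $\phi$ pushes $\tilde\nu_n$ forward to the measure $\mathsf P_n$ on simply generated trees of the appropriate size with weight sequence $(w_i)_{i\ge0}$. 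It therefore suffices to show that $\mathsf P_n$ converges weakly to a limit $\hat{\mathsf P}$ and that $\phi^{-1}$ carries $\hat{\mathsf P}$ to $\tilde\nu$.

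For the first half I would invoke Theorem~\ref{th:janson} with the parameters $\tau$, $(\pi_i)$ and $\xi$ from~\eqref{pi}: the trees $\mathsf P_n$ converge weakly, in the local topology, to a modified Galton--Watson tree $\hat{\mathcal T}$ with law $\hat{\mathsf P}$. When $\kappa=1$ this is Kesten's sized-biased tree, with an infinite spine of special vertices carrying the sized-biased offspring law; when $\kappa<1$ it has a finite spine ending in a single vertex of infinite degree. The identification $\phi^{-1}(\hat{\mathcal T})\eqd\tilde{\T}$ then follows by matching the branching descriptions: under $\phi^{-1}$ the alternation of colours is restored, the spine of $\hat{\mathcal T}$ becomes the spine of special white and black vertices, and the normal subtrees become the two-type outgrowths with offspring $\xi^\circ,\xi^\bullet$. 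Here the offspring and spine computations already recorded above---in particular $\mathbb E(\xi^\circ)\mathbb E(\xi^\bullet)=\tilde\kappa$ and the geometric spine length~\eqref{mobilelength}---are exactly what is needed to check that the sized-biased/condensation data of $\hat{\mathcal T}$ transform into the laws $\hat\xi^\circ,\hat\xi^\bullet$, the infinite-degree vertex in the case $\kappa<1$ being black, consistently with $\Gamma^\odot$.

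The remaining step, and the one I expect to be the crux, is to pass the weak convergence of $\mathsf P_n$ through $\phi^{-1}$ by the continuous mapping theorem, which requires $\phi^{-1}$ to be continuous (for $d_\Gamma$) at $\hat{\mathsf P}$-almost every tree. The subtlety is that $\phi$ does not preserve the graph metric---this distortion is precisely what produces nontrivial scaling limits---so a ball of radius $R$ about the root on the two-type side is not the image of a ball of fixed radius on the simply generated side. I would handle this by showing the distortion is nonetheless proper almost surely: since the outgrowths are (sub)critical and hence a.s.\ finite and the spine is unique, for $\hat{\mathsf P}$-a.e.\ tree the neighbourhood $(\phi^{-1}(\cdot))^{[R]}$ is determined by, and is a continuous function of, the restriction of the simply generated tree to a neighbourhood of some a.s.\ finite (though larger) radius. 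The trees for which this fails form a $\hat{\mathsf P}$-null set, so $\phi^{-1}$ is a.s.\ continuous and the convergence transfers, giving $\tilde\nu_n\Rightarrow\tilde\nu$. Establishing this almost-sure properness of $\phi^{-1}$---controlling how far the bijection can pull distant vertices towards the root---is the main technical obstacle; the weight/size bookkeeping for $\phi$ and the identification of the limit are then routine given the computations already in hand. This mirrors the arguments used for quadrangulations in~\cite{curien:2012a}.
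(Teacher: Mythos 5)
Your proposal follows essentially the same route as the paper: push the problem through the bijection with simply generated trees (your $\phi^{-1}$ is the paper's $\Psi$), apply Janson's Theorem~\ref{th:janson}, establish continuity of $\Psi$ on a full-measure set of trees to transfer the limit, and identify $\Psi(\sgt)$ with $\tilde{\T}$ in distribution. The one caveat is that the step you call routine---matching the offspring laws and independence structure of $\Psi(\sgt)$ with those of $\tilde{\T}$, via the conditional distributions of special vertices given whether their rightmost child is special and the geometric law of rightmost ancestry paths---is in fact the bulk of the paper's proof, while the continuity of $\Psi$, which you flag as the crux, is dispatched in a short separate proposition.
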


The proof uses the theory of simply generated trees and is therefore
deferred until Section~\ref{ss:tildenuproof}.

\subsection{Weak convergence of labelled mobiles}
\label{ss:labelledmobiles}
We will now use Theorem~\ref{th:tildenu} 
to construct an infinite random mobile $\vartheta$ in
$\Theta^{(0)}$
and show that it appears as the
limit of the sequence $(\theta_n)_{n\geq 1}$ distributed by
$(\tilde{\mu}_n)_{n\geq 1}$.
Recall that $\tilde\mu_n$ is obtained from
$\tilde\nu_n$ by `putting on the labels'
and also sampling the direction $\epsilon$
of the root edge.

To construct $\vartheta$ start with the random tree
$\tilde{\T}$ with law $\tilde\nu$.
Given $\tilde{\T}$, assign independently to each of its
black vertices $v$ of finite degree $r$ an element
$B(v)$ selected uniformly from
$E_r$. If $\tilde{\T}$ has a black vertex $s$ 
of infinite degree, assign to
that vertex a sequence of independent 
random variables $(X_i)_{i\in \mathbb{Z}}$
which are independent of the $B(v)$ and with the 
law~\eqref{bridgelaw}.  Define the labels $\ell(v)$
by first labelling the root $\ell(\varnothing)=0$, and then
letting the $B(v)$ determine the increments around $v$
as described above~\eqref{labels}, and in addition letting
the increments around $s$ be given by the sequence
$(X_i)_{i\in \mathbb{Z}}$.

Let $\epsilon\in\{-1,+1\}$ be uniformly chosen and
independent of the random variables in the paragraph above.
Finally, let $\vartheta=(\tilde{\T},\ell)$ be the corresponding
infinite mobile.

\begin{lemma}\label{lem:tildemu}
Writing $\tilde\mu$ for the law of the 
pair $(\vartheta,\epsilon)$ we have that
$\tilde\mu_n\Rightarrow\tilde\mu$.
\end{lemma}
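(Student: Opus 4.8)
The plan is to upgrade the unlabelled convergence of Theorem~\ref{th:tildenu} to the labelled mobiles by checking convergence on the cylinder events that generate the topology of $d_\Theta$. Since $d_\Theta(\theta_1,\theta_2)\le 1/(R+1)$ exactly when $\theta_1^{[R]}=\theta_2^{[R]}$, and since each truncation $\theta^{[R]}$ takes values in a countable set of finite labelled trees, the events $\{\theta^{[R]}=t,\ \epsilon=e\}$ are clopen and form a convergence-determining class. It therefore suffices to prove, for every $R\ge0$, every finite labelled tree $t$ and every $e\in\{-1,1\}$, that $\tilde\mu_n(\theta^{[R]}=t,\ \epsilon=e)\to\tilde\mu(\theta^{[R]}=t,\ \epsilon=e)$. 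As $\epsilon$ is uniform and independent of everything else under both $\tilde\mu_n$ and $\tilde\mu$, I would drop it and work with $f(\theta)=\indic{\theta^{[R]}=t}$.

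Next I would disintegrate over the underlying tree. Both $\tilde\mu_n$ and $\tilde\mu$ are built by first choosing a tree and then applying one and the same labelling kernel: independent uniform elements of $E_r$ on each black vertex of finite degree $r$, together with an independent i.i.d.\ sequence with law~\eqref{bridgelaw} on a vertex of infinite degree when present, the labels being the partial sums of these increments as in~\eqref{labels} and in the construction of $\vartheta$. Writing
\[
 g_R(T)=\E\bigl[f(\theta)\mid \text{tree}=T\bigr]
\]
for the resulting bounded conditional expectation, this single function satisfies $\E_{\tilde\mu_n}[f]=\E_{\tilde\nu_n}[g_R]$ (as $\tilde\nu_n$ is carried by finite trees) and $\E_{\tilde\mu}[f]=\E_{\tilde\nu}[g_R]$ (as $\tilde\nu$ is carried by $\Gamma_\infty^\odot$). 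By Theorem~\ref{th:tildenu} we have $\tilde\nu_n\Rightarrow\tilde\nu$, so by the portmanteau theorem it is enough to show that $g_R$ is bounded and continuous at $\tilde\nu$-almost every tree.

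The remaining point is the continuity of $g_R$, which reduces to a single analytic fact. The value of $f(\theta)$ depends only on the labels of white vertices inside the ball $V^{[R]}$, hence only on finitely many increments of the black vertices in that ball. If $T$ is locally finite (the case $\tilde\kappa=1$, and all finite trees), then any $T_k\to T$ agrees with $T$ on every fixed ball for $k$ large, so the relevant vertices and their degrees stabilise and $g_R(T_k)=g_R(T)$ eventually. The substantive case is when $T$ has the unique black vertex $\iv$ of infinite degree: here $T_k\to T$ forces $\deg(\iv)$ in $T_k$ to tend to infinity, and the increments around $\iv$ visible in $V^{[R]}$ are those attached to the children $\iv_i$ of small index, which correspond to the two ends of the discrete bridge of length $r_k=\deg(\iv)$ carried by that vertex. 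Thus continuity $g_R(T_k)\to g_R(T)$ reduces to the claim that, as $r\to\infty$, the first and last few increments of a uniformly chosen element of $E_r$ converge jointly in distribution to two independent i.i.d.\ sequences with law~\eqref{bridgelaw} — equivalently to a bi-infinite i.i.d.\ sequence $(X_i)_{i\in\bbZ}$, matching the construction of $\vartheta$.

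This bridge limit is the main obstacle and is where a local limit theorem enters. Recalling from the remark after Lemma~\ref{lem:mu} that a uniform element of $E_r$ is $(X_1,\dots,X_r)$ conditioned on $\sum_j X_j=0$, the conditional probability of prescribing the first $a$ and last $b$ increments equals a product $\prod\P(X=\cdot)$ times the ratio $\P(S_{r-a-b}=c)/\P(S_r=0)$ for a fixed $c$, where $S_m$ is the associated random walk. Since the $X_j$ have mean zero and finite variance and the walk is aperiodic (its increments take every value in $\{-1,0,1,\dots\}$), the local central limit theorem gives $\P(S_{r-a-b}=c)/\P(S_r=0)\to1$, so the ratio tends to $1$ and the two ends decouple into independent i.i.d.\ blocks. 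Feeding this back establishes continuity of $g_R$ at every tree in $\Gamma_\infty^\odot$, hence $\tilde\nu$-almost everywhere, and the portmanteau step then closes the argument. I would relegate the local-limit-theorem computation to the appendix.
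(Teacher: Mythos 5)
Your proposal is correct and follows essentially the same route as the paper: both reduce, via the tree convergence of Theorem~\ref{th:tildenu} and the independence of the label increments across distinct black vertices, to the single claim that the first and last few increments of a uniform element of $E_r$ converge, as $r\to\infty$, to an i.i.d.\ sequence with law~\eqref{bridgelaw}. The only differences are cosmetic: you make the reduction explicit through a disintegration over the tree and a portmanteau/continuity argument where the paper argues more informally, and you prove the key bridge claim with the local central limit theorem (via $\P(S_{r-a-b}=c)/\P(S_r=0)\to1$) where the paper invokes direct balls-in-boxes enumeration and Stirling's approximation.
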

\begin{proof}
Let $\theta_n=((T_n,\ell_n),\epsilon_n)$ have law
$\tilde\mu_n$.  
Since $T_n\Rightarrow\tilde\T$ it suffices
to show that $\ell_n\Rightarrow\ell$ where $\ell$
is the labeling of $\vartheta$ above.  In both
$\theta_n$ and $\vartheta$, the label increments around
different black vertices are independent.  The increments
around a black vertex of finite degree are in both
cases uniformly chosen from the set $E_r$ in~\eqref{er_eq},
and we are thus done if we show that the increments around
a vertex of degree $\omega(n)\to\infty$ converge to
the corresponding sequence $(X_i)_{i\in\ZZ}$.  This
follows from the following claim, which is
easily verified by explicit `balls-in-boxes' enumeration
and Stirling's approximation.

\emph{Claim:}  Let $X_1,X_2,\dotsc$ be independent
and with law given in~\eqref{bridgelaw}.  Then for
each fixed $R\geq1$ and all
$a_1,\dotsc,a_R\in\{-1,0,1,\dotsc\}$ we have that
\[
\lim_{n\rightarrow\infty}
P\Big(X_1=a_1,\dotsc,X_R=a_R\;\Big|\sum_{j=1}^nX_j=0\Big)
=P(X_1=a_1,\dotsc,X_R=a_R).
\]
\end{proof}

Now we can prove the weak convergence of the probability
measures $\mu_n$ on planar maps:
\begin{proof}[Proof of Theorem~\ref{thm:locallimit}]
By Lemma~\ref{lem:mu} we have that $\mu_n=\Phi(\tilde\mu_n)$,
and by Proposition~\ref{p:contPhi} that $\Phi$
is continuous.  The weak convergence of $\mu_n$ towards $\mu$ follows from
Lemma~\ref{lem:tildemu}. 
The limit is one--ended by Proposition~\ref{p:contPhi} and the 
presence of a face of infinite degree when $\kappa < 1$ follows from 
the existence of a black vertex of infinite degree in 
$\tilde{\T}$. 
When $\kappa = 0$ the tree
$\tilde{\T}$ is deterministic and consists of a single black vertex of
infinite degree with white neighbours of degree 1,
and can be seen as the local limit
as $r\to\infty$ of a single black vertex of
degree $r$ with white neighbours of degree 1.  
The labels are determined by a uniformly chosen
element of $E_r$, and it follows that the corresponding
map is a uniformly chosen plane tree with $r+1$ vertices.
\end{proof}

\subsection{Simply generated trees} \label{s:sgt}
In this section we describe the model of 
\emph{simply generated trees} and
state a general convergence theorem by 
Janson~\cite{janson:2012sgt}.  In the following section we then describe a bijection
$\Psi:\Gamma_\mathrm{f}\rightarrow \Gamma_\mathrm{f}$ 
which relates the probability
measures $(\tilde{\nu}_n)_{n\geq 1}$ (defined in (\ref{tildenu})) to the
simply generated trees. We then extend $\Psi$ to a mapping
$\Psi:\Gamma\rightarrow\Gamma^\odot$ and show that it is 
continuous.
This will allow us to use the convergence results for the
simply generated trees to prove Theorem~\ref{th:tildenu}.

Simply generated trees are random trees defined by a sequence of
probability measure $(\nu_n)_{n\geq 1}$ on $\Gamma$ as follows. Let
$(w_i)_{i\geq 0}$ be a sequence of non-negative numbers and assign to
each finite tree $T$ a weight
\begin {equation}
W(T) = \prod_{v\in V(T)}w_{\deg(v)-1}
\end {equation}
and define
\begin {equation}\label{nu_def_eq}
 \nu_n(T) = \frac{W(T)}{\sum_{T'\in\Gamma_n} W(T')},
\mbox{ if } T\in \Gamma_n \mbox{ (otherwise 0)}.
\end {equation}
We assume that the weight
sequence $(w_i)_{i\geq 0}$ is defined as in and above
(\ref{wqdef}). Janson obtained a general convergence theorem for
simply generated trees in the local topology which applies for every
choice of weight sequence \cite{janson:2012sgt}. Before stating the
theorem we need a few definitions.

Let $\pi_i$ be defined as in (\ref{pi}) and as before let $\xi$ be a
random variable distributed by $(\pi_i)_{i\geq 0}$ with 
mean $\kappa\in[0,1]$.
In the extreme case $\kappa = 0$ one has 
simply $\pi_i = \delta_{i,0}$. 
Define a random variable $\hat{\xi}$ on $\{0,1,\ldots\}\cup\{\infty\}$ by
\begin {equation}\label{hat_xi_eq}
 \P(\hat{\xi} = k) = \left\{
\begin {array}{ll}
k\pi_k & \text{if}~k<\infty \\
1-\kappa & \text{if}~k=\infty.
\end {array}\right.
\end {equation}
We will now construct a modified Galton--Watson tree $\sgt$ which
arises as the local limit of the simply generated trees. We will
denote the law of $\sgt$ by $\nu$. The tree was originally defined
by Kesten~\cite{kesten:1986} (for $\kappa = 1$) and Jonsson and
Stefánsson~\cite{jonsson:2011} (for $\kappa < 1$) but here we follow
Janson's construction~\cite{janson:2012sgt}. 

In $\sgt$  there will be two types of
vertices, called normal vertices and special
vertices. The root is declared to be special. Normal vertices have
offspring independently according to the distribution $\xi$ whereas
special vertices have offspring independently according to the
distribution $\hat{\xi}$. All children of normal vertices are normal
and if a special vertex has infinite number of children they are all
normal. (In our case we assume that the infinite number of children is
ordered from left to right as explained in the beginning of Section
\ref{s:trees} whereas conventionally they are ordered as
$\mathbb{N}$. This small difference will clearly not affect the main
result). Otherwise, all children of a special vertex are normal except
for one which is chosen uniformly to be special.

The tree $\sgt$ has different characteristics depending on whether
$\xi$ is critical ($\kappa = 1$) or sub--critical
($\kappa < 1$). In the critical case $\sgt$ has a
unique infinite spine composed of the special nodes and the outgrowths
from the normal children of the vertices on the spine are independent
critical Galton--Watson trees distributed by $\xi$.  In the
sub--critical case the linear graph composed of the special nodes is
almost surely finite ending with a special node having infinite number
of normal children. It is thus a finite spine and it has a length $L$ 
distributed by $\P(L = i) = (1-\kappa)\kappa^i$
for $i\geq0$. The outgrowths from
the normal children of the vertices on the spine are then
sub--critical Galton--Watson trees distributed by $\xi$. In the
extreme case $\kappa= 0$, $\sgt$ has a spine of length 0 and the
root has an infinite number of normal children which have no children
themselves. In this case the tree is therefore deterministic.

Since the UIPTree appears repeatedly in this paper it is useful to note that $\sgt \sim$ UIPTree when $w_i = 1$ for all $i$ in which case $\kappa = 1$. In this case $\pi_i = 2^{-i-1}$. 

\begin {theorem}[Janson \cite{janson:2012sgt}] \label{th:janson}
For any sequence $(w_i)_{i\geq 0}$ such that $w_0>0$ and $w_k >
0$ for some $k\geq 2$ the sequence of measures $(\nu_n)_{n\geq 1}$ on
$\Gamma$ converges weakly towards $\nu$ (the distribution of
$\sgt$) with respect to the topology generated by $d_\Gamma$.
\end {theorem}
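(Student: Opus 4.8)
The plan is the classical three-step route to such local limit theorems: conjugate the weights to reduce $\nu_n$ to a conditioned Galton--Watson law, reduce weak convergence in $d_\Gamma$ to convergence of the truncations $T^{[R]}$, and then compute the latter through the branching property together with a local limit theorem for the total progeny.

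First I would eliminate the redundancy in the weights. Since a tree with $n$ edges has $n+1$ vertices and $\sum_{v}(\deg(v)-1)=n-1$, replacing $w_i$ by $ab^iw_i$ multiplies $W(T)=\prod_{v}w_{\deg(v)-1}$ by the factor $a^{n+1}b^{n-1}$, which depends only on $n$; hence $\nu_n$ is unchanged. Taking $a=1/g(\tau)$ and $b=\tau$ turns the weights into the probability sequence $(\pi_i)$ of~\eqref{pi}, so that $\nu_n$ is exactly the law of a Galton--Watson tree with offspring law $\xi\sim(\pi_i)$, of mean $\kappa\le 1$, conditioned to have $n$ edges. (The degenerate case $\kappa=0$, where $\pi_i=\gd_{i,0}$, is immediate and treated separately.)

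Second, $\Gamma$ is Polish under $d_\Gamma$ and the cylinders $\{T^{[R]}=t\}$ are clopen and generate the topology, so it suffices to prove, for every fixed $R\ge 0$ and every admissible finite tree $t$, that $\nu_n(T^{[R]}=t)\tend\nu(T^{[R]}=t)$ as \ntoo, while checking $\sum_t\nu(T^{[R]}=t)=1$ (this holds because the outgrowths of $\sgt$ are critical or subcritical, hence almost surely finite, so $\sgt$ is locally finite apart from at most one vertex). Fix such a $t$, let $c_v$ be the number of children of $v$ in $t$, let $\partial t$ be its $b$ vertices of generation $R$, and let $m$ be its number of edges. By the branching property, the unconditioned probability of agreeing with $t$ on the box factorizes as $\prod_{v\in V(t)\setminus\partial t}\pi_{c_v}$ times the probability that the $b$ independent subtrees rooted at $\partial t$ carry $n-m$ edges in total; dividing by $\P(|\T|=n+1)$ yields
\[
\nu_n\bigl(T^{[R]}=t\bigr)=\Bigl(\prod_{v\in V(t)\setminus\partial t}\pi_{c_v}\Bigr)\,
\frac{\P\bigl(|\T_1|+\dots+|\T_b|=n-m+b\bigr)}{\P\bigl(|\T|=n+1\bigr)},
\]
with $\T_i$ i.i.d.\ copies of $\T$ and $|\cdot|$ the number of vertices. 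By Otter--Dwass, $\P(|\T_1|+\dots+|\T_b|=N)=\tfrac bN\,\P(S_N=-b)$ and $\P(|\T|=N)=\tfrac1N\,\P(S_N=-1)$, where $S_N=\sum_{i=1}^N(\xi_i-1)$ is a random walk with increment mean $\kappa-1\le 0$, so everything reduces to the asymptotics of $\P(S_N=-b)$. When $\kappa=1$ the increments are centred, the lattice local limit theorem gives $\P(S_{n-m+b}=-b)\sim\P(S_{n+1}=-1)$, the ratio tends to $b$, and the prefactor $b$ is precisely the size-biasing that routes the spine of $\sgt$ (offspring law $\hat\xi$) through one of the $b$ boundary vertices, the remaining outgrowths being ordinary $\mathrm{GW}(\xi)$ trees; this matches $\nu(T^{[R]}=t)$.

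The main obstacle is the analysis of $\P(S_N=-b)$ when $\kappa<1$, and, more generally, the absence of any moment assumption on $\xi$. Here the walk has strictly negative drift, so returning to level $-b$ is a large-deviation event realised, with overwhelming probability, by a single large up-step --- the ``one big jump'' of a subexponential increment law. Controlling $\P(S_N=-b)$ through this mechanism, uniformly in the fixed shift $b$ and deficit $m$, is the delicate heart of the argument, and it is what forces the condensation vertex: the cylinders $t$ compatible with a vertex of very large degree must be handled by summing the offspring probability over that degree, and the big-jump estimate shows that this sum converges to the atom $1-\kappa$ of $\hat\xi$ at $\infty$, producing the unique infinite-degree vertex at the end of a spine whose length is geometric. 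Assembling the critical and subcritical limits and matching them termwise with the spine decomposition of $\sgt$ then yields the claimed weak convergence.
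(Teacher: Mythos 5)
First, a point of reference: the paper does not prove this statement at all --- it is imported verbatim from Janson \cite{janson:2012sgt} --- so there is no in-paper proof to match your sketch against; I will instead measure it against what the theorem actually requires. Your outline is the classical Kennedy/Aldous--Pitman route \cite{kennedy:1975,aldous:1986}: tilt the weights to the probability law $(\pi_i)$, reduce weak convergence to convergence of cylinder probabilities, and evaluate those by the branching property and Otter--Dwass. That skeleton is sound, and it does prove the theorem when $\pi$ is regular enough (critical with finite variance, or subcritical with a subexponential tail). The genuine gap is that the theorem assumes \emph{nothing} about $(w_i)$ beyond $w_0>0$ and $w_k>0$ for some $k\geq 2$, and both analytic inputs you invoke fail in that generality. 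In the critical case $\xi$ may have infinite variance and lie in no stable domain of attraction, so the lattice local limit theorem you use to get $\P(S_{n-m+b}=-b)\sim\P(S_{n+1}=-1)$ is not available; what is needed is a strong ratio limit theorem for arbitrary aperiodic mean-zero lattice walks, which is exactly the kind of statement that is delicate without hypotheses. In the subcritical case the offspring law need not be subexponential, and then the ``one big jump'' description of $\{S_N=-b\}$ genuinely breaks down: $\P(S_N=-b)$ can oscillate irregularly along subsequences. Janson's actual proof is engineered to avoid both inputs --- he passes to the random-allocation (balls-in-boxes) representation of the degree sequence and proves convergence of the conditional law of finitely many coordinates given their sum by soft exchangeability and ratio-comparison arguments, never invoking an LLT or a big-jump estimate. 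So the step you correctly flag as ``the delicate heart'' is precisely where your proposed mechanism does not close in the stated generality.

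Two smaller issues. The case $\kappa=0$ is not ``immediate'': there $R=0$, $\tau=0$, the tilt $b=\tau$ is degenerate, $(w_i)$ is \emph{not} equivalent to any probability weight sequence, and the entire conditioned-Galton--Watson reduction is unavailable; proving that the local limit is then the infinite star (condensation at the root) needs its own argument, cf.~\cite{janson:2011}. Also, in this paper's conventions the children of a vertex are indexed symmetrically around $0$ and $T^{[R]}$ truncates both the depth and the sibling index, so the cylinder events are not ``the first $R$ generations agree with $t$''; this is what allows the limit to see both the leftmost and rightmost children of the infinite-degree vertex, and your cylinder computation should be set up for these events (routine, but it changes the bookkeeping in the condensation case).
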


The case when $\kappa = 1$ was first proved implicitly by Kennedy \cite{kennedy:1975} and later by Aldous and Pitman \cite{aldous:1986}.  The special case when $0<\kappa < 1$ and $w_i \sim c i^{-\beta}$, $c >0$, $\beta > 2$ was originally proved by Jonsson and Stefánsson \cite{jonsson:2011}. Janson, Jonsson and Stefánsson also proved a special case when $\kappa = 0$ \cite{janson:2011}.

\subsection{The bijection $\Psi$} \label{ss:bijection}

The mapping $\Psi:\Gamma_\mathrm{f}\rightarrow\Gamma_\mathrm{f}$
which we describe now will map the model of 
simply generated trees onto the unlabelled mobiles.
In order to describe it we will temporarily violate our
colouring convention of Section~\ref{s:trees}.
Instead of coulouring  even generations
white and odd generations black, we will now
colour vertices of degree one (that is, leaves) white
and all other vertices black.  
The mapping $\Psi$ will then
precisely map the white vertices to even generations
and black vertices to odd generations.

Start with a finite tree $T\in\Gamma_n$ having $n\geq 1$ edges and
colour the vertices as described above. 
Let $v$ be a white vertex (leaf) and note that $v$ appears exactly once
in the contour sequence $(c_i)_{i\in\ZZ}$ (up to
periodicity).  Thus $v=c_{j(v)}$ for some $j(v)$.
Define
\begin {equation}
 \eta(v) = \max\{k~:~c_{j(v)}\succ c_{j(v)+1}
\succ\dotsb\succ c_{j(v)+k-1}\succ c_{j(v)+k}\}
 \end {equation}
and define the sequence $(b_i(v))_{1 \leq i \leq \eta(v)}$ by 
$b_i(v) = c_{j(v)+\eta(v)-i+1}$.  
In words, $\eta(v)$ is given by following the contour sequence
(clockwise) from $v$ for as long as this coincides
with the ancestry line of $v$.  Then $b_1(v)$ is the
earliest ancestor of $v$ on this part of the contour sequence
and $(b_i)_{1\leq i\leq \eta(v)}$ traces the ancestry line
from $b_1(v)$ to the parent $b_{\eta(v)}(v)$ of $v$.
By definition all the vertices in
$(b_i(v))_{1\leq i \leq \eta(v)}$ are black and it is straightforward
to check that
\begin {equation}
 \bigcup_{v~:~\mathrm{deg}(v)=1}
 \{v,b_1(v),b_2(v),\ldots,b_{\eta(v)}(v)\} = V(T).
\end {equation}
Now, construct a new tree $T'$ from $T$ by drawing an arc from 
each white
vertex $v$ to the corresponding black vertices in
$(b_i(v))_{1\leq i\leq \eta(v)}$.  Then throw away the edges from
$T$ and let the arcs just drawn become the
edges of $T'$. The root of $T'$ is defined as the
\emph{first} white vertex in the \emph{right} 
contour sequence. The left to right ordering of the children of
a white vertex in $T'$ is inherited from the ordering of
$(b_i(v))_{1\leq i \leq \eta(v)}$.
See Fig.~\ref{f:simpmob} for an example.

We let the vertices of $T'$ inherit the colours of the corresponding
vertices in $T$ and note that $T'$ then has a white root and that
every even generation is white and every odd generation is
black, as claimed previously. 
The black vertices in $T'$ have degree equal to
their original outdegree, i.e.~their degree is reduced by one. 
(This is true also for the black root if one attaches
a half-edge to it, as represented in Fig.~\ref{f:simpmob}.)
The degree of the
white vertex $v'$ in $T'$ corresponding to the vertex $v$ in $T$ is
\begin {equation} \label{degv}
 \deg(v') = \eta(v).
 \end {equation}
This construction defines a bijection $\Psi$ from
$\Gamma_\mathrm{f}$ to itself. For the inverse construction see
\cite{janson:2012sl}. 

Define $\tilde\Gamma$ to be the set of trees in $\Gamma$
whose right contour sequence visits
infinitely many vertices of degree 1 (that is, leaves).
We consider the latter condition to be satisfied
for a finite tree by periodicity of the contour
sequence.  It is straightforward to see that the measures 
$(\nu_n)_{n\geq1}$ and $\nu$ are all supported on
$\tilde\Gamma$.
The function $\Psi$ can be extended to a function
$\Psi: \tilde\Gamma \rightarrow \Gamma^\odot$ by exactly the same
construction as in the finite case.  

\begin{figure} [t]
\centerline{\scalebox{0.9}{\includegraphics{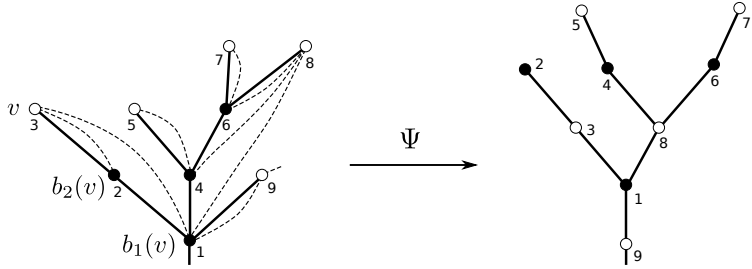}}}
\caption{An example of the bijection $\Psi$. The original tree (containing 9 vertices numbered 1-9) is
  drawn on the left hand side in solid lines. The tree obtained by $\Psi$ is drawn on top of it in in dashed lines and on the right hand side in solid lines. The roots (vertices 1 and 9 on the left and right respectively) are indicated by half-edges.} \label{f:simpmob}
\end{figure}

\begin {proposition}
 The extended mapping $\Psi: \tilde\Gamma \rightarrow \Gamma^\odot$ is 
 continuous. 
\end {proposition}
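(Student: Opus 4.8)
The plan is to establish continuity sequentially. Since $\Gamma^\odot$ carries the metric $d_\Gamma$, it is enough to show that $T_n\to T$ in $\tilde\Gamma$ implies $\Psi(T_n)\to\Psi(T)$; equivalently, fixing $R\ge 0$, I will produce an integer $R'=R'(T,R)$ such that every $U\in\tilde\Gamma$ with $U^{[R']}=T^{[R']}$ satisfies $\Psi(U)^{[R]}=\Psi(T)^{[R]}$. As $T_n\to T$ means $T_n^{[R']}=T^{[R']}$ for all large $n$, this at once gives $\Psi(T_n)^{[R]}=\Psi(T)^{[R]}$ eventually, hence $\Psi(T_n)\to\Psi(T)$. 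Note that $R'$ is allowed to depend on $T$, so only pointwise control of how far into $T$ one must look is required.

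The heart of the matter is thus to show that the ball $\Psi(T)^{[R]}$ is a function of a bounded neighbourhood of $T$. I first locate the root $\hat v$ of $\Psi(T)$. By construction it is the first white vertex (leaf) met along the right contour sequence of $T$, i.e.\ the endpoint of the path obtained by repeatedly passing to the rightmost child. For $T\in\tilde\Gamma$ this path must terminate at a leaf: were it infinite, the right contour would descend along it forever without backtracking and would meet no leaf at all, contradicting the defining property of $\tilde\Gamma$. Hence $\hat v$ sits at a finite generation and lies inside $T^{[R_0]}$ once $R_0$ is large enough to contain this rightmost path together with the outdegrees encountered along it.

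Next I use the locality of the arc construction. Every edge of $\Psi(T)$ is an arc joining a leaf $v$ of $T$ to one of its ancestors $b_i(v)$, $1\le i\le\eta(v)$, and the vertices of $\Psi(T)$ lying in $V^{[R]}$ are those of generation at most $R$ whose encoding has all coordinates in the window $\{\lfloor -R/2\rfloor+1,\dots,\lfloor R/2\rfloor\}$. Following at most $R$ such arcs out of $\hat v$ reaches only finitely many leaves of $T$, together with the finitely many ancestry segments $(b_i(v))_i$ linking them; since $T$ is locally finite away from its at most one infinite-degree vertex, this is a finite connected region of $T$. I then take $R'$ so large that $T^{[R']}$ (i) contains every vertex of this region, and (ii) records the complete set of children — hence the true degree, and in particular the leaf status — of every vertex of the region other than a possible infinite-degree vertex; both are achievable by local finiteness. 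The quantities $\eta(v)$ and the left-to-right orderings that fix the combinatorics of $\Psi(T)$ on this region are computable from the contour of $T$ restricted to it, hence from $T^{[R']}$. Consequently $\Psi(T)^{[R]}$ depends on $T$ only through $T^{[R']}$, and any $U\in\tilde\Gamma$ with $U^{[R']}=T^{[R']}$ yields the same ball.

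The main obstacle is the case in which $T$ has a vertex $\iv$ of infinite degree, which $\Psi$ converts into a black vertex of infinite degree in $\Psi(T)$. If this vertex lies within generation $R$ of $\hat v$ in $\Psi(T)$, the ball $\Psi(T)^{[R]}$ retains only those of its infinitely many white children whose encoding falls in the window above, namely finitely many, corresponding to leaves of $T$ lying in the outgrowths attached near the leftmost child $\iv_0$ and the rightmost child $\iv_{-1}$ of $\iv$. Here one must match two orderings with care: the coordinate cutoff defining $V^{[R]}$ on the image side selects an explicit finite set of children of $\iv$ on the $T$ side, and the hypothesis $T\in\tilde\Gamma$ guarantees that enough leaves occur in the right contour for these selected children to be present and to be captured by $T^{[R']}$ for $R'$ large. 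Once this bookkeeping near $\iv$ is settled, the locally finite analysis of the previous paragraph applies verbatim, and the continuity claim follows.
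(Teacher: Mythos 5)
Your argument is correct and follows essentially the same route as the paper's: both reduce the claim to producing, for each fixed $R$, a radius $R'=R'(T,R)$ such that $T^{[R']}$ determines $\Psi(T)^{[R]}$, using that the right contour of a tree in $\tilde\Gamma$ must meet a leaf and that the preimage of the ball $\Psi(T)^{[R]}$ occupies a finite region of $T$ (with the coordinate window handling the children of an infinite-degree vertex). The paper organizes the choice of $R'$ slightly differently --- by explicitly exhibiting the spine $v_0,S_{k_0},v_1,S_{k_1},\ldots$ of $\Psi(T)$ and enclosing its first $\lceil R/2\rceil$ segments together with their finite outgrowths, so that $\Psi(T_n)^{[R]}=\Psi(T_n^{[R']})^{[R]}$ --- but the underlying locality statement is the same, and like you it leaves the detailed bookkeeping for the infinite-degree case to the reader.
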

\begin {proof}
Let $T, T_1,T_2,\dotsc\in\tilde\Gamma$ with
$T_n\rightarrow T$ in the local topology, i.e.~for each $R\geq 0$ there is an $n_0$ such that
\begin {equation} \label{convT}
 T_n^{[R]} = T^{[R]}
\end {equation}
for all $n\geq n_0$. If $T$ is finite it follows immediately that
$\Psi(T_n)\to\Psi(T)$, hence we assume that $T$ is infinite. First
look at the case when $T$ has an infinite spine $S = (S_k)_{k\geq
  0}$. Denote by $(S_{k_i})_{i\geq 0}$ the subsequence of vertices on
the spine which have the property that their rightmost child is not on
the spine (the outgrowth to the right of it is nonempty). Furthermore,
for each $S_{k_i}$, let $v_{i}$ be the first white vertex following it
in the right contour sequence (it is necessarily in the nonempty
outgrowth to the right of $S_{k_i}$).  The sequences $(S_{k_i})_{i\geq
  0}$ and $(v_i)_{i\geq 0}$ are infinite due to the definition of
$\tilde \Gamma$ and $v_0,S_{k_0},v_1,S_{k_1},\ldots$ is the infinite
spine in $\Psi(T)$.

To prove the continuity of $\Psi$ we need to show that for any fixed
$R \geq 0$ the sequence $\Psi(T_n)^{[R]}$ is eventually constant. We
choose an $R'$ large enough such that $T^{[R']}$ contains 
$v_{\lceil R/2 \rceil}$ and the vertices 
$S_{0},S_1,\ldots,S_{k_{\lceil R/2  \rceil}}$ on the spine along 
with their (finite) outgrowths.
Then any vertex of $T$ not in $T^{[R']}$ maps outside
$\Psi(T)^{[R]}$, and thus $\Psi(T)^{[R]}\subseteq\Psi(T^{[R']})$.
Similarly, if $n$ is large enough that 
$T_n^{[R']}=T^{[R']}$ then $\Psi(T_n)^{[R]}\subseteq\Psi(T_n^{[R']})$.
Thus for such $n$ we have
\begin {equation}
 \Psi(T_n)^{[R]} = \Psi(T_n^{[R']})^{[R]} = \Psi(T^{[R']})^{[R]} =
 \Psi(T)^{[R]}.
\end {equation}
When $T$ has a vertex of infinite degree the proof goes along the same
lines and is left to the reader.
\end {proof}
The next result is originally from \cite{janson:2012sl} and 
the proof follows directly from the construction of the 
bijection $\Psi$ on the set of finite trees.
\begin {lemma} [\cite{janson:2012sl}] \label{l:imagepsi}
Let $(w_i)_{i\geq 0}$ be defined as in and above (\ref{wqdef}) and let
$(\tilde{\nu}_n)_{n\geq 1}$ be the sequence of measures defined in
(\ref{tildenu}). Let $(\nu_n)_{n\geq 1}$ be as in~\eqref{nu_def_eq}.
Then for each $n\geq 1$, $\tilde{\nu}_n$ is the
image of $\nu_n$ by the mapping $\Psi$.
\end {lemma}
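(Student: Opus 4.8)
The plan is to reduce the assertion to a single pointwise weight identity and then exploit that $\Psi$ is a bijection. First I would note that $\Psi$ restricts, for each fixed $n$, to a bijection of the finite set $\Gamma_n$ onto itself: the construction of $\Psi$ discards only the edges of $T$ while retaining its vertex set (the leaves of $T$ becoming the white vertices of $\Psi(T)$ and the remaining vertices the black ones), and by the displayed identity $\bigcup_{v:\deg(v)=1}\{v,b_1(v),\ldots,b_{\eta(v)}(v)\}=V(T)$ no vertex is lost; hence $|V(\Psi(T))|=|V(T)|=n+1$ and $\Psi(T)\in\Gamma_n$. Combined with the fact (whose inverse is given in \cite{janson:2012sl}) that $\Psi$ is a bijection of $\Gamma_\mathrm{f}$, this yields a bijection $\Gamma_n\to\Gamma_n$.

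Writing $Z_n=\sum_{T\in\Gamma_n}W(T)$ and $\tilde Z_n=\sum_{T\in\Gamma_n}\tilde W(T)$, the pushforward of $\nu_n$ by $\Psi$ satisfies $(\Psi_*\nu_n)(T')=\nu_n(\Psi^{-1}(T'))=W(\Psi^{-1}(T'))/Z_n$. Thus the entire lemma follows once I establish
\[
W(T)=\tilde W(\Psi(T))\qquad\text{for every }T\in\Gamma_\mathrm{f}.
\]
Indeed, reindexing the sum defining $Z_n$ via $T'=\Psi(T)$ then gives $Z_n=\tilde Z_n$, and since the numerators already agree we obtain $(\Psi_*\nu_n)(T')=\tilde W(T')/\tilde Z_n=\tilde\nu_n(T')$, as required.

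To prove the weight identity I would compare the two products vertex by vertex under the leaf/non-leaf colouring. A leaf $v$ of $T$ has $\deg_T(v)=1$ and hence contributes the factor $w_{\deg_T(v)-1}=w_0=1$ to $W(T)$; under $\Psi$ it becomes a white vertex of $\Psi(T)$, which carries no factor in $\tilde W$. The non-leaf vertices of $T$, on the other hand, are mapped bijectively onto $V^\bullet(\Psi(T))$, and the construction of $\Psi$ gives the degree relation $\deg_{\Psi(T)}(u')=\out(u)=\deg_T(u)-1$ (each black vertex of $\Psi(T)$ inherits the original outdegree of its preimage). Consequently the factor $w_{\deg_T(u)-1}$ contributed by such a $u$ to $W(T)$ equals the factor $w_{\deg_{\Psi(T)}(u')}$ contributed by $u'$ to $\tilde W(\Psi(T))$, and multiplying over all vertices gives $W(T)=\tilde W(\Psi(T))$.

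The one case needing genuine care, which I expect to be the only real obstacle, is the root of $T$. Because of the convention $\deg(\varnothing)=\out(\varnothing)$ fixed in Section~\ref{ss:pt}, the root contributes $w_{\out(\varnothing)-1}$ to $W(T)$, whereas a naive count of the graph-degree of the corresponding ``black root'' of $\Psi(T)$ gives $\out(\varnothing)$. The relation $\deg_{\Psi(T)}(u')=\deg_T(u)-1$ therefore persists at the root only once one edge incident to the black root is interpreted as the half-edge drawn in Figure~\ref{f:simpmob} and excluded from the mobile degree. I would verify this boundary bookkeeping directly from the definition of the right contour sequence and from the rule declaring the first white vertex of the right contour to be the root of $\Psi(T)$; with this the product identity over all vertices holds, and the lemma follows.
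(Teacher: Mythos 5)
Your overall strategy --- reducing the lemma to the pointwise identity $W(T)=\tilde W(\Psi(T))$, observing that $\Psi$ restricts to a bijection of $\Gamma_n$ onto itself because it preserves the vertex set, and then reindexing the partition function --- is exactly the intended argument; the paper gives no details beyond citing \cite{janson:2012sl} and the construction of $\Psi$. Your treatment of all non-root vertices is correct: a leaf contributes $w_0=1$ to $W(T)$ and nothing to $\tilde W(\Psi(T))$, while a non-root internal vertex $u$ contributes $w_{\deg_T(u)-1}=w_{\out(u)}$ to $W(T)$ and its image, a black vertex of mobile degree $\out(u)$, contributes the same factor to $\tilde W(\Psi(T))$.

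The gap is in your resolution of the root case, which patches the off-by-one in the wrong direction. You keep the root's contribution to $W(T)$ as $w_{\out(\varnothing)-1}$ (following the literal convention $\deg(\varnothing)=\out(\varnothing)$ of Section~\ref{ss:pt}) and instead shave one unit off the mobile degree of the image of $\varnothing$ by declaring one of its incident arcs to be a half-edge excluded from the degree count. But the weight $\tilde W$ in \eqref{tildenu} must be computed with the \emph{actual} graph degree of every black vertex of the mobile: that degree is half the degree of the corresponding face of the map under the BDG bijection and is also the degree entering $\lambda(T)$ in \eqref{lambda}, so altering it for one vertex yields a measure that is no longer the $\tilde\nu_n$ appearing in Lemma~\ref{lem:mu} and Theorem~\ref{th:tildenu}. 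Concretely, if $T$ is the root with two leaf children, then $\Psi(T)$ is a path whose unique black vertex has mobile degree $2$, corresponding to a map with one face of degree $4$ and weight $q_2$; the correct factor is $w_2$, whereas your bookkeeping produces $w_1$ on both sides. The half-edge in Fig.~\ref{f:simpmob} is attached to the root of the \emph{original} tree $T$, and its role is the opposite of what you assume: it makes $\deg_T(\varnothing)=\out(\varnothing)+1$, so that $W(T)=\prod_v w_{\out(v)}$ uniformly over all vertices --- which is also what Theorem~\ref{th:janson} requires, since Janson's simply generated trees are weighted by outdegrees, and which guarantees that the root of $T$ is never a leaf and hence always black. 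With that reading the root contributes $w_{\out(\varnothing)}$ to $W(T)$, its image is a black vertex of genuine mobile degree $\out(\varnothing)$ contributing the same factor to $\tilde W(\Psi(T))$, no adjustment on the mobile side is needed, and the rest of your argument goes through verbatim.
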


\subsection{Proof of Theorem \ref{th:tildenu}}
\label{ss:tildenuproof}
 Let $(T_n)_{n\geq 1}$ be a sequence of trees distributed by
 $(\tilde{\nu}_n)_{n\geq 1}$. By Theorem \ref{th:janson} and Lemma
 \ref{l:imagepsi} it holds that $T_n \rightarrow \Psi(\sgt)$ in
 distribution. The only thing left is to show that $\Psi(\sgt) =
 \tilde{\T}$ in distribution. 
In this proof we follow the colouring convention of the previous
subsection, that is vertices of degree one in $\sgt$ are white
and the rest black.

Firstly, it is straightforward to see that
$\Psi(\sgt)$ has a (unique) infinite spine 
if and only if $\sgt$ has an infinite
spine, and that 
$\Psi(\sgt)$ has a (unique) vertex of infinite degree 
if and only if $\sgt$ has a vertex of  infinite degree.
Indeed, if $\sgt$ has a vertex of infinite degree then
(the image of) this vertex has infinite degree also
in $\Psi(\sgt)$.  If $\sgt$ has an infinite
spine $S_0,S_1,\dotsc$ then the infinite spine of
$\Psi(\sgt)$, call it $S'$, may be found as follows. The black vertices in $S'$ are the black vertices in $S$ whose rightmost children are not special and their order in $S'$ is inherited from their order in $S$ . A white vertex in $S'$ preceding a given black vertex $v'$ in $S'$ is the first white vertex in the right contour sequence of $\sgt$ which appears after the first occurrence of the black vertex in $\sgt$ corresponding to $v'$.

We start by checking that the black vertices have the correct
outdegree distribution in $\Psi(\sgt)$, then that 
$\Psi(\sgt)$ has the independence structure of 
a (modified) Galton--Watson 
tree, and finally that the white vertices have the right
outdegree distribution.
We will
divide the black vertices in $\sgt$ into three categories:
\begin {enumerate}
 \item Normal black vertices.\label{i:nb}
\item Special black vertices which have a special child as the
  rightmost child. \label{i:sbr}
\item Special black vertices which do not have a special child as the
  rightmost child. \label{i:sbnr}
\end {enumerate}
The vertices belonging to (\ref{i:nb}) and (\ref{i:sbr}) correspond
exactly to the normal black vertices in $\Psi(\sgt)$,
and the vertices in (\ref{i:sbnr})
correspond to the special black vertices. 
Indeed, a vertex of type (\ref{i:nb}) has outdegree 
in $\sgt$ taking value
$i\geq1$ with probability
\begin {equation}
\P(\xi = i ~|~ \xi > 0) = \pi_i/(1-\pi_0)
\end {equation}
and the probability that a vertex of type (\ref{i:sbr})
has outdegree $i\geq1$ in $\sgt$ equals the conditional
probability that a special vertex in $\sgt$ has $i$ children
given that the rightmost child is special, which is
\begin {equation} \label{rightmost}
 \frac{\P(\hat{\xi}=i)/i}{\sum_{j=1}^\infty \P(\hat{\xi}=j)/j } =
 \pi_i/(1-\pi_0).
\end {equation}
Since the mapping $\Psi$ reduces the degree of black vertices by 1 we
see that the outdegree of the black vertices in
$\Psi(\sgt)$ corresponding to (\ref{i:nb}) and (\ref{i:sbr}) 
takes value $i\geq0$ with probability
$\pi_{i+1}/(1-\pi_0)$, in agreement with the
distribution of $\xi^\bullet$. 

The probability that a vertex of type (\ref{i:sbnr})
has outdegree $1\leq i<\infty$ in $\sgt$ equals the
conditional probability that a special vertex in $\sgt$
has $i$ children given that the rightmost child is not
special, which is
\begin {equation} \label{notspecial}
 \frac{\P(\hat{\xi}=i)(1-1/i)}{1-\sum_{j=1}^\infty\mathbb{P}(\hat{\xi}=j)/j
 } = (i-1)\pi_i/\pi_0.
\end {equation}
Similarly, vertices of type (\ref{i:sbnr}) have infinite 
degree with probability
$P(\hat\xi=\infty)/\pi_0=(1-\kappa)/\pi_0$. 
Again, by shifting by one we find
that this agrees with the distribution of $\hat{\xi}^\bullet$.

We now consider the white vertices.
For a white vertex $v$ in $\sgt$ we
recall the definitions of $\eta(v)$ and 
$(b_i(v))_{1\leq i \leq\eta(v)}$ from 
Section~\ref{ss:bijection}. We will suppress the
argument $v$ in the following for easier notation. The white vertex in
$\Psi(\sgt)$ corresponding to $v$ in $\sgt$ will be denoted by
$v'$.  If $v'=\varnothing$ then its offspring
(in $\Psi(\sgt)$) correspond exactly to the black vertices
$b_1,\dotsc,b_\eta$ in $\sgt$, whereas if 
$v'\neq\varnothing$ then its offspring
correspond to $b_2,\dotsc,b_\eta$, with
$b_1$ corresponding to the parent of $v'$.
Conditioning on the number of offspring of a white
vertex $v'\in\Psi(\sgt)$ thus corresponds to
conditioning on the length of a `rightmost' ancestry path in 
$\sgt$.  From this it is easy to see that the
children of $v'$ have independent numbers of offspring
in $\Psi(\sgt)$, and furthermore that the same holds for the 
white vertices forming the following generation
in $\Psi(\sgt)$.  This implies that $\Psi(\sgt)$
has the correct independence structure.

It remains to check that the white vertices have the correct
offspring distribution.  Starting with the root $\varnothing$,
its offspring in $\Psi(\sgt)$, ordered from left to right,
consist of: firstly, some number $i\geq0$
of black vertices of type (\ref{i:sbr}) above; next,
one vertex of type (\ref{i:sbnr}); and finally some 
number $j\geq0$ of vertices of type (\ref{i:nb}).
The number of offspring of $\varnothing$ is then $k=i+j+1$,
and this occurs with probability
\[
(1-\pi_0)^i\pi_0(1-\pi_0)^j\pi_0=\pi_0^2(1-\pi_0)^{k-1}.
\]
Here the first factors $(1-\pi_0)^i\pi_0$ are due to the
occurrence, in the sequence $(b_i)_{1\leq i\leq\eta}$,
of $i$ special vertices each of whose rightmost child
is not special, followed by one special vertex whose
rightmost child is special.  The remaining factors
$(1-\pi_0)^j\pi_0$ are due to the occurrence
of $j$ normal vertices with at least one offspring
each, followed by one with no offspring.  Summing over
the possible values of $i$ gives the probability
$k\pi_0^2(1-\pi_0)^{k-1}$ of $\varnothing$
having $k$ offspring, in agreement with
 the distribution of $\hat\xi^\circ$.
Note that, given the outdegree (number of offspring)
of $\varnothing$, the black child of type (\ref{i:sbnr})
is uniformly distributed.

Having dealt with the root of $\Psi(\sgt)$,
the remaining white vertices $v$ in 
$\sgt$ are divided into two
categories:
\begin {enumerate}
\item either $\eta=1$, or $\eta>1$ and $b_2$ is normal; 
 \label{i:normal}
\item $\eta > 1$ and  $b_{2}$ is
  special. \label{i:special}
\end {enumerate}

White vertices $v$ in category (\ref{i:normal}) correspond exactly to
the normal white vertices in $\Psi(\sgt)$. In this case, each
black vertex in $(b_i)_{2\leq i \leq \eta}$ is normal and has
at least one child in $\sgt$, whereas $v$ has no child in $\sgt$.
Thus, by (\ref{degv}) the outdegree of the white vertex $v'$
in $\Psi(\sgt)$ satisfies
\begin {equation}
 \P(\out(v') = i) = \P(\eta - 1 = i) = 
(1-\pi_0)^i \pi_0 ,\quad i\geq 0,
\end {equation}
agreeing with the distribution of $\xi^\circ$.

Case (\ref{i:special}) is handled in the same way as the case
$v'=\varnothing$, showing that the outdegree in
$\Psi(\sgt)$ is distributed as $\hat{\xi}^\circ$.  
Thus we have shown
that $\Psi(\sgt) = \tilde{\T}$ in distribution.
\qed

\section{Recurrence}\label{s:rec}

In this section we prove Theorem~\ref{thm:recurrence}.
As mentioned previously, we will rely on a general result
established in \cite{gurel:2013}, which we begin by describing.
Suppose $(G_n)_{n\geq1}$ is a sequence of finite graphs,
and that in each graph $G_n$ is singled out a 
\emph{root vertex} $o_n$.    One may define a local limit of such 
a sequence of rooted graphs $(G_n,o_n)$ in much the same way as in
Section~\ref{s:planarmaps}:  $(G_n,o_n)$ converges locally to 
$(G,o)$ if for each $r$, the graph ball of $(G_n,o_n)$ centered
at $o_n$ with radius $r$ eventually equals the corresponding graph
ball of $(G,o)$.  Now suppose that each $(G_n,o_n)$ is a random,
planar graph, viewed up to isomorphism of rooted graphs.
We say that the root $o_n$ has the \emph{stationary distribution}
if, given $G_n$, the probability that $o_n$ is some fixed vertex
$v$ of $G_n$ is proportional to the degree of $v$.
Building on results by Benjamini and Schramm \cite{benjamini:2001}, who considered the 
case when the maximum degree in $G_n$ is uniformly bounded,
Gurel-Gurevich and Nachmias proved the following:
\begin{theorem}[\cite{gurel:2013}]\label{ggn_thm}
Let $(G_n,o_n)$ be a sequence of finite, random planar graphs such 
that $o_n$ has the stationary distribution for each $n$, and
such that $(G_n,o_n)$ converge weakly to $(G,o)$ in the local
topology.  If the degree distribution of $o$ in $G$ has an exponential
tail, then $G$ is almost surely recurrent.
\end{theorem}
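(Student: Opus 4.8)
The plan is to derive the statement directly from the theory of circle packings of planar triangulations, using as the essential structural input that the limit $(G,o)$ is a \emph{unimodular} (equivalently, reversible) random rooted graph. The first step is to record this unimodularity: since $o_n$ carries the stationary (degree-biased) distribution and $(G_n,o_n)\to(G,o)$ in the local topology, the limit obeys the mass-transport principle; this is the Benjamini--Schramm construction \cite{benjamini:2001}, and it is the only point at which the stationarity of the roots $o_n$ is used. By an ergodic decomposition one may further assume the law of $(G,o)$ is ergodic, so that the (re-rooting invariant) event ``$G$ is transient'' has probability $0$ or $1$; it therefore suffices to rule out a.s.\ transience. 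A preliminary reduction is to pass from a general planar graph to a triangulation: one embeds $G$ as a subgraph of a unimodular random planar triangulation $T$ (for instance by inserting a vertex in each face and joining it to the vertices of that face), so that recurrence of $T$ forces recurrence of $G$ by Rayleigh monotonicity, since deleting the added edges and vertices can only increase $\Reff(o\leftrightarrow\infty)$. The one point to verify carefully here is that the exponential tail of $\deg(o)$ is inherited by the root of $T$; this is where the mass-transport principle is invoked a second time, to convert the exponential tail on vertex degrees into exponential control on the face degrees (hence on the degrees of the inserted vertices) seen around the root.

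With this reduction in place the heart of the argument is the circle packing. By the Koebe--Andreev--Thurston theorem every finite triangulation is the tangency graph of a circle packing, and by the He--Schramm theory a one-ended infinite triangulation is packed either in the whole plane $\mathbb{C}$ (the \emph{CP-parabolic} case) or in the unit disk $\mathbb{D}$ (the \emph{CP-hyperbolic} case), with the parabolic case corresponding to recurrence. I would use the circle packing radii $(r_v)$ and the coordinate map $v\mapsto z_v$ (the centre of the circle at $v$) as the central objects: in the hyperbolic case the coordinate map is a bounded, non-constant, almost-harmonic function whose discrete Dirichlet energy equals, up to universal constants, the total packed area, and is therefore finite; conversely, recurrence is equivalent to $\Reff(o\leftrightarrow\infty)=\infty$, which I would aim to establish directly from the packing geometry.

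The main obstacle, and the precise reason the exponential-tail hypothesis appears, is that the classical bounded-degree treatment of both He--Schramm implications relies on the \emph{ring lemma}, which bounds the ratio of radii of tangent circles and thereby lets one compare simple random walk to a genuinely two-dimensional (and hence recurrent) motion; the ring lemma fails outright without a degree bound. My plan to circumvent it is to replace the deterministic ring-lemma comparison by a probabilistic one built on two ingredients. The first is a purely geometric area/packing estimate — the number of circles of radius at least $\rho$ meeting a bounded region is controlled by areas, independently of any degree bound — which survives the loss of the ring lemma. The second is the exponential tail of $\deg(o)$, which, transported across edges via the mass-transport principle, supplies exactly the integrability needed to show that the contribution of high-degree (hence distorted) vertices to the relevant resistance and energy sums is negligible. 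Combining these, I would show on the one hand that a.s.\ transience would force the finite-energy coordinate map to have energy bounded below by a divergent sum, a contradiction ruling out the hyperbolic case, and on the other hand that in the parabolic case the same estimates give $\Reff(o\leftrightarrow\infty)=\infty$ a.s.; hence $T$ is recurrent and, by the subgraph comparison, so is $G$. I expect the delicate step to be exactly this energy/mass-transport estimate: making precise how the exponential tail tames the radius distortions that the ring lemma would otherwise have bounded deterministically, and checking that every transported quantity is integrable so that the mass-transport principle legitimately applies.
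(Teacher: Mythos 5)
You should first note that the paper does not prove this statement at all: it is imported verbatim from Gurel--Gurevich and Nachmias \cite{gurel:2013}, so your proposal can only be compared with their published argument. At the level of strategy your skeleton is close to theirs --- circle packings, a pure area/packing estimate in place of the ring lemma, and the exponential tail together with the stationarity of the root (via Benjamini--Schramm unimodularity and mass transport, which is indeed the only place the stationarity of $o_n$ enters) to control the distortion that the ring lemma would have bounded deterministically. However, two of your reduction steps contain genuine gaps.

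First, the triangulation reduction fails. Inserting a vertex in each face creates a vertex whose degree equals the face degree, and an exponential tail for vertex degrees does \emph{not} imply any tail bound for face degrees: no mass-transport argument can rescue this, because the implication is simply false (consider graphs with all vertex degrees at most $3$ but enormous faces; indeed this very paper remarks in Section~\ref{ss:mainresults} that the degree of a typical face need not have exponential tails even when the vertex degrees do, and in the application with $\kappa<1$ the limit map has a face of \emph{infinite} degree, so your triangulated limit would not even be locally finite). Thus your reduction destroys the hypothesis of the theorem precisely in the cases it is needed. Second, your use of the He--Schramm dichotomy is close to circular: the equivalence between CP-parabolicity and recurrence is a bounded-degree theorem, and in the unbounded-degree regime it is exactly the kind of statement that requires proof (it can fail without extra moment or unimodularity assumptions), so one cannot structure the argument around ruling out the CP-hyperbolic case and then concluding recurrence from parabolicity. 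The actual proof in \cite{gurel:2013} avoids both difficulties: it never triangulates (Koebe's theorem packs every finite simple planar graph, uniqueness being irrelevant) and never invokes the dichotomy; instead it proves, for the packing of each finite $G_n$, a quantitative lower bound on the effective resistance across dyadic annuli --- using precisely your area estimate together with control of the few large circles --- and then uses the exponential tail and the stationarity of the root in a multiscale argument to show that in the limit infinitely many annuli are ``good'' almost surely, so that the effective resistance from $o$ to infinity diverges and $G$ is recurrent. Your instinct about where the exponential tail enters is right, but the argument must be run directly on resistances across annuli of the packed finite graphs, not through a triangulation plus the parabolic/hyperbolic classification.
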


In applying this result to our situation, we take the root vertex
$o_n$ to be the origin $e_-$ of the root edge $e$.
There are two main steps to applying Theorem~\ref{ggn_thm}:
firstly, proving that $e_-$ has the
stationary distribution under each $\mu_n$; 
and secondly, proving that the degree of
$e_-$ has an exponential tail under $\mu$.  

The claim that $e_-$ has the stationary distribution is equivalent to
the statement that the directed edge $e$ is chosen uniformly among all
directed edges of $\map$. By a simple calculation, this follows from
the fact that the probability assigned by $\mu_n$ to a rooted map does
not depend on the choice of root edge. To prove
Theorem~\ref{thm:recurrence} it therefore suffices to show that $e_-$
has an exponential tail under $\mu$.

\subsection{Bound on the degrees in $M$}

 Recall from Sections~\ref{ss:tildenu}--\ref{ss:labelledmobiles}
the infinite mobile $\vartheta=((\tilde\T,\ell),\epsilon)$ 
which (via the BDG bijection)
defines the map $M$.  The tree  $\tilde\T$ consists
of a spine of special black and white vertices, with
finite normal trees attached on the left and right sides. 
In this section we will describe a slightly different way
of constructing $\vartheta$ which will let us deduce
bounds on the degrees of the vertices in $M$.  

Recall the random variables $\xi^\bullet$ and $\xi^\circ$
of Section~\ref{ss:tildenu};  they have the distribution
of the outdegree of normal black and white vertices in
$\tilde\T$, respectively. We will assume in this section that $\pi_0
< 1$, or equivalently that $\kappa>0$.  
When $\pi_0 = 1$, $M$ is the UIPTree and its degree 
distribution (and the fact that it is recurrent) 
is well known. It is essential for our argument
that $\xi^\circ$ is geometrically distributed:
$\PP(\xi^\circ=i)=\pi_0(1-\pi_0)^i$ for $i\geq0$.
Also recall that for each normal black vertex $v$ of
$\tilde\T$ with outdegree $r\geq1$, the clockwise label
increments around $v$ form a discrete bridge 
$(X_1^{(r)},\dotsc,X_{r+1}^{(r)})$ with law described 
just below~\eqref{bridgelaw}.  We shall be particularly 
interested in the event that $X_{r+1}^{(r)}\geq1$;
that is to say, the last increment in the clockwise order
is 1 or more, or equivalently the \emph{first} increment
in the anticlockwise order is $-1$ or less.
For reasons that will become clear soon, we define
\[
p:=(1-\pi_0)\sum_{r=1}^\infty 
\P(\xi^\bullet = r) \P(X_{r+1}^{(r)}\geq1).
\]
The sum equals the probability that a normal black vertex
has outdegree at least 1 and that the last clockwise
increment is at least 1.    
Let $\zeta,\zeta_1,\zeta_2,\dotsc$ be independent
random variables, having the
geometric distribution
$\P(\zeta=k)=p(1-p)^{k-1}$ for $k\geq1$.
Also let
\[
p':=(1-\pi_0)\sum_{r=1}^\infty \P(\xi^\bullet = r)
\P(X_{1}^{(r)}=-1),
\]
and let $\zeta'$ be independent of the $\zeta$:s
with geometric distribution
$\P(\zeta'=k)=p'(1-p')^{k-1}$ for $k\geq1$.
Since $\pi_0<1$ and since we always assume that
$q_i>0$ for some $i\geq2$ we have that 
$p>0$ and $p'>0$.

Finally recall the concept of \emph{stochastic domination}:
a random variable $X$ is stochastically dominated
by a random variable $Y$ if there is a coupling
$\PP$ of $X$ and $Y$ such that $\PP(X\leq Y)=1$.
Let $\xi^\circ_1,\xi^\circ_2$ be independent copies
of $\xi^\circ$, independent also of the $\zeta_i$
and $\zeta'$.
This section is devoted to the following result:
\begin{theorem}\label{deg_thm}
If  $\kappa>0$ the the degree of the root $e_-$ in $M$
is stochastically  dominated by the sum
\begin{equation}\label{degbound_eq}
\zeta'+\sum_{j=1}^{2+\xi^\circ_1+\xi^\circ_2}(1+\zeta_j).
\end{equation}
\end{theorem}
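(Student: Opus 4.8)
The plan is to read the degree of $e_-$ off the BDG construction of $M=\Phi(\vartheta,\epsilon)$ and to dominate each contribution by one of the variables in~\eqref{degbound_eq}. First I would note that $e_-$ is always a white vertex of the mobile $\tilde\T$: it equals $c^\circ_0=\varnothing$ when $\epsilon=-1$ and $\sigma(c^\circ_0)$ when $\epsilon=+1$. In the construction the edges of $M$ incident to a white vertex $w$ come in exactly two kinds: for each corner of $w$ there is one \emph{outgoing} arc, drawn to the successor of that corner; and for each corner $c$ of the whole mobile with $\sigma(c)=w$ there is one \emph{incoming} arc. Writing $I_c$ for the number of incoming arcs attached at a given corner $c$ of $w$, this yields the exact identity
\[
\deg_M(e_-)=\sum_{c\ \text{corner of}\ e_-}\bigl(1+I_c\bigr),
\]
so it suffices to dominate the number of corners of $e_-$ and the individual counts $I_c$.

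Next I would bound the number of corners, which equals $\deg_{\tilde\T}(w)$, i.e.\ the number of children of $w$ plus one if $w$ has a parent. A normal white vertex has $\xi^\circ$ children, the special root has $\hat\xi^\circ$ children and no parent, and a non-root special white vertex has $\hat\xi^\circ$ children and a parent. Since $\xi^\circ$ is geometric with $\P(\xi^\circ=i)=\pi_0(1-\pi_0)^i$, a short computation gives the size-biasing identity $\hat\xi^\circ\eqd 1+\xi^\circ_1+\xi^\circ_2$. Hence the number of corners of any white vertex is stochastically dominated by $2+\xi^\circ_1+\xi^\circ_2$, uniformly in the value of $\epsilon$; this is what fixes the number of summands in~\eqref{degbound_eq}.

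The crux is the bound on $I_c$. Fixing a corner $c$ of $e_-$ at label $m$ and reading the white contour backwards from $c$ (through decreasing index), one checks, using that clockwise label increments are at least $-1$ (rule~\eqref{e:labelinc}), that a corner $c^\circ_j$ has $\sigma(c^\circ_j)=c$ precisely when $\ell(c^\circ_j)=m+1$ and every intervening white label is $\ge m+1$. Thus $I_c$ is the number of visits to level $m+1$ made by the label walk during the excursion above $m$ bordered by $c$. Because the increments around distinct black vertices are independent in the construction of $\vartheta$ and the white contour alternates white and black vertices, this excursion has a renewal structure: each time the walk sits at level $m+1$ and passes a further normal black vertex it drops below $m+1$, terminating the excursion, with probability $p$ when the escape is controlled by the last clockwise increment (child side) and with probability $p'$ when it is controlled by the first increment (parent/spine side). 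By the strong Markov property I would then argue $I_c\preceq\zeta_j$ for the corners read on the child side and $1+I_{c^*}\preceq\zeta'$ for the single corner read towards the parent/spine, so that, collecting the latter into the standalone term and the former into the sum,
\[
\deg_M(e_-)=\sum_c\bigl(1+I_c\bigr)\ \le\ \zeta'+\sum_{j=1}^{2+\xi^\circ_1+\xi^\circ_2}\bigl(1+\zeta_j\bigr).
\]

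I expect the main obstacle to be this last (renewal/geometric) step. The difficulty is that the label increments are not globally i.i.d.: within a single bridge they are dependent and the number $r$ of increments is itself random (distributed through $\xi^\bullet$), so the per-vertex escape probability only emerges as the averaged quantities $p$ and $p'$, and one must verify that the geometric tail survives the conditioning along the spine—and in particular is not spoiled by the black vertex of infinite degree present when $\kappa<1$, with its bi-infinite increment sequence. Some care is also needed in the bookkeeping so that exactly one corner is charged to $\zeta'$, the coupling can be made simultaneous using the independence of the bridge increments around the relevant black vertices, and the orientation variable $\epsilon$ produces no extra contribution, keeping the total within the allotted $2+\xi^\circ_1+\xi^\circ_2$ summands.
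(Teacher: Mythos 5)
Your overall architecture is the same as the paper's: a progressive construction of the mobile along the contour sequence, the exact identity $\deg_M(e_-)=\sum_c(1+I_c)$ over corners, the size-biasing identity $\hat\xi^\circ\eqd 1+\xi^\circ_1+\xi^\circ_2$ to control the number of corners, and a renewal argument showing that the backward (counterclockwise) excursion attached to each corner contributes a number of predecessors dominated by a geometric variable with escape probability $p$ (the last clockwise increment being at least $1$). All of that matches the paper and is correct.

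The gap is in the case $\epsilon=+1$, i.e.\ $e_-=\sigma(\varnothing)$, which is where $\zeta'$ actually enters, and your accounting of it is wrong. You claim $1+I_{c^*}\preceq\zeta'$ for ``the single corner read towards the parent/spine''. But the first corner of $e_-$ collects predecessors from two \emph{independent} sources: (i) every corner with label $0$ lying between $\varnothing$ and the first occurrence of $e_-$ in the clockwise contour is automatically a predecessor of $e_-$, since $e_-$ is by definition the first corner labelled $-1$; and (ii) corners with label $0$ lying counterclockwise of $\varnothing$, up to the first label $\leq-1$ in that direction. Source (i) alone already exhausts $\zeta'$: it is the number of label-$0$ corners encountered before the first success of an event of probability $p'$ in the \emph{clockwise} reading. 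Source (ii) contributes an independent quantity dominated by $\zeta$. So that corner needs $\zeta'+\zeta+1$, i.e.\ both the standalone $\zeta'$ and one full summand $(1+\zeta_j)$, and the domination $1+I_{c^*}\preceq\zeta'$ is false. Relatedly, your renewal argument with parameter $p$ cannot be run backward across the segment between $\varnothing$ and $e_-$: that segment is the clockwise contour stopped at its first label $-1$, so read backward from $e_-$ it contains no ``escape'' at all, and one must instead bound the number of label-$0$ corners in it by a forward renewal argument with the different parameter $p'$ (first clockwise increment equal to $-1$ rather than last clockwise increment at least $1$). Your attribution of $p'$ to a parent-versus-child distinction within the backward reading misidentifies why the two parameters appear; in the backward reading every escape is governed by $p$. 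The bookkeeping can still be closed — $e_-$ has at most $2+\xi^\circ_1+\xi^\circ_2$ corners, the first receives $(1+\zeta_1)+\zeta'$ and the remaining ones $(1+\zeta_j)$ each — but as written your argument does not establish \eqref{degbound_eq} when $\epsilon=+1$.
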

Theorem~\ref{deg_thm} immediately gives that the
degree of the root in $M$ has exponential tails,
and as explained above, Theorem~\ref{thm:recurrence}
therefore follows once we prove Theorem~\ref{deg_thm}.
\begin{proof}
Recall that the root $e_-$ of $M$ is either the same vertex as the
root $\varnothing$ of $\vartheta$ (if $\epsilon=-1$) or it is the
successor of $\varnothing$ (if $\epsilon=+1$).  We will show that in
the first case the degree is bounded by the smaller number
\begin{equation}\label{degbd_1}
\sum_{j=1}^{2+\xi^\circ_1+\xi^\circ_2}(1+\zeta_j),
\end{equation}
and in the second case by~\eqref{degbound_eq}.

We begin with the case $\epsilon=-1$ when $e_-=\varnothing$.
Our argument (and bound on the degree) in this case actually applies 
slightly more generally, to any vertex in $M$ which corresponds to a
vertex `in a fixed position in $\vartheta$'.  We will describe what we
mean by this below.  The argument relies on  a construction of $\vartheta$
which proceeds progressively through the counterclockwise contour 
sequence (see Section~\ref{ss:mobiles}).  We start by
defining a `template' $\vartheta(0)$ of $\vartheta$, by letting
$\vartheta(0)$ denote the mobile obtained by sampling the spine
and all white vertices adjacent to the spine, as well as the
labels of all these white vertices
(subject to $\varnothing$ having label 0, say).  
For convenience we slightly modify  $\vartheta(0)$
by placing a `half-edge' at $\varnothing$,
which  allows us to distinguish between the
`left' and `right' sides of $\varnothing$.
We denote the white contour
sequence of $\vartheta(0)$ by $(c^\circ_i(0))_{i\in\ZZ}$.
This is defined as in Section~\ref{ss:mobiles},
except that (due to the half-edge)
$\varnothing$ is repeated one extra time.  

Here is a summary of the
main idea;  details will follow.  
Let $\eps_1,\eps_2,\dotsc$ be independent Bernoulli
variables taking value 1 with probability $1-\pi_0$, and note
that $\xi^\circ+1$ has the law of the smallest $k$
such that $\eps_k=0$.
The procedure starts 
at some white vertex $w$, and each time
a white vertex, say $v$, is visited the next value $\eps_i$
is examined to determine whether $v$ has `another'
black child.  If so, the number of white children
of this new black vertex is sampled (along with their labels)
and we proceed to the next white vertex in the
counterclockwise contour order.  
This procedure will create the
part of $\vartheta$ which lies after the initial
vertex $w$ in the counterclockwise contour sequence, and
will therefore let us examine the number of white vertices
in $\vartheta$ which have $w$ as their successor.

\begin{figure}[htb]
\begin{center}
\includegraphics{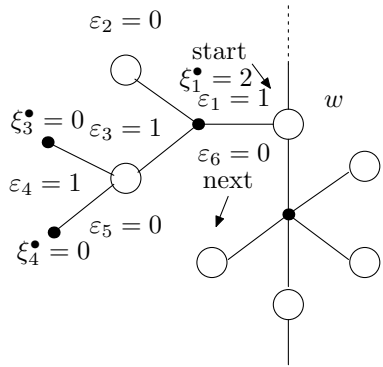}
\caption{The first 6 steps of the construction starting
at a vertex $w$ on the spine.  Labels on white vertices
are not shown.} \label{f:degree}
\end{center}
\end{figure}
Here is a more detailed description,  see also
Fig.~\ref{f:degree} for an illustration.    Let 
$\xi^\bullet_1,\xi^\bullet_2,\dotsc$ be independent,
distributed as $\xi^\bullet$.
We take all the $\zeta'$, $\zeta_i$, $\eps_i$ 
and $\xi^\bullet_i$ to be independent of each other.
We now describe the construction starting 
at a white vertex $w$ of $\vartheta(0)$.  We start our
construction at the \emph{last} visit of
the (clockwise) contour sequence to $w$;  that is, the largest
$i\in\ZZ$ such that $c^\circ_i(0)=w$.  
By shifting the indices, we may (and will) assume
that this smallest index is $i=0$.
We now examine
$\eps_1$.  If $\eps_1=1$  we do the following.
First attach a black vertex to $c^\circ_0(0)$.  
Then  examine the value
of $\xi_1^\bullet$;  if $\xi_1^\bullet=r\geq1$ we attach to this
black vertex $r$ further white vertices.
Finally sample the labels of the new white
vertices by sampling an independent copy of
the discrete bridge $(X_1^{(r)},\dotsc,X_{r+1}^{(r)})$.
We denote the mobile thus obtained by $\vartheta(1)$.
If, on the other hand, $\eps_1=0$ then we just let
$\vartheta(1)=\vartheta(0)$.  We let
$(c^\circ_i(1))_{i\in\ZZ}$ denote the white contour
sequence of $\vartheta(1)$, indexed so that 
$c^\circ_i(1)=c^\circ_i(0)$ for all $i\geq0$. Thus the new white
vertices are placed immediately counterclockwise (in the contour sequence) 
from our starting point $c_0^\circ(0)$.

We now proceed to the next white vertex
$c^\circ_{-1}(1)$ in the counterclockwise contour sequence
of $\vartheta(1)$, and repeat
the same procedure, with $\eps_1$ and $\xi^\bullet_1$
replaced by $\eps_2$ and $\xi^\bullet_2$, respectively.
Note that $c^\circ_{-1}(1)$ is
\begin {itemize}
 \item  the same vertex as $w$ if $\eps_1 = 1$ and $\xi^\bullet_1=0$, or
 \item one of the white vertices just added if $\eps_1 = 1$ 
and $\xi^\bullet_1 >0$, or
 \item  another vertex belonging to the 
template $\vartheta(0)$ if $\eps_1 = 0$.
\end {itemize}
 We thus
obtain a new mobile $\vartheta(2)$ and a new white contour
sequence $(c^\circ_i(2))_{i\in\ZZ}$, which we index so that 
$c^\circ_i(2)=c^\circ_i(1)$ for all $i\geq-1$.  

The procedure
is then carried out inductively.  In this way we obtain 
(in the limit) a mobile whose counterclockwise contour sequence,
started at $w$, agrees in distribution with that
of $\vartheta$.   
We now explain how this construction, started at
the white vertex $w$, allows us to bound the degree
of $w$ in $M$.  By the BDG bijection, the neighbours
of $w$ in $M$ are either successors of $w$, or
have $w$ as a successor.  The successors of
$w$ are easy to count in the procedure above:
each new visit to $w$ corresponds to exactly
one successor.  For a normal white vertex $w$
the number of visits to $w$ has the law 
of $1+\xi^\circ$, while for a special white
vertex the number of visits is the sum of
two independent copies of $1+\xi^\circ$.
(The first corresponding to the visits
to $w$ on the `left side' of the spine, and
the second to the visits on the `right side'.)
Thus, the successors of $w$ account for the
`$1+$' in the summand of~\eqref{degbd_1}.

It remains to count the number of times $w$ 
appears as the successor of other white vertices.  
If $v$ has $w$ as a successor we will call $v$
a \emph{predecessor} of $w$, and we count the predecessors
of $w$ with multiplicity.
By shifting the labels
we may assume that $w$ has label 0, and hence the
predecessors all have label 1.  We  group the 
predecessors $v$ of $w$ by how many times we have
visited  $w$ before we visit $v$.
(This is well-defined as we will never visit
$w$ between visits to $v$.)
The numbers of predecessors in the various groups
are independent, and we claim that the number of
predecessors in each group is stochastically
bounded by $\zeta$.  This will
establish~\eqref{degbd_1}.

In order to establish the claim we define a certain
`stopping event' $A$. 
Suppose at stage $i\geq2$ in the construction
we visit a white vertex $v$ with label
$1$.  Thus $v$ is a potential predecessor of $w$.  
Let $A_i$ be the event that
(i) $\eps_i=1$, (ii) $\xi_i^\bullet=r\geq1$, and (iii)
$X_{r+1}^{(r)}\geq1$.  If this event occurs, then the next vertex
visited in the construction is a recently added vertex with
label 0 or less.  Thus, until we visit $w$ again, any
white vertex we visit which has label 1 \emph{cannot}
have $w$ as successor: there must be a vertex
with label 0 occurring before $v$ in the (clockwise)
contour sequence.  The number of `attempts'
before an event $A_i$ occurs has the distribution of
$\zeta$ (although in general we may get fewer predecessors
in a group since we may return to $w$ before there
is a successful `attempt').  This proves that the
bound~\eqref{degbd_1} applies to any white vertex $w$ in the
`template' $\vartheta(0)$, in particular to $\varnothing$.

Before proceeding to the case $\epsilon=+1$
we note that, although we have assumed that $w$ is a white vertex on or
adjacent to the spine,  it is clear that each time a
white vertex is visited for the first time in the construction above, 
the exact same procedure starts
afresh at that vertex.  Thus we may speak of 
starting the construction at an
\emph{arbitrary} white vertex $w$ of $\vartheta$.
Such a vertex is what was referred to above as a vertex in a `fixed
position in $\vartheta$', and the bound~\eqref{degbd_1}
applies to the degree of any such vertex.  
We will not describe formally what we mean
by `fixed position', but essentially the construction above may be
translated into a deterministic coordinate system in which each white
vertex $w$ has a fixed coordinate.  If $\epsilon=+1$ the root has a
random position in this coordinate system and we require an additional
argument to bound its degree, which we now describe.

We now show that the degree of $e_-$ is bounded 
by~\eqref{degbound_eq} when $\epsilon=+1$, that is when
$e_-$ is the successor of $\varnothing$.
Our description for this case will be slightly less
detailed.
Again we start with the template $\vartheta(0)$.
Denote the half-edge attached to $\varnothing$ by $h$, 
and note that the predecessors of $e_-$ fall into the 
following three categories depending on their
location in $\vartheta$:
\begin{enumerate}
\item\label{rootcase1} those between $h$ and the first occurrence of
$e_-$ in the clockwise contour sequence,
\item\label{rootcase2} those between $h$ and the first occurrence of
a vertex labelled $-1$ in the counterclockwise 
contour sequence, and
\item\label{rootcase3} those which are descendants of
$e_-$ in $\vartheta$.
\end{enumerate}
(The vertex labelled $-1$ in case \eqref{rootcase2} may
be $e_-$ itself if it is a special vertex of $\vartheta$.)
To bound the number of predecessors in category~\eqref{rootcase2}
we may apply a similar scheme as above, constructing the part
of $\vartheta$ counterclockwise from $h$ until the stopping
event $A$ occurs.  That is, each time we encounter a white
vertex $v$ labelled 0 we add this to the list of possible
predecessors of $e_-$, stopping if $v$ has a black child
$u$ of outdegree $r\geq1$ and the first child of
 $u$ in the counterclockwise order has label $-1$ or less.
Thus category~\eqref{rootcase2} has size dominated
by $\zeta$.

To bound category~\eqref{rootcase1} we apply a similar 
scheme, but proceed in the \emph{clockwise} order
starting at $h$.  Each time we visit a white vertex
$v$ it has `another' black child $u$ with probability 
$1-\pi_0$, in which case we sample the outdegree
$r$ of $u$ and then the label increments 
$(X^{(r)}_1,\dotsc,X^{(r)}_{r+1})$ around $u$.  This time we
stop if:  $v$ has label 0, $r\geq1$, and 
$X^{(r)}_1=-1$.  Again, the number of 
white vertices labelled 0 that we encounter before stopping 
is geometrically distributed, 
independent of our bound in case~\eqref{rootcase2},
but this time with parameter $p'$.
Note that we do not stop before encountering $e_-$ in
the clockwise contour sequence, hence the number
in category~\eqref{rootcase1} is dominated by $\zeta'$.

Having thus, in the course of case~\eqref{rootcase1},
located $e_-$, we note that some of the predecessors
in category~\eqref{rootcase3} may have already
been counted in our bounds for 
cases~\eqref{rootcase1} and~\eqref{rootcase2}.
However, we obtain an upper bound on 
category~\eqref{rootcase3} if we assume this not to
be the case.  We may apply a similar argument
as when $e_-=\varnothing$ to deduce that
the number in category~\eqref{rootcase3} is
dominated by $\sum_{i=1}^{\xi_1^\circ+\xi_2^\circ+1}\zeta_i$.
(Here the number of groups to be considered
is dominated by $\xi_1^\circ+\xi_2^\circ+1$ since
one group was already covered by 
cases~\eqref{rootcase1}--\eqref{rootcase2}.)
Finally, taking into account that $e_-$ has
$\xi_1^\circ+\xi_2^\circ+2$ successors,  we arrive
at the bound~\eqref{degbound_eq}.
\end{proof}

\section{The spectral dimension when $\kappa<1$} \label{s:rw}
In this section we focus on the case when $\kappa < 1$, i.e.~when 
$M$ has a face of infinite degree. 
Let $\vartheta = (\tilde{\T},\ell)$ be the  mobile
with distribution $\tilde\mu$, and 
$\rmap=\Phi(\vartheta,\epsilon)$ the corresponding map.
(See Sections~\ref{ss:tildenu}--\ref{ss:labelledmobiles} 
for the definition of $\vartheta$
and Section~\ref{ss:bdg} for $\Phi$.)
By Theorem~\ref{thm:locallimit}, the law of $M$
is $\mu$.   Denote the graph metric
of $\rmap$ by $d$.   The
black vertex of infinite degree in $\vartheta$ will be denoted by $s$
as before.
We will use recent results by Kumagai and 
Misumi~\cite{kumagai:2008} which allow us to calculate the spectral dimension
of $M$ and thereby prove Theorem \ref{th:spec}. Their methods involve
establishing suitable bounds on resistance and volume growth. 
Shortly we will give the necessary definitions
and state the results we need from \cite{kumagai:2008},
but here is a rough outline of the argument.  

Intuitively, our arguments rely on showing that
the resistance and volume growth in $\rmap$ are governed
by $s$, in the sense that if we truncate $\vartheta$
by removing everything except $s$ and its white
nearest neighbours, then the volume and
resistance growth are largely unaffected.  The map
associated via the BDG bijection with the truncated mobile
is the UIPTree (cf.~Theorem~\ref{thm:locallimit}) so the
spectral dimension of $\rmap$ should be that of the UIPTree,
which is 4/3.  For our argument to work, apart from
the main assumption
$\mathbb{E}(\xi) = \kappa<1$ we also need the technical assumption that there exists a $\beta > 5/2$ such that $\mathbb{E}(\xi^\beta) < \infty$. We will assume these properties to hold in the remainder of this section unless otherwise stated.

We begin by discussing some results that allow us to make
precise the idea that the vertex of infinite degree
`dominates' the structure of the mobile $\vartheta$.

\subsection{Decorations}
\label{ss:dec}

Recall that the neighbours of $s$ in
$\vartheta$ are denoted $(s_i)_{i\in \mathbb{Z}}$,
and ordered as in Section~\ref{ss:pt}.
Thus $s_0$ is the parent of $s$, and hence also
the last vertex on the finite spine of $\vartheta$.
For $i\neq0$, the vertex $s_i$ is the root of a subcritical
(modified) Galton--Watson tree
consisting of the descendants of $s_i$;  we call these
trees \emph{decorations} and denote them by $\Dec_i$.
Also $s_0$ may be viewed as the root of a tree,
consisting of all vertices which are not
descendants of $s_0$.  This tree consists of
a spine of geometrically distributed
length with subcritical Galton--Watson trees
attached to it, see Section~\ref{ss:tildenu}.  
We denote this tree by $\Dec_0$
and call it the \emph{bad} decoration
since it is considerably larger than
the other decorations. If $v \in V^\circ(\Dec_i)$ we will 
write $v^\star = s_i$ and $\Dec(v^\star) =\Dec(v) = \Dec(s_i) = \Dec_i$. 

Denote the number of vertices in decoration $i$ by $|\Dec_i| = |V(\Dec_i)|$.
The first lemma explains how the moment condition 
on $\xi$ provides corresponding moment conditions on 
$|\Dec_i|$. A proof is given in the Appendix.
\begin {lemma} \label{l:ximoment}
If $\mathbb{E}(\xi^r) < \infty$ for some $r \geq 1$ 
then $\mathbb{E}(|\Dec_i|^r) < \infty$ for $i\neq 0$ 
and $\mathbb{E}(|\Dec_0|^{r-1})< \infty$.
\end {lemma}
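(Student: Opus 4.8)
The plan is to reduce both estimates to a single statement about the total progeny of a \emph{single}-type subcritical Galton--Watson process, and then to read off the loss of one power in the $\Dec_0$ bound from a size-biasing. The key ingredient, which I regard as the main obstacle, is the following: if $\chi$ is an offspring variable with $\mathbb{E}(\chi) = m < 1$ and $\mathbb{E}(\chi^r) < \infty$, then its total progeny $Z$ satisfies $\mathbb{E}(Z^r) < \infty$ with the \emph{same} exponent $r$. I would prove this from the fixed-point recursion $Z = 1 + \sum_{i=1}^{\chi} Z_i'$, where the $Z_i'$ are independent copies of $Z$ independent of $\chi$. Since the upper tail of $Z$ is carried either by one large sub-progeny or by a large root offspring, one obtains (after some work) a renewal-type inequality $\mathbb{P}(Z > x) \leq m\,\mathbb{P}(Z > (1-\epsilon)x) + g(x)$, in which the remainder satisfies $\int_0^\infty x^{r-1} g(x)\,\mathrm{d}x < \infty$ exactly because $\mathbb{E}(\chi^r) < \infty$. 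Iterating and summing the geometric series in $m(1-\epsilon)^{-r}$, which converges once $\epsilon$ is small enough that $m(1-\epsilon)^{-r} < 1$, then yields $\mathbb{E}(Z^r) < \infty$. The delicate point is precisely the preservation of the exponent: rounding $r$ up to $\lceil r\rceil$, which would permit a cruder factorial-moment computation from the identity $G(s) = sF(G(s))$ (with $F,G$ the offspring and total-progeny generating functions), is far too lossy for the later application with $\beta > 5/2$. The same conclusion can alternatively be extracted from known results on subexponential random sums.

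Before applying this to the decorations I would record two elementary facts. First, the hypothesis passes to both offspring types: $\xi^\circ$ is geometric and so has moments of every order, while $\mathbb{E}((\xi^\bullet)^r) = (1-\pi_0)^{-1}\sum_i i^r \pi_{i+1} \leq (1-\pi_0)^{-1}\mathbb{E}(\xi^r) < \infty$. Second, for nonnegative i.i.d.\ $(Y_j)$ and an independent counting variable $N$, Minkowski's inequality gives the random-sum bound $\mathbb{E}\big((\sum_{j=1}^{N} Y_j)^p\big) \leq \mathbb{E}(N^p)\,\mathbb{E}(Y_1^p)$ for every $p \geq 1$, so such a sum has a finite $p$-th moment as soon as both factors do. These two facts, together with the key lemma, are all that the remaining steps need.

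For $i \neq 0$ the decoration $\Dec_i$ is an ordinary two-type Galton--Watson tree rooted at the normal white vertex $s_i$, with white offspring $\xi^\circ$, black offspring $\xi^\bullet$, and mean product $\mathbb{E}(\xi^\circ)\mathbb{E}(\xi^\bullet) = \tilde\kappa < 1$. I would bound its two colour classes separately. Collapsing on the black vertices, a black vertex has black ``grandchildren'' distributed as $\chi = \sum_{j=1}^{\xi^\bullet}\xi^\circ_j$, with mean $\tilde\kappa$ and finite $r$-th moment by the random-sum bound; then $|V^\bullet(\Dec_i)|$ is a sum of $\xi^\circ$ independent total progenies of this single-type process, each with finite $r$-th moment by the key lemma, so $\mathbb{E}(|V^\bullet(\Dec_i)|^r) < \infty$. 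Collapsing instead on the white vertices, the root $s_i$ generates a single-type process with offspring $\chi' = \sum_{k=1}^{\xi^\circ}\xi^\bullet_k$, again subcritical with finite $r$-th moment, whose total progeny is exactly $|V^\circ(\Dec_i)|$, giving $\mathbb{E}(|V^\circ(\Dec_i)|^r) < \infty$. Since $|\Dec_i| = |V^\bullet(\Dec_i)| + |V^\circ(\Dec_i)|$, this yields $\mathbb{E}(|\Dec_i|^r) < \infty$.

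Finally, for $\Dec_0$ I would use the spine decomposition of Section~\ref{ss:tildenu}: $\Dec_0$ is the finite spine, whose length is geometric and hence has all moments, together with the outgrowths attached to its special vertices. Each outgrowth is a decoration of the kind just treated and so has finite $r$-th moment size; the only new phenomenon is that the \emph{number} of outgrowths is governed by the \emph{size-biased} offspring of the special spine vertices, and this is exactly what costs one power. Indeed, the white size-biased offspring $\hat\xi^\circ$ is a size-biased geometric and retains all moments, but the black size-biased offspring obeys $\mathbb{E}((\hat\xi^\bullet)^{r-1};\,\hat\xi^\bullet < \infty) = \pi_0^{-1}\sum_i i^{r}\pi_{i+1} < \infty$ while its $r$-th moment is in general infinite. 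Writing $|\Dec_0|$ as the spine length plus a random sum of outgrowth sizes indexed by this (finite $(r-1)$-th moment) number of outgrowths, and applying the random-sum bound with $p = r-1$, gives $\mathbb{E}(|\Dec_0|^{r-1}) < \infty$; the size-biasing shows this is the best exponent available.
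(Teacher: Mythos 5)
Your proposal follows the same skeleton as the paper's proof: everything is reduced to the statement that the total progeny of a subcritical Galton--Watson process whose offspring law has a finite $r$-th moment itself has a finite $r$-th moment with no loss of exponent, and the $\Dec_0$ bound is obtained by decomposing along the spine, where the size-biased offspring law of the spine vertices is exactly what costs one power. The differences are in execution, and one of them matters. The paper works on the simply-generated-tree side of the bijection $\Psi$, so that for $i\neq 0$ one has $|\Dec_i|\eqd N$ with $N$ the total progeny of a single-type Galton--Watson tree with offspring $\xi$; it then writes $N$ as the first passage time to level $-1$ of the Lukasiewicz path $\sum_{j=1}^{k}(\xi_j-1)$, a random walk with negative drift, and invokes Gut's theorem on stopped random walks to conclude $\E(N^r)<\infty$. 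You stay on the mobile side and collapse the two-type tree onto each colour class, which is fine, but you then propose to prove the crucial total-progeny moment bound by iterating a tail inequality $\P(Z>x)\le m\,\P(Z>(1-\eps)x)+g(x)$. The iteration and the geometric series are sound, but the entire difficulty is concentrated in the unproved claim that $\int_0^\infty x^{r-1}g(x)\,\mathrm{d}x<\infty$: here $g(x)$ is the probability that $\sum_{i\le\chi}Z_i'$ exceeds $x$ with no single summand exceeding $(1-\eps)x$, and the natural one-big-jump estimates for this event already presuppose tail control on $Z$ of the kind being established, so a genuine truncation or bootstrap argument is required that your sketch does not supply. Since the statement is true and is precisely what the first-passage-time representation delivers, you should either adopt that route or replace ``after some work'' by a precise citation.

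A second, smaller gap: your random-sum inequality $\E\bigl(\bigl(\sum_{j=1}^{N}Y_j\bigr)^p\bigr)\le\E(N^p)\,\E(Y_1^p)$ is derived from Minkowski and hence valid only for $p\ge 1$, yet in the $\Dec_0$ step you apply it with $p=r-1$, which lies in $(0,1)$ whenever $1<r<2$; this range is genuinely needed, since Lemma~\ref{l0} and Lemma~\ref{lem:uvol} invoke the present lemma for arbitrary $r>1$. For $0<p\le 1$ one must instead use the subadditivity bound $\bigl(\sum_j a_j\bigr)^p\le\sum_j a_j^p$ together with Wald's identity and Jensen, which is exactly the two-case treatment of the exponent $s=r-1$ carried out in the paper's proof.
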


The next lemma is from \cite[Lemma 2.1]{janson:2012sl} and 
relates the maximum displacement of labels in a decoration 
to the number of vertices it contains.
\begin {lemma} \label{l:deltal}
Write
\begin{equation}
\Delta\ell(\Dec_i) = 
\max_{y \in V^\circ(\Dec_i)}|\ell(s_i) - \ell(y)|.
\end{equation}
For any $r>0$ there is a constant $C(r)$ such that
\begin {equation}
\mathbb{E}(\Delta\ell(\Dec_i)^r\mid \Dec_i) 
\leq C(r) |\Dec_i|^{r/2}. 
\end {equation}
\end {lemma}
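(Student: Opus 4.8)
The plan is to reduce the maximal displacement of labels across $\Dec_i$ to the maximal fluctuation of a one-dimensional \emph{label contour process}, and then to control that fluctuation by a maximal inequality after identifying its increments with the discrete bridges of Section~\ref{ss:mobiles}. First I would condition on the unlabelled tree $\Dec_i$ and write $N=|\Dec_i|$. Reading the white contour sequence of $\Dec_i$ started from $s_i$ (so that $s_i=c^\circ_0$ and $\ell(s_i)=\ell(c^\circ_0)$), one has for every white vertex $y$ and any corner $J$ of $y$ that $\ell(y)-\ell(s_i)=W_J$, where $W_J:=\sum_{j=0}^{J-1}\delta_j$ and $\delta_j:=\ell(c^\circ_{j+1})-\ell(c^\circ_j)$. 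Since the white contour sequence visits every white vertex, this gives the reduction
\[
\Delta\ell(\Dec_i)\le \max_{0\le J\le N}\bigabs{W_J}.
\]

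The next step is to describe the increments. By~\eqref{labels} and the description below~\eqref{bridgelaw}, each $\delta_j$ is a single step of the discrete bridge attached to the black vertex incident to both $c^\circ_j$ and $c^\circ_{j+1}$; the bridge around a black vertex of degree $r$ is distributed as $(X_1,\dots,X_r)$ conditioned on $\sum_i X_i=0$, where the $X_i$ are independent with law~\eqref{bridgelaw}, and bridges around distinct black vertices are independent. Thus each increment is centred, bounded below by $-1$ (rule~\eqref{e:labelinc}), and has finite exponential moments; moreover $W$ is a bridge, i.e.\ it returns to $0$, since the contour returns to $s_i$.

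The core estimate is then
\[
\E\bigsqpar{\max_{0\le J\le N}\bigabs{W_J}^{\,r}\,\big|\,\Dec_i}\le C(r)\,N^{r/2},
\]
which by monotonicity of $L^p$-norms it suffices to prove for even integers $r$ (and then to transfer to arbitrary $r>0$). Because the bridge constraint keeps $W$ from being a martingale, I would first treat the \emph{unconditioned} analogue $\tilde W$ obtained by replacing every bridge with an i.i.d.\ walk of law~\eqref{bridgelaw}. Then $\tilde W$ is a centred random walk of length $\asymp N$ whose increments have bounded variance and exponential moments, so a Burkholder--Davis--Gundy (or Doob) maximal inequality, together with the quadratic-variation bound $\E[\langle\tilde W\rangle^{r/2}\mid\Dec_i]\le C(r)N^{r/2}$, yields $\E[\max_J\bigabs{\tilde W_J}^{\,r}\mid\Dec_i]\le C(r)N^{r/2}$.

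The main obstacle is transferring this bound to the true process $W$, whose increments are negatively correlated through the sum-zero constraint of each bridge and are moreover interleaved by the recursive subtree explorations of the contour. I would handle this by comparison: the maximal fluctuation of each individual bridge is dominated, in every moment and up to an absolute constant, by that of the corresponding unconditioned walk — via the cyclic-shift (Vervaat) representation of bridges, or via a local-limit bound giving that a length-$r$ bridge returns to $0$ with probability $\ge c/\sqrt r$ — and I would assemble these block-wise dominations, using the independence of distinct bridges, into a domination of $\max_J\bigabs{W_J}$ by a constant multiple of $\max_J\bigabs{\tilde W_J}$. A secondary subtlety is that $\Dec_i$ may contain a black vertex of degree comparable to $N$; since the block-wise comparison is uniform in the degree, a single long bridge does not disturb the $N^{r/2}$ scaling, which completes the plan.
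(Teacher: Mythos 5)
First, a point of orientation: the paper does not actually prove this lemma — it is quoted verbatim from \cite[Lemma~2.1]{janson:2012sl} — so there is no internal proof to compare against. Judged on its own terms, your Steps 1 and 2 are sound: the labels read along the white contour sequence of $\Dec_i$ form exactly the interleaved partial-sum process of the bridges, the maximum of that process equals $\Delta\ell(\Dec_i)$, the contour has length of order $|\Dec_i|$, and for the unconditioned version $\tilde W$ (i.i.d.\ centred increments with the law~\eqref{bridgelaw}, which have all moments) a Doob/Rosenthal or BDG argument does give $\E\bigl(\max_{J\le N}|\tilde W_J|^r\mid\Dec_i\bigr)\le C(r)N^{r/2}$.

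The gap is in your transfer step, and it is not cosmetic. Knowing that, for each black vertex $u$ separately, the maximal partial sum of its bridge is dominated (in distribution, or in every moment) by that of an unconditioned walk does not let you ``assemble'' a domination of $\max_J|W_J|$ by a constant multiple of $\max_J|\tilde W_J|$: writing $W_J=\sum_u B^u_{k_u(J)}$ as an interleaved sum of block partial sums, the global maximum is not a monotone function of the individual block maxima (it depends on signs and on whole trajectories), so no stochastic domination follows from block-by-block comparisons without an explicit coupling of the full trajectories — and none is provided. The other natural route, using that each bridge has density at most $C\sqrt{d_u}$ with respect to the free walk, also fails globally, since applying it to all blocks at once produces the unbounded factor $\prod_u C\sqrt{d_u}$. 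The standard repair in the labelled-mobile literature avoids any global walk-versus-bridge coupling: one bounds moments of \emph{individual} bridge partial sums directly, $\E|x_1+\dots+x_j|^{2q}\le C_q\,(j\wedge(d-j))^{q}$ (one application of the local limit estimate per partial sum, using $\P(S_d=0)\ge c/\sqrt d$), then applies Rosenthal's inequality across the independent blocks lying on the tree path between two white corners to get the increment bound $\E\bigl(|W_t-W_s|^{2q}\mid\Dec_i\bigr)\le C_q|t-s|^{q}$, and finally invokes a maximal inequality for processes with such increment moments (M\'oricz--Serfling--Stout, or a chaining argument) to conclude $\E\bigl(\max_J|W_J|^{2q}\mid\Dec_i\bigr)\le C_qN^{q}$. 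Note that replacing this last step by a union bound over vertices loses a factor of $N$ and does not recover the stated exponent, so the maximal-inequality step genuinely has to be done carefully; as written, your proposal does not yet contain a correct version of it.
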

For easier notation we will sometimes write 
$\Delta\ell_i = \Delta\ell(\Dec_i)$.

\subsection{Resistance and volume}

As mentioned above we will prove Theorem~\ref{th:spec}
by proving bounds on volume and resistance growth
in $M$ and then appealing to the results 
of~\cite{kumagai:2008}.  We first recall the basic 
definitions of electrical networks, 
see e.g.~\cite{lyons-peres} for more details.

Let $G=(V,E)$ be a locally finite graph, with vertex set $V$ and edge
set $E$.  Eventually we will consider $G = M$. 
A \emph{resistance} is a function
$r\colon E\rightarrow[0,\infty]$, and the 
resistance of an edge $e$ will be written as $r_e = r(e)$. The associated
\emph{conductance} function $c\colon E\rightarrow[0,\infty]$
is given by $c_e = c(e)=1/r_e$.  
(The case when some of the $c_e$ are infinite
can be reduced to the case when all the $c_e$ are
finite by identifying adjacent vertices $x, y$ such that
$c_{xy}=\infty$.)
For the conductance $c$ fixed, and
any function $f\colon V\rightarrow\RR$, we define the
\emph{Dirichlet energy}
\begin{equation}
\cE(f):=\sum_{xy\in E} c_{xy}(f(x)-f(y))^2
\end{equation}
(where $\infty\cdot0=0$).
For two disjoint sets $A,B\subseteq V$ we define the
\emph{effective resistance} $\Reff_{(G,c)}(A,B)$
by
\begin{equation}\label{reff_def_eq}
\Reff_{(G,c)}(A,B)^{-1}:=\inf\{\cE(f): f(a)=1\;\forall a\in A,
f(b)=0\;\forall b\in B\}.
\end{equation}
In the case when all $c_e=1$ we also write
$\Reff_G$ for the effective resistance.
We write $\Reff_G(\{x\},\{y\})=\Reff_G(x,y)$ etc.

Let $\vartheta^\star$ be the
truncated mobile obtained from $\vartheta$ by
throwing away all vertices except $s$ and 
$(s_i)_{i\in\mathbb{Z}}$,
keeping the labels of these vertices. 
Let $M^\star = \Phi(\vartheta^\star,\epsilon)$ and
note that $M^\star$ is the UIPTree. The
directed root edge in $M^\star$ obtained by the BDG construction will
be denoted by $(\uiptroot,\uiptroot1)$ and we will use the convention that
$\uiptroot$ is the root of $M^\star$ and 
that $\uiptroot1$ is the leftmost
child of $\uiptroot$. The infinite spine in $M^\star$ will be denoted by $S^\star$ and the vertex in $S^\star$ at a distance $i$ from $\uiptroot$ will be denoted by $S^\star_i$. Denote the graph metric on $M^\star$ by
$d^{\star}$. Note that in the case $\kappa = 0$ we have $M = M^\star$
almost surely.

The results of~\cite{kumagai:2008}
are formulated in terms of an arbitrary metric on the vertex set of the graph under consideration
(not necessarily the graph metric).  Recall that we identify $V(M)$
with $V^\circ(\tilde{\T})$.  We will be using the 
 metric $\dsub$ on  $V^\circ(\tilde{\T})$  defined by
\begin {equation}\label{dsub_def_eq}
\dsub(u,v) = d^\star(u^\star,v^\star) + 
(1-\delta_{u,u^\star}) +
(1-\delta_{v,v^\star}),
\end {equation}
where $\delta_{a,b}$ is the Kronecker delta.  Denote by $M^\#$ the
graph whose vertex set is $V^\circ(\tilde\T)$
and with edges between vertices at $d^\#$-distance one.
It is clear that $M^\#$ is a tree and that it contains
$M^\star$ as a subtree. 
However, $M^\star$ is \emph{not} in general
a subgraph of $M$.
In the following, we will take $\uiptroot$ to be
the reference vertex in $M$, $M^\star$ and $M^\#$ when referring to
graph balls and resistance ($\uiptroot$ corresponds to the vertex 0
defined in \cite[above Eq.~(1.3)]{kumagai:2008}).

For $v \in V^\circ(\tilde\T)$, let $\omega(v)$ be the number of edges adjacent to $v$ in $M$ and extend $\omega$ to a measure on $V^\circ(\tilde\T)$. 
Define
\begin {equation}
 B(R;\dsub) = \{v \in V^\circ(\tilde\T)~:~ \dsub(\uiptroot,v) < R\},
\end {equation}
and write $\omega(R)$ for $\omega(B(R;\dsub))$.

We will be using the following result 
from~\cite{kumagai:2008}.  
For each $\l>1$ define the random set
\begin{equation}
\begin{split}
J(\l)=\{R\in[0,\infty]\colon &
\l^{-1}R^2\leq \omega(R)\leq \l R^2,\\
&\Reff_M(\uiptroot,B(R;\dsub)^c))\geq \l^{-1}R,\\
&\exists y\in B(R;\dsub):\Reff_M(\uiptroot,y)\leq \l \dsub(\uiptroot,y)
\}.
\end{split}
\end{equation}
By~\cite[Theorem~1.5]{kumagai:2008},
if there are $\l_0>1$ and $c,q>0$ such that
\begin{equation}\label{jprob_eq}
\P(J(\l)\ni R)\geq 1-c\l^{-q},\mbox{ for all }R\geq1, \l\geq\l_0,
\end{equation}
then $\specdim(M)=4/3$ almost surely.

Here is an outline of the rest of this section.  
To prove~\eqref{jprob_eq} we will
treat each of the four inequalities defining $J(\l)$
separately, in a sequence of lemmas.  Bounds on the
volume $\omega(R)$ will be treated in Section~\ref{ss:vol}
(Lemmas~\ref{lem:uvol} and~\ref{lem:lvol}).
In Section~\ref{ss:ures} we establish an upper bound
on the probability that 
$\exists y \in B(R;\dsub): \Reff_M(\uiptroot,y)> \l \dsub(\uiptroot,y)$
(Lemma~\ref{lem:ures}).  Finally, in Section~\ref{ss:lres}
we deal with the hardest part of the argument, which is 
to establish an upper bound on the probability that
$\Reff_M(\uiptroot,B(R;\dsub)^c))< \l^{-1}R$
(Lemma~\ref{lem:lres}).  For this result we will use a
technique of `projecting long bonds',
inspired by methods previously applied to
 long-range percolation models in~\cite[Proposition~2.1]{kumagai:2008}
and~\cite[Lemma~3.8]{berger:2002}.

\subsection{Bounds on the volume}
\label{ss:vol}

We begin with the upper bound on the volume. 
We will need two preliminary lemmas. First we state the 
following result on the
labels in $\vartheta^\star$ which will also be used
in the bounds on resistance growth. A proof is given in the Appendix.

\begin{lemma}\label{kmt_lem}
Assume $\ell(s_0) = 0$ and define
\begin {eqnarray}
 i^+(R) &=& \inf\{i \geq 0~:~\ell(s_i) = -R\} \quad \text{and}\\
i^-(R) &=& \inf\{i \geq 0~:~\ell(s_{-i}) \leq -R\}. 
\end {eqnarray}
There is a constant $C>0$ such that for all
$R,\l\geq1$ we have
\[
P(i^\pm(R)>\l R^2)\leq C\log (\l)\l^{-1/2}.
\]
\end{lemma}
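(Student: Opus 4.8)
The plan is to recognise the label sequences along $(s_i)_{i\ge0}$ and $(s_{-i})_{i\ge0}$ as mean-zero random walks and then apply a strong (Komlós--Major--Tusnády) approximation by Brownian motion. First I would observe that, by the construction of $\vartheta$ in Section~\ref{ss:labelledmobiles} and below~\eqref{labels}, the label increments around $s$ are independent with the law~\eqref{bridgelaw}. A direct computation gives $\mathbb{E}(X_j)=\sum_{k\ge-1}k\,2^{-k-2}=0$ and $\mathrm{Var}(X_j)=2$, and since the law has geometric tails it has finite exponential moments. Reading the clockwise contour around $s$ as the linear order $\dots,s_{-1},s_0,s_1,\dots$, the sequence $(\ell(s_i))_{i\ge0}$ is a random walk whose increments are $\ge-1$, so it decreases by at most $1$, while $(\ell(s_{-i}))_{i\ge0}$ is a random walk whose increments are $\le1$. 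In the first case the walk cannot overshoot downwards, so $i^+(R)$ is exactly the first passage below $-R$ and $\{i^+(R)>n\}=\{\min_{k\le n}\ell(s_k)>-R\}$; in the second case directly $\{i^-(R)>n\}=\{\min_{k\le n}\ell(s_{-k})>-R\}$. Thus both bounds reduce to one statement: for a mean-zero, finite-variance random walk $(W_k)$ with exponential moments, $\mathbb{P}(\min_{k\le n}W_k>-R)\le C\log(\lambda)\lambda^{-1/2}$ with $n=\lfloor\lambda R^2\rfloor$.

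To prove this I would invoke the strong approximation: since the increments have exponential moments, there is a coupling of $(W_k)$ with a Brownian motion $B$ of variance $\sigma^2=\mathrm{Var}(X)=2$ such that $\mathbb{P}(\max_{k\le n}|W_k-\sigma B_k|>C\log n+x)\le C'e^{-cx}$ for all $x$. Off the exceptional event, the staying-above event $\{\min_{k\le n}W_k>-R\}$ forces $\sigma B_k>-(R+C\log n+x)$ at every integer time $k\le n$. I would then bound the probability that $\sigma B$ stays above $-a$ at all integer times by the continuous first-passage probability $\mathbb{P}(\inf_{[0,n]}\sigma B>-a')\le \sqrt{2/\pi}\,a'/(\sigma\sqrt n)$, which follows from the reflection principle together with the crude density bound $2\Phi(u)-1\le\sqrt{2/\pi}\,u$, after correcting for discretisation by the event that $B$ oscillates by more than $M$ inside some unit interval; a reflection-principle union bound controls that event by $4n\,e^{-M^2/(8\sigma^2)}$, which is negligible once $M\asymp\sqrt{\log n}$.

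Finally I would balance the parameters. Choosing $x$ to be a fixed multiple of $\log\lambda$ makes the approximation error $C'e^{-cx}$ at most $\lambda^{-1/2}$, while in the Gaussian estimate with $a'=R+O(\log n+\sqrt{\log n}+x)$ and $n=\lambda R^2\ge1$ the leading contribution is $a'/(\sigma\sqrt n)=\sigma^{-1}\lambda^{-1/2}\bigl(1+O((\log\lambda)/R+(\log R)/R)\bigr)$; using $R\ge1$ this is at most $C\log(\lambda)\,\lambda^{-1/2}$. Summing the three contributions (approximation error, Gaussian main term, oscillation term) yields the claimed estimate. The main obstacle is not any single bound but the bookkeeping in this last balancing step: the factor $\log\lambda$ in the statement arises precisely from feeding the approximation error (of size $\log n=\log\lambda+2\log R$) back into the \emph{linear} Gaussian first-passage bound, and one must verify that the $\log R$ contributions are absorbed uniformly in $R\ge1$ rather than accumulating.
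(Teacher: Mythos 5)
Your proposal is correct and follows essentially the same route as the paper: identify $(\ell(s_i))$ as a mean-zero, variance-$2$ random walk with increments from~\eqref{bridgelaw} (with no downward overshoot for $i^+$), apply the Koml\'os--Major--Tusn\'ady coupling with Brownian motion, bound the staying-above probability via the reflection principle, and take the coupling error parameter of order $\log\lambda$ to produce the $\log(\lambda)\lambda^{-1/2}$ bound. The only difference is cosmetic: the paper couples the linearly interpolated walk so the continuous-time supremum is controlled directly, whereas you couple at integer times and add an oscillation correction of order $\sqrt{\log n}$, which is harmless.
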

The second lemma bounds the minimum label in $B(R;\dsub)$.
\begin {lemma} \label{l0}
Define
\begin {equation}
 m(R) = \min\Bigg\{\ell(v) ~:~ v \in \bigcup_{-i^-(R-1) < i \leq i^+(R)} V^\circ(\Dec_i) \Bigg\}.
\end {equation}
Let $r>1$ and define $\alpha = \min\{2r-2,2 r/3\}$.  If $\mathbb{E}(\xi^r) < \infty$ then there is a constant $c>0$ and a $\lambda_0 > 1$ such that
\begin {equation}
 \P(|m(R)| > \lambda R) < c \log(\lambda)\lambda^{-\alpha}
\end {equation} 
for every $R\geq 1$ and $\lambda > \lambda_0$.

\end {lemma}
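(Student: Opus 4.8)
The plan is to reduce the bound on $m(R)$ to a tail bound on the maximal label displacement $\max_i \Delta\ell_i$ over the decorations indexed by the range $-i^-(R-1)<i\leq i^+(R)$, and then to control that maximum by combining the moment estimates of Lemmas~\ref{l:ximoment} and~\ref{l:deltal} with the tail bound on $i^\pm(R)$ from Lemma~\ref{kmt_lem}.

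First I would establish a deterministic lower bound on $m(R)$. The key observation is that $\ell(\iv_i)\geq -R$ for every index $i$ in the range. Indeed, for $0\leq i\leq i^+(R)$ the clockwise label increments around $\iv$ are at least $-1$ by rule~(\ref{e:labelinc}), so $\ell(\iv_i)$ decreases by at most one per step and therefore stays $\geq -R$ until the first time it equals $-R$, namely at $i=i^+(R)$; for $-i^-(R-1)<i<0$ the definition of $i^-(R-1)$ together with integrality of the labels forces $\ell(\iv_i)\geq -(R-1)+1=-R+2$. Since every $v\in V^\circ(\Dec_i)$ satisfies $\ell(v)\geq \ell(\iv_i)-\Delta\ell_i$, and since $\iv_{i^+(R)}\in V^\circ(\Dec_{i^+(R)})$ has label $-R$ so that $m(R)\leq -R\leq 0$, I would conclude
\[
|m(R)|\leq R+\max_{-i^-(R-1)<i\leq i^+(R)}\Delta\ell_i,
\]
whence $\{|m(R)|>\lambda R\}\subseteq\{\max_i\Delta\ell_i>(\lambda-1)R\}$ with the maximum over the range.

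Next I would decompose this event according to the size of the range. Setting $\mu=\lambda^{4r/3}$, on the complement of $\{i^+(R)>\tfrac\mu2 R^2\}\cup\{i^-(R-1)>\tfrac\mu2 R^2\}$ the whole range lies in $\{|i|\leq\mu R^2\}$, and I would split off $i=0$ since the \emph{bad} decoration $\Dec_0$ carries a weaker moment bound. This gives
\[
\{|m(R)|>\lambda R\}\subseteq\{i^+(R)>\tfrac\mu2 R^2\}\cup\{i^-(R-1)>\tfrac\mu2 R^2\}\cup\{\Delta\ell_0>(\lambda-1)R\}\cup\Big\{\max_{1\leq|i|\leq\mu R^2}\Delta\ell_i>(\lambda-1)R\Big\}.
\]
The first two probabilities are each at most $C\log(\mu)\mu^{-1/2}=C'\log(\lambda)\lambda^{-2r/3}$ by Lemma~\ref{kmt_lem}. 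For the last term, Lemma~\ref{l:ximoment} gives $\E(|\Dec_i|^r)<\infty$ for $i\neq0$, so Lemma~\ref{l:deltal} with exponent $2r$ yields $\E(\Delta\ell_i^{2r})\leq C(2r)\E(|\Dec_i|^r)<\infty$; a union bound and Markov's inequality then give, for $R\geq1$ and $\lambda$ large,
\[
\P\Big(\max_{1\leq|i|\leq\mu R^2}\Delta\ell_i>(\lambda-1)R\Big)\leq C\mu R^2\,((\lambda-1)R)^{-2r}\leq C'\mu\lambda^{-2r}R^{2-2r}\leq C''\lambda^{4r/3-2r}=C''\lambda^{-2r/3},
\]
using $R^{2-2r}\leq1$. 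For the bad decoration, Lemma~\ref{l:ximoment} gives $\E(|\Dec_0|^{r-1})<\infty$, hence $\E(\Delta\ell_0^{2r-2})<\infty$ by Lemma~\ref{l:deltal}, so $\P(\Delta\ell_0>(\lambda-1)R)\leq C\lambda^{-(2r-2)}$. Summing the four contributions produces a bound of order $\log(\lambda)\lambda^{-\min\{2r/3,\,2r-2\}}=\log(\lambda)\lambda^{-\alpha}$.

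The conceptually straightforward but delicate point is the choice of the splitting exponent $\mu=\lambda^{4r/3}$, which balances the $\mu^{-1/2}$ decay from the spine-label fluctuations (Lemma~\ref{kmt_lem}) against the $\mu\lambda^{-2r}$ growth from maximizing over $\mu R^2$ decorations; the bad decoration independently contributes the competing exponent $2r-2$, and the worse of the two dictates $\alpha$. I expect the main obstacle to be purely bookkeeping — tracking constants and checking uniformity in $R\geq1$, which reduces to $R^{2-2r}\leq1$ for $r\geq1$ — rather than any genuine difficulty, since all the substantive input is already packaged in the three preceding lemmas.
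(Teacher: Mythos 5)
Your proposal is correct and follows essentially the same route as the paper: the same deterministic reduction $|m(R)|\leq R+\max_i\Delta\ell_i$, the same truncation of the index range via Lemma~\ref{kmt_lem}, the same separate treatment of the bad decoration $\Dec_0$ using the weaker moment $\mathbb{E}(|\Dec_0|^{r-1})<\infty$, and the same optimizing choice $\gamma=\lambda^{4r/3}$ balancing $\lambda^{-2r/3}$ against $\lambda^{-(2r-2)}$. The only cosmetic difference is that you use a union bound plus Markov on each $\Delta\ell_i$ where the paper applies Markov's inequality to the sum $\sum_i\Delta\ell_i^{2r}$; these are equivalent.
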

\begin {proof}
Define $N(x) = i^+(x) + i^-(x-1)-1$. We have for any $i$ satisfying $-i^-(R-1) < i \leq i^+(R)$ that
\begin {equation}
 \min\{\ell(v)~:~v\in V^\circ(\Dec_i)\} \geq \ell(s_i) - \Delta\ell_i \geq -R - \Delta\ell_i
\end {equation}
 and therefore
\begin {equation}
 |m(R)| \leq R + \max\{\Delta\ell_i~:~ 
-i^-(R-1) < i \leq i^+(R)\}. 
\end {equation}
Thus, for any $\gamma > 0$
\begin {align} \nonumber 
  \P(|m(R)| > \lambda R) &\leq \P\left(\max\{\Delta\ell_i^{2r}~:~ 0 \leq i \leq N(R)\} > ((\lambda -1)R)^{2r}\right) \nonumber \\
&\leq \P\Bigg(\sum_{i=0}^{\gamma R^2}\Delta\ell_i^{2r}> ((\lambda -1)R)^{2r}\Bigg)+\P(N(R)>\gamma R^2) . \nonumber \\ \label {mr}
\end {align}
In the first term in the last expression, we treat separately the contribution of the bad decoration and the sum of the others. For any $0 < \delta < 1$ the first term may be bounded by
\begin {align} \nonumber
 \P\Bigg(\sum_{i=1}^{\gamma R^2}\Delta\ell_i^{2r}> (1-\delta)((\lambda -1)R)^{2r}\Bigg) &+ \P(\Delta\ell_0^{2r-2} > \delta^{\frac{r-1}{r}} ((\lambda -1)R)^{2r-2}) \\
 &\leq \frac{\gamma \mathbb{E}(\Delta\ell_1^{2r})}{(1-\delta)(\lambda-1)^{2r}}+ \frac{\mathbb{E}(\Delta\ell_0^{2r-2})}{\delta^{\frac{r-1}{r}}(\lambda-1)^{2r-2} } 
\end {align}
 where we used Markov's inequality on both terms along with $R^{2-2r} \leq 1$. By Lemmas \ref{l:ximoment} and \ref{l:deltal}, both expected values in the last expression are finite since $\mathbb{E}(\xi^r) < \infty$. Applying Lemma \ref{kmt_lem} to the second term on the right hand side of \eqref{mr} we finally obtain
 \begin {equation} \label{mrfinal}
  \P(|m(R)| > \lambda R) \leq c_1 \gamma \lambda^{-2r}+c_2 \lambda^{-2r+2}+c_3 \log(\gamma)\gamma^{-1/2},
 \end {equation}
 where $c_1,c_2,c_3 >0$ are constants. The choice $\gamma = \lambda^{4r/3}$ optimizes the inequality.
\end {proof}
Finally, we arrive at the upper bound on the volume.
\begin{lemma}\label{lem:uvol}
Let $r>1$ and define $\alpha' = \min\big\{\frac{r(r-1)}{2r^2-1},\frac{r}{2(r+2)}\big\}$. If $\mathbb{E}(\xi^r) < \infty$ then there is a constant $c'>0$ and a $\lambda_0 > 1$ such that  
\begin {equation}
 \P(\omega(R) > \lambda R^2) \leq c' \log(\l)\lambda^{-\alpha'}
\end {equation}
for every $R\geq 1$ and $\lambda \geq \lambda_0$.
\end {lemma}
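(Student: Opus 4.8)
The plan is to dominate the volume $\omega(R)$ by a sum of decoration weights indexed by a window of size $O(R^2)$, and then estimate the tail of this sum following the template of the proof of Lemma~\ref{l0}. First I would show that $B(R;\dsub)$ is contained in the decorations whose roots $s_i$ lie in the window $-i^-(R-1) < i \leq i^+(R)$. By the definition of $\dsub$, every $v \in B(R;\dsub)$ satisfies $d^\star(\uiptroot, v^\star) < R$, so it suffices to show that $d^\star(\uiptroot, s_i) \geq R$ once the spine label has first dropped by $R$ on either side. This is exactly the label-drop mechanism underlying Lemma~\ref{kmt_lem}: in the BDG construction every edge joins two white vertices whose labels differ by exactly one, so $d^\star(\uiptroot, s_i)$ dominates the drop of the label from $\ell(\uiptroot)$ down to the label of the common ancestor of $\uiptroot$ and $s_i$ in $M^\star$, and the latter is at most $-R$ as soon as $i$ leaves the window (this is precisely the index range over which $m(R)$ is defined in Lemma~\ref{l0}). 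Consequently
\[
\omega(R) \leq \sum_{-i^-(R-1) < i \leq i^+(R)} \omega(\Dec_i), \qquad \omega(\Dec_i) := \sum_{v \in V^\circ(\Dec_i)} \omega(v),
\]
and the number of terms is $N(R) = i^+(R) + i^-(R-1) - 1$, controlled by Lemma~\ref{kmt_lem}.

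Next I would bound the total $M$-degree $\omega(\Dec_i)$ of a single decoration by a constant multiple of its size. Writing the degree of a white vertex as the number of its corners in $\vartheta$ plus the number of its predecessors (corners whose successor is that vertex), the corner part sums to $2|\Dec_i| + O(1)$ over $\Dec_i$. The predecessor part counts corner-to-successor arcs landing in $\Dec_i$; since a successor is the next corner in contour order carrying a label one smaller, such a corner cannot belong to a decoration that is separated from $\Dec_i$ by a strictly lower label, so these incoming arcs originate from $\Dec_i$ and its contour-neighbours, their reach being governed by the label displacements. The relevant moment inputs are Lemma~\ref{l:ximoment}, giving $\mathbb{E}(|\Dec_i|^r) < \infty$ for $i \neq 0$ and $\mathbb{E}(|\Dec_0|^{r-1}) < \infty$, together with Lemma~\ref{l:deltal} and Lemma~\ref{l0}, used to bound the label displacements $\Delta\ell_i$ and hence the number of predecessor arcs that cross between neighbouring decorations.

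With these two ingredients in hand, I would estimate $\P(\omega(R) > \lambda R^2)$ as in the proof of Lemma~\ref{l0}. Conditioning on $\{N(R) \leq \gamma R^2\}$, whose complement has probability $\lesssim \log(\gamma)\gamma^{-1/2}$ by Lemma~\ref{kmt_lem}, one bounds $\omega(R)$ on this event by $\sum_{i=0}^{\gamma R^2}\omega(\Dec_i)$, peels off the bad decoration $\omega(\Dec_0)$ (which carries one fewer moment) into a separate term treated by Markov, and applies Markov to the remaining i.i.d.\ sum (after a suitable truncation of the summands). Collecting the resulting powers of $\gamma$, $\lambda$ and the truncation level and optimising over these free parameters yields the stated exponent $\alpha'$.

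The \textbf{main obstacle} is the second step: controlling the total degree of a decoration, including the incoming predecessor edges that may originate in neighbouring decorations, and tracking the label displacements $\Delta\ell_i$ that determine how far such arcs can reach. This is the mechanism that ties the volume bound to the label-range estimate of Lemma~\ref{l0}, and it is the source of the two competing terms in $\alpha' = \min\{\frac{r(r-1)}{2r^2-1},\frac{r}{2(r+2)}\}$; accordingly the final optimisation is a balancing of more than one parameter rather than the single-parameter choice $\gamma = \lambda^{4r/3}$ used in Lemma~\ref{l0}.
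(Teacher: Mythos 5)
Your step 1 (the containment of $B(R;\dsub)$ in the union of decorations indexed by $-i^-(R-1)<i\leq i^+(R)$) is correct and is implicitly what the paper uses to obtain $\min\{\ell(v):v\in B(R;\dsub)\}\geq m(R-1)$, and your closing outline (peel off the bad decoration, Markov plus Minkowski, optimise over several parameters) matches the paper's. The gap is in your step 2, which you rightly identify as the main obstacle but resolve incorrectly. The total $M$-degree $\omega(\Dec_i)$ of a decoration is \emph{not} bounded by a constant multiple of $|\Dec_i|$, and the incoming predecessor arcs do \emph{not} originate only from $\Dec_i$ and its contour-neighbours: a corner $c$ with $\ell(c)=\ell(v)+1$ has $v$ as its successor as soon as no white corner strictly between $c$ and $v$ in the clockwise contour has label $\leq\ell(v)$, so $c$ may sit in a decoration $\Dec_j$ with $|j-i|$ of the order of the first passage of the spine labels below $\ell(v)$ --- typically of order $\ell(v)^2$ --- which has nothing to do with the displacement $\Delta\ell$ of any single neighbouring decoration. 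A second, related problem is that the variables $\omega(\Dec_i)$ are not independent (an arc from $\Dec_j$ into $\Dec_i$ depends on all the labels in between), so the ``i.i.d.\ sum'' to which you propose to apply Markov and Minkowski is not i.i.d.

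The paper avoids both problems by never bounding per-vertex or per-decoration degrees at all. It observes instead that every edge incident to $B(R;\dsub)$ corresponds to a corner lying in the \emph{enlarged} window $H(R)=\bigcup_{-i^-(|m(R-1)|)<j\leq i^+(|m(R-1)|+1)}V^\circ(\Dec_j)$, whose index range is the first-passage window of the spine labels below the \emph{random} minimum label $m(R-1)$ of the ball: the successor of a vertex of $B(R;\dsub)$ cannot lie in the contour beyond $s_{i^+(|m(R-1)|+1)}$, and no corner before $s_{-i^-(|m(R-1)|)}$ can have its successor in the ball. Hence $\omega(R)\leq|H(R)|$, which is a sum of \emph{independent} decoration sizes over a window whose random length $N(|m(R-1)|)$ is controlled by the two-parameter split $\P(N(|m(R-1)|)>\gamma R^2)\leq\P(N(\eta R)>\gamma R^2)+\P(|m(R-1)|>\eta R)$, combining Lemma~\ref{kmt_lem} with Lemma~\ref{l0}. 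This global corner-counting step is what your proposal is missing, and the extra parameter $\eta$ it introduces is precisely the source of the exponent $\alpha'$.
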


\begin {proof}
We argue on the event when $\epsilon = -1$ in which case $\ell(\uiptroot) = 0$. The other case is treated in a similar way, the difference being that $\ell(\uiptroot) = -1$ and therefore one has to shift labels accordingly in the arguments.
 Define
\begin {equation}
 H(R) = \bigcup_{-i^-(|m(R-1)|) < j \leq i^+(|m(R-1)|+1)} V^{\circ}(\Dec_j).
\end {equation}
We claim that a vertex in the set $B(R ; d^\#)$ is not connected by an edge in $M$ to any vertex outside the set $H(R)$. To see this, first observe that
\begin {equation}
\min\{\ell(v)~:~v\in B(R;\dsub)\} \geq m(R-1).
\end {equation} 
The successor of a vertex
in $B(R;\dsub)$ as defined in (\ref{eq:succ}) can therefore not be
in the part of the white contour sequence strictly beyond the vertex
$s_{i^+(|m(R-1)|+1)}$ (the connection has to `go through' this
vertex). Similarly, no vertex before $s_{-i^-(|m(R-1)|)}$ in the
white contour sequence can have a successor in $B(R;\dsub)$ which
completes the proof of the claim. 

This implies that $|H(R)| \geq \omega(R)$ and thus
\begin {equation}
 \P(\omega(R) > \lambda R^2) \leq \P(|H(R)| > \lambda R^2).
\end {equation} As before, let $N(x) = i^+(x) + i^-(x-1)-1$. Then for any $\gamma > 0$
\begin {equation}\label{hestimate}
 \P(|H(R)| > \lambda R^2) \leq \P(N(|m(R-1)|) > \gamma R^2) + 
\P\Big(\sum_{i=0}^{\gamma R^2} |\Dec_i| > \lambda R^2\Big).
\end {equation}
Since $i^+$ and $i^-$ are increasing functions, then for any $\eta >0$ the first term may be estimated from the above by
\begin {eqnarray} \nonumber
 \P(N(|m(R-1)|) > \gamma R^2) &\leq& \P(N(\eta R) > \gamma R^2) +  \P(|m(R-1)| > \eta R) \\ &\leq& c_1 \log(\gamma/\eta^2)\eta \gamma^{-1/2}+c_2 \log(\eta)\eta^{-\alpha}
\end {eqnarray}
where $c_1,c_2>0$ are constants and $\alpha = \min\{2r-2,2r/3\}$. In
the last step, the estimate of the first term was obtained using Lemma
\ref{kmt_lem} and the second estimate came from using Lemma
\ref{l0}. The second term on the right hand side of (\ref{hestimate})
is estimated by separating the bad decoration from the rest as in the
proof of Lemma \ref{l0} and we obtain for any $0 < \delta < 1$
\begin {align*}
 \P\Big(\sum_{i=0}^{\gamma R^2} |\Dec_i| > \lambda R^2\Big) 
&\leq \P\Big(\sum_{i=1}^{\gamma R^2} |\Dec_i| > (1-\delta)\lambda R^2\Big) 
+ \P(|\Dec_0| > \delta \lambda R^2) \\
 &\leq \frac{\mathbb{E}\big(\sum_{i=1}^{\gamma R^2} |\Dec_i|\big)^r}
{(1-\delta)^r\lambda^r R^{2r}} + 
\frac{\mathbb{E}(|\Dec_0|^{r-1})}{\delta^{r-1}\lambda^{r-1}R^{2(r-1)}} \\
 &\leq \frac{\gamma^r \mathbb{E}(|\Dec_1|^r)}{(1-\delta)^r\lambda^r} + 
\frac{\mathbb{E}(|\Dec_0|^{r-1})}{\delta^{r-1}\lambda^{r-1}}
\end {align*}
where we used Markov's inequality and then Minkowski's inequality. By
Lemma \ref{l:ximoment} both expected values in the last expression are
finite since $\mathbb{E}(\xi^r) < \infty$. Thus, we finally have
\begin {align}
  \P(|H(R)| > \lambda R^2) \leq c_1 \log(\gamma/\eta^2)\eta
  \gamma^{-1/2}+c_2 \log(\eta)\eta^{-\alpha} + c_3 \gamma^r
  \lambda^{-r} + c_4 \lambda^{-r+1}
\end {align}
where $c_3,c_4 > 0$ are constants.  Choosing $\eta$ as a positive
power of $\gamma$ and $\gamma$ as a positive power of $\lambda$ allows
one to deduce that the exponent $\alpha'$ gives the optimal bound.
\end {proof}

The following result gives the lower bound
on the volume.

\begin{lemma}\label{lem:lvol}
There exists a $\lambda_0>1$  and a constant $c > 0$ such that
\begin {equation}
 \P(\omega(R) < \lambda^{-1} R^2) \leq e^{-c \lambda^{1/2}}
\end {equation}
for every $R\geq 1$ and $\lambda \geq \lambda_0$.
\end{lemma}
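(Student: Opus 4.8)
The plan is to reduce the statement to a volume lower bound for the UIPTree $M^\star$ and then prove the latter by combining the spine decomposition with a Chernoff estimate. Since every vertex of $M$ has degree at least one, the measure $\omega$ dominates counting measure, so $\omega(R)=\omega(B(R;\dsub))\ge|B(R;\dsub)|$. Each base vertex $s_i$ satisfies $s_i^\star=s_i$, and $\uiptroot^\star=\uiptroot$ since $\uiptroot\in V(M^\star)$; hence $\dsub(\uiptroot,s_i)=d^\star(\uiptroot,s_i)$, so $B(R;\dsub)$ contains every $s_i\in V(M^\star)$ with $d^\star(\uiptroot,s_i)<R$, i.e.\ the whole ball of radius $R$ in $M^\star$. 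Writing $V^\star(R)$ for the number of vertices of $M^\star$ within $d^\star$-distance $R$ of $\uiptroot$, we get $\omega(R)\ge V^\star(R)$, so it suffices to bound $\PP(V^\star(R)<\lambda^{-1}R^2)$. Crucially, $M^\star$ is always the UIPTree, whose offspring law is geometric with finite variance; this is why the present lemma, unlike Lemma~\ref{lem:uvol}, requires no moment assumption on $\xi$. Finally, if $\lambda^{-1}R^2\le1$ the event is empty (as $V^\star(R)\ge1$), so we may assume $R>\sqrt\lambda$.

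Recall that $M^\star$ is Kesten's size-biased tree: it has an infinite spine $\uiptroot=S^\star_0,S^\star_1,\dots$, and the families of outgrowths hanging from distinct spine vertices are independent, each outgrowth being a critical Galton--Watson tree with finite variance $\sigma^2$. Put $m=\lfloor R/4\rfloor$ and $h=\lceil R/\sqrt\lambda\rceil$; for $\lambda\ge\lambda_0$ (with $\lambda_0$ large) we have $h\le R/2$, so any outgrowth vertex hanging from $S^\star_k$ with $k\le m$ and at height $\le h$ lies within $d^\star$-distance $k+h<R$ of $\uiptroot$. Let $W_k$ be the number of such vertices (the total volume up to height $h$ of the outgrowths at $S^\star_k$); the $W_1,\dots,W_m$ are i.i.d.\ and $V^\star(R)\ge\sum_{k=1}^m W_k$. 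The key input is
\[
\PP\bigl(W_k\ge \eps h^2\bigr)\ge \frac{c_0}{h}
\]
for suitable constants $\eps,c_0>0$ and all $h\ge1$. This follows from Kolmogorov's estimate $\PP(\mathrm{height}\ge h)\sim 2/(\sigma^2 h)$ together with the fact that, conditioned on reaching height $h$, the volume up to height $h$ is typically of order $h^2$: Yaglom's theorem gives $Z_h\asymp h$, and a second-moment (Paley--Zygmund) argument yields $\PP(\mathrm{vol}_h\ge\eps h^2\mid \mathrm{height}\ge h)\ge c$ for some $c>0$.

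Now set $B=\#\{1\le k\le m: W_k\ge\eps h^2\}$, a $\mathrm{Binomial}(m,p)$ variable with $p\ge c_0/h$, so that $\EE B\ge m c_0/h\ge c_1\sqrt\lambda$. By the Chernoff lower-tail bound, $\PP(B\le\tfrac12\EE B)\le e^{-\EE B/8}\le e^{-c_2\sqrt\lambda}$. On the complementary event, using $h^2\ge R^2/\lambda$,
\[
\sum_{k=1}^m W_k\ge \eps h^2 B\ge \tfrac12\eps h^2\,\EE B\ge c_3\sqrt\lambda\cdot\frac{R^2}{\lambda}=c_3\frac{R^2}{\sqrt\lambda}\ge \lambda^{-1}R^2,
\]
the last inequality holding for $\lambda\ge\lambda_0$. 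Hence $\PP(V^\star(R)<\lambda^{-1}R^2)\le\PP(B\le\tfrac12\EE B)\le e^{-c_2\sqrt\lambda}$, which is the asserted bound with $c=c_2$.

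The main obstacle is the Galton--Watson estimate $\PP(W_k\ge\eps h^2)\ge c_0/h$: the expected outgrowth volume up to height $h$ is only of order $h$, so the $h^2$ volume is atypical and is produced entirely by the (probability $\asymp 1/h$) event of reaching height $\asymp h$; turning Yaglom's convergence into a genuine conditional lower bound on the volume is what requires the second-moment input for critical finite-variance trees. The remaining bookkeeping---the distance bounds $k+h<R$, the independence of the outgrowths in the plane-tree setting, and the trivial regime $\lambda^{-1}R^2\le1$---is routine.
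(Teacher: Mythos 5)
Your reduction to the UIPTree is exactly the paper's: both arguments observe that $\omega(R)\geq|B(R;\dsub)|\geq|B(R;d^\star)|$ (since every vertex has degree at least one and $\dsub$ restricts to $d^\star$ on $V(M^\star)$), so that everything comes down to a volume lower bound for the ball in $M^\star$. Where you diverge is that the paper simply cites this UIPTree estimate from the literature (Durhuus--Jonsson--Wheater and Fuji--Kumagai), whereas you prove it from scratch via the spine decomposition: truncating the outgrowths at height $h\asymp R/\sqrt{\lambda}$, using Kolmogorov's survival estimate plus a second-moment (Paley--Zygmund) argument to get $\P(W_k\geq\eps h^2)\geq c_0/h$, and then a Chernoff bound on the binomial number of ``successful'' spine vertices to produce the $e^{-c\sqrt{\lambda}}$ tail. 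Your argument is correct and self-contained, and the bookkeeping (the regime $R>\sqrt{\lambda}$, the distance bound $k+h<R$, the independence of the outgrowths) is handled properly; the only step left as a sketch is the conditional volume lower bound $\P(\mathrm{vol}_h\geq\eps h^2\mid\mathrm{height}\geq h)\geq c$, which is standard for critical finite-variance Galton--Watson trees and, for the geometric offspring law relevant here, can even be made fully explicit via the generating functions computed in the paper's Appendix. What the citation buys the paper is brevity; what your version buys is a proof whose constants and mechanism are visible, at the cost of importing (or reproving) the classical Kolmogorov and Yaglom asymptotics.
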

\begin {proof}
 The result follows from comparing $B(R;\dsub)$ to the graph ball in $M^\star$
\begin {equation}
B(R;d^\star) :=  \{v \in V(M^\star)~:~ d^\star(\uiptroot,v) < R\}.
\end {equation}
From the definition of $\omega$ and $\dsub$ 
it follows that 
$\omega(R) \geq |B(R;\dsub)| \geq |B(R;d^\star)|$ and one has the bound
\begin {equation}
 \P(|B(R;d^\star)| < \lambda^{-1} R^2) \leq e^{-c \lambda^{1/2}}
\end {equation}
for the UIPTree $M^\star$, see e.g.~\cite{durhuus:2007,fuji:2008}.
\end{proof}

\subsection{Upper bound on the resistance}
\label{ss:ures}

For any vertex $v$ in $M$ define the successor geodesic from $v$ to infinity as 
$\succg(v) =(v,\sigma(v),\sigma(\sigma(v)),\ldots)$ 
where $\sigma$ is defined in (\ref{succv}). 
For vertices $u,v$ in $M^\star$ denote the unique geodesic (i.e.~shortest path in $M^\star$) between $u$
and $v$ by $\geo^\star(u,v)$ and denote the successor geodesic of $v$
in $M^\star$ by $\geo^\star(v)$. Let $\langle v, u\rangle$ be the
vertex in $\gamma^\star(v)\cap \gamma^\star(u)$ closest to $v$ (and
$u$) in $M^\star$. If $u = s_i$ and $v = s_j$, $i\leq j$, define the closed
interval $[u,v] = \{s_i,s_{i+1},\ldots,s_j\}$. 
Furthermore, define $s_i\wedge  s_j = s_{i\wedge j}$. 
The next lemma demonstrates how close the metric $d$ is to
 the metric $d^\star$ measured in terms of the size of the
 maximum displacement of labels in the decorations. 
It resembles Lemma 5.2 in \cite{janson:2012sl} with a very similar 
proof which we give in the Appendix.

\begin {lemma}  \label{l:geosup}
For all $v,w \in V(M)$
\begin {equation}
 d(v,w) \leq d^\star(v^\star,w^\star)+ 
20 \max \{\Delta\ell(\Dec(u)):
u\in [v^\star\wedge w^\star,\langle v^\star,w^\star \rangle]\} 
+ 8.
\end {equation}
\end {lemma}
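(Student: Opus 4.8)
The plan is to prove the bound by exhibiting an explicit path in $M$ from $v$ to $w$ built out of successor geodesics, and comparing its length with the corresponding distance in the UIPTree $M^\star$. Recall the two basic properties of the BDG construction: each successor step $x\mapsto\sigma(x)$ is an edge of $M$ along which the label drops by exactly $1$, so the successor geodesic $\gamma(x)=(x,\sigma(x),\sigma^2(x),\dots)$ is a path whose labels decrease by one at each step; and, since $M$ is one-ended with labels tending to $-\infty$ along $\gamma$, any two successor geodesics $\gamma(v)$ and $\gamma(w)$ share a common infinite tail, merging at a confluent vertex $q$. Following $\gamma(v)$ down to $q$ and then $\gamma(w)$ back up to $w$ produces a walk in $M$ of length $(\ell(v)-\ell(q))+(\ell(w)-\ell(q))$, and a standard property of the bijection (as in \cite{legall:2011}) identifies $\ell(q)$ with the minimal label along the contour arc $A$ of $\tilde{\T}$ delimited by the two geodesics, up to an additive constant. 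This yields the master inequality $d(v,w)\le\ell(v)+\ell(w)-2\min_{u\in A}\ell(u)+2$.

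The same construction applied inside the truncated mobile $\vartheta^\star$ gives the analogue in $M^\star$; since $M^\star$ is a tree the successor geodesics are genuine geodesics and the identity is exact: writing $v^\star=s_i$, $w^\star=s_j$ and $\langle s_i,s_j\rangle=s_m$, one has $d^\star(s_i,s_j)=\ell(s_i)+\ell(s_j)-2\ell(s_m)$ with $\ell(s_m)=\min_{u\in A^\star}\ell(u)$ the minimal label along the spine arc $A^\star$ obtained from $A$ by deleting the decoration vertices. Subtracting the two expressions, the discrepancy splits into three pieces, each controlled by the displacement $\Delta\ell$. First, $|\ell(v)-\ell(s_i)|\le\Delta\ell(\Dec_i)$ and $|\ell(w)-\ell(s_j)|\le\Delta\ell(\Dec_j)$ directly from the definition of $\Delta\ell$. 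Second, the full-tree minimum $\min_{A}\ell$ can dip below the spine minimum $\min_{A^\star}\ell$ only because of decorations hanging off the arc: any $y\in A$ lies in some $\Dec_k$ with $s_k$ on $A^\star$, so $\ell(y)\ge\ell(s_k)-\Delta\ell(\Dec_k)\ge\min_{A^\star}\ell-\Delta\ell(\Dec_k)$, whence $\min_{A^\star}\ell-\min_A\ell\le\max_k\Delta\ell(\Dec_k)$. Collecting the pieces and bounding each $\Delta\ell(\Dec_k)$ by the maximum over the relevant spine interval gives $d(v,w)\le d^\star(v^\star,w^\star)+c\,\max\{\Delta\ell(\Dec(u)):u\in[s_{i\wedge j},s_m]\}+c'$; the stated constants $20$ and $8$ simply absorb the additive and multiplicative losses incurred when the successor-geodesic path is constructed carefully (matching corners to vertices, handling the half-edge at the root and the two values of $\epsilon$, and the cost of getting onto the geodesics), and are not meant to be sharp.

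The crucial structural point, and the main obstacle, is to pin down the correspondence between the contour arcs and confluence points of $M$ and of $M^\star$. One must verify on the one hand that the full-tree minimal label differs from the spine minimal label by at most the \emph{single} worst decoration displacement — this is what produces a $\max$ rather than a sum over the arc — and on the other hand that the decorations that actually matter are exactly those indexed by $[v^\star\wedge w^\star,\langle v^\star,w^\star\rangle]$. Concretely, I would check that the contour arc $A$ realizing the master inequality projects, under $u\mapsto u^\star$, onto the spine segment from $s_{i\wedge j}$ up to the confluent $s_m$, and that the low-label region where $\gamma(v)$ and $\gamma(w)$ merge sits inside this segment. Since the contour of $\vartheta^\star$ is exactly the contour of $\tilde{\T}$ with the decoration vertices removed, this compatibility follows from a direct inspection of how the contour traverses the decorations attached to the spine; this is precisely the step where the argument parallels \cite{janson:2012sl}, Lemma~5.2, and once it is in place the remaining estimates are routine applications of the definition of $\Delta\ell$.
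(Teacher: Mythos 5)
Your proposal is correct and follows essentially the same route as the paper's proof: both rest on the identity $d(u,v)=\ell(u)-\ell(v)$ along successor geodesics, on the fact that two successor geodesics merge at a vertex whose label is (one less than) the minimal label on the intervening contour arc, and on the bound $\ell(y)\geq \ell(y^\star)-\Delta\ell(\Dec(y^\star))$ to compare that minimum with the corresponding minimum over the spine of $M^\star$. The only difference is bookkeeping --- the paper assembles an explicit path $v\to v^\star\to z\to w^\star\to w$ from the successor geodesics of $v^\star$ and $w^\star$ and treats the cases $v^\star=w^\star$, $w^\star\in\gamma^\star(v^\star)$ and the general case separately, while you subtract the two cactus-type formulas directly --- and the verifications you defer (that the relevant contour arc only meets decorations indexed by $[v^\star\wedge w^\star,\langle v^\star,w^\star\rangle]$, and the off-by-one between $\ell(\langle v^\star,w^\star\rangle)$ and the arc minimum) are precisely the points the paper checks, and are routine.
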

We are now equipped to establish an upper bound on the resistance provided in the following lemma.
\begin {lemma}\label{lem:ures}
Let $r>1$ and define $\alpha = \min\{2r-2,2 r/3\}$.  If $\mathbb{E}(\xi^r) < \infty$ then there is a constant $c(r)>0$ and a $\lambda_0>1$  such that
\begin {equation} \label{upres}
 \P\big(\exists v \in B(R;\dsub): \Reff(\uiptroot,v) > \lambda d^\#(\uiptroot,v)\big) < c(r) \log(\lambda)\lambda^{-\alpha'}
\end {equation}
for every $R\geq 1$ and $\lambda \geq \lambda_0$.
\end {lemma}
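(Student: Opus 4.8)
The plan is to bound the effective resistance $\Reff_M(\uiptroot,v)$ by the graph distance $d(\uiptroot,v)$ in $M$, using the elementary fact that the effective resistance between two vertices is at most the length of any path joining them (resistances in series). Since all edges have unit conductance, $\Reff_M(\uiptroot,v)\leq d(\uiptroot,v)$ for every $v$. Thus it suffices to control $d(\uiptroot,v)$ in terms of $\dsub(\uiptroot,v)$. First I would invoke Lemma~\ref{l:geosup}, which gives
\begin{equation}
d(\uiptroot,v)\leq d^\star(\uiptroot,v^\star)+20\max\{\Delta\ell(\Dec(u)):u\in[\uiptroot\wedge v^\star,\langle\uiptroot,v^\star\rangle]\}+8.
\end{equation}
Here $\uiptroot=\uiptroot^\star$, so $\uiptroot\wedge v^\star$ and $\langle\uiptroot,v^\star\rangle$ both lie on the segment of the spine $(s_i)$ between $\uiptroot$ and $v^\star$, and the maximum is taken over the decorations hanging off that finite stretch of spine. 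On the other hand, from the definition~\eqref{dsub_def_eq} we have $d^\star(\uiptroot,v^\star)\leq\dsub(\uiptroot,v)$, so the `main term' is already comparable to $\dsub(\uiptroot,v)\leq R$.

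The crux is therefore to show that the decoration term is unlikely to be much larger than $\lambda\dsub(\uiptroot,v)$ simultaneously for \emph{all} $v\in B(R;\dsub)$. The key observation is that if $v\in B(R;\dsub)$ then the relevant decorations are indexed by spine vertices $s_i$ with $d^\star(\uiptroot,s_i)<R$, i.e.\ the indices $i$ range over a set whose size is controlled by the labels: as in the proofs of Lemmas~\ref{l0} and~\ref{lem:uvol}, the spine vertices within $d^\star$-distance $R$ of $\uiptroot$ correspond to indices $-i^-(\cdot)<i\leq i^+(\cdot)$, and $N(R)=i^+(R)+i^-(R-1)-1$ counts them. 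So the event in~\eqref{upres} is contained in the event that $\max\{\Delta\ell_i:0\leq i\leq N(R)\}$ exceeds a constant multiple of $\lambda R$ (after absorbing the additive $8$ and the factor $20$ into constants, valid for $R,\lambda\geq1$). This is exactly the quantity already estimated inside the proof of Lemma~\ref{l0}.

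Concretely, I would introduce an auxiliary parameter $\gamma>0$ and split
\begin{equation}
\P\big(\exists v\in B(R;\dsub):\Reff(\uiptroot,v)>\lambda d^\#(\uiptroot,v)\big)\leq\P\Big(\max_{0\leq i\leq\gamma R^2}\Delta\ell_i>c\lambda R\Big)+\P(N(R)>\gamma R^2),
\end{equation}
for a suitable constant $c$. The second term is handled by Lemma~\ref{kmt_lem} (bounding $i^\pm(R)$), giving a contribution of order $\log(\gamma)\gamma^{-1/2}$. For the first term I would pass to a union bound over the decorations, separating the bad decoration $\Dec_0$ from the ordinary ones exactly as in Lemma~\ref{l0}: using Markov's inequality on $\sum_{i=1}^{\gamma R^2}\Delta\ell_i^{2r}$ together with Lemmas~\ref{l:ximoment} and~\ref{l:deltal} controls the ordinary decorations, while a separate Markov bound on $\Delta\ell_0^{2r-2}$ handles the bad one. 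This yields a bound of the same shape as~\eqref{mrfinal}, and the optimizing choice $\gamma=\lambda^{4r/3}$ again balances the terms and produces the exponent $\alpha'$. The main obstacle is purely bookkeeping: ensuring that the decorations appearing in Lemma~\ref{l:geosup} really are indexed by the spine vertices counted by $N(R)$ when $v\in B(R;\dsub)$, so that the maximum is taken over the \emph{same} random family already estimated in Lemma~\ref{l0}. Once that identification is made, the resistance upper bound reduces to a reweighting of the label-displacement estimate, and no genuinely new probabilistic input is needed.
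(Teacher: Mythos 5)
There is a genuine gap in the reduction step. The event in \eqref{upres} is that \emph{some} $v\in B(R;\dsub)$ satisfies $\Reff(\uiptroot,v)>\lambda\, d^\#(\uiptroot,v)$, and here $d^\#(\uiptroot,v)$ can be as small as $1$ (e.g.\ for $v$ in the decoration of $\uiptroot$ itself, or with $v^\star$ adjacent to $\uiptroot$ on the spine of $M^\star$). Combining $\Reff(\uiptroot,v)\leq d(\uiptroot,v)$ with Lemma~\ref{l:geosup} and $d^\#(\uiptroot,v)\geq\max\{d^\star(\uiptroot,v^\star),1\}$, the most one can extract from a violating $v$ is
\begin{equation*}
20\max\{\Delta\ell(\Dec(u)):u\in[\uiptroot\wedge v^\star,\langle\uiptroot,v^\star\rangle]\}+8>(\lambda-1)\max\{d^\star(\uiptroot,v^\star),1\},
\end{equation*}
i.e.\ the decoration maximum exceeds a constant multiple of $\lambda\cdot\max\{d^\star(\uiptroot,v^\star),1\}$ --- \emph{not} of $\lambda R$. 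Your claimed containment of the event in \eqref{upres} inside $\{\max_{0\leq i\leq N(R)}\Delta\ell_i>c\lambda R\}$ is therefore false: a violation caused by a vertex close to the root only forces the decoration maximum to be of order $\lambda$. Concretely, your upper bound $\P(\max_{0\leq i\leq\gamma R^2}\Delta\ell_i>c\lambda R)$ tends to $0$ as $R\to\infty$ for fixed $\lambda$ (roughly $\gamma R^2\cdot(\lambda R)^{-2r}$ for $r>1$), whereas the probability that, say, the bad decoration at the root alone produces a large value of $d(\uiptroot,v)/d^\#(\uiptroot,v)$ is bounded below uniformly in $R$; so no valid argument can pass through that containment.

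The fix is exactly the normalization you discarded: one must control the \emph{ratio}
$\max\{\Delta\ell(\Dec(u)):u\in[\uiptroot\wedge v^\star,\langle\uiptroot,v^\star\rangle]\}/\max\{d^\star(\uiptroot,v^\star),1\}$
uniformly over $v\in B(R;\dsub)$. This is what the paper does: it takes the vertex $\bar v(R)$ attaining the supremum of this ratio, conditions on $D=d^\star(\uiptroot,\bar v(R))$, and compares the sum of $\Delta\ell_i^{2r}$ over the $\gamma D^2$ relevant decorations against $D^{2r}$ (the factor $D^{2-2r}\leq 1$ making the bound uniform in $D$), while the count of relevant decorations is controlled by $i^\pm(D)$ via Lemma~\ref{kmt_lem} applied at scale $D$ rather than $R$. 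Your ingredients (Lemmas~\ref{l:geosup}, \ref{l:deltal}, \ref{l:ximoment}, \ref{kmt_lem}, separation of $\Dec_0$, the choice $\gamma=\lambda^{4r/3}$) are all the right ones, but they must be deployed on the normalized quantity; as written, the first displayed inequality of your argument does not hold.
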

\begin {proof}

First observe that $d(\uiptroot,v) \geq \Reff(\uiptroot,v)$ for all $v \in
V(M)$. We may rule out the case that the vertex $v$ in \eqref{upres} is equal to $\uiptroot$ and we note that  $d^\#(\uiptroot,v) \geq \max\{d^\star(\uiptroot,v^\star),1\}$ when  $v\neq \uiptroot$. Using these facts together with Lemma
\ref{l:geosup} one can thus estimate the probability in (\ref{upres})
from the above by
\begin {equation} \label{up1}
 \P\left(\max_{v \in B(R;\dsub)} \frac{20\max \{\Delta\ell(\Dec(u)) :
   u\in [\uiptroot\wedge v^\star,\langle \uiptroot, v^\star\rangle]
   \}}{\max\{d^\star(\uiptroot,v^\star),1\}} > \lambda - 9\right).
\end {equation}
Since $B(R;\dsub)$ is finite, the outermost maximum is attained at some
vertex, say $\bar{v}(R)$ (which may clearly be chosen to be in
$M^\star$).  Then, for any $\gamma >0$ we may estimate (\ref{up1}) from the above by
\begin {align} \nonumber 
&\P\left(\frac{20^{2r} \max\{\Delta\ell_i^{2r} : 0\leq i \leq \gamma \max\{d^\star(\uiptroot,\bar{v}(R)),1\}^2\}}{\max\{d^\star(\uiptroot,\bar{v}(R)),1\}^{2r}}>(\lambda-9)^{2r}\right) \\
&\qquad+\P\left(|[\uiptroot\wedge \bar{v}(R),\langle \uiptroot, \bar{v}(R)\rangle]|-1 > \gamma \max\{d^\star(\uiptroot,\bar{v}(R)),1\}^2\right). \label{up2}
\end {align}
The expression in the first line of (\ref{up2}) may be estimated from the above by first estimating the maximum in the numerator by the sum of all the terms and then separating the bad decoration (first term in the sum) from the rest exactly as was done in the estimate of \eqref{mr} in the proof of Lemma \ref{l0}.  Conditioning on $d^\star(\uiptroot,\bar{v}(R))$ and using Lemma \ref{l:deltal} along with similar arguments as in Lemma \ref{l0} yields the following upper bound on the first term
\begin {eqnarray} \label{urest1}
c_1 \gamma \lambda^{-2r}+c_2 \lambda^{-2r+2}.
\end {eqnarray}
where $c_1,c_2>0$ are constants.

We may use Lemma \ref{kmt_lem} to estimate the expression in the second line of (\ref{up2}). In the case $\epsilon = -1$, writing $d^\star(\uiptroot,\langle \uiptroot, \bar{v}(R)\rangle) = D$, one has
\begin {equation}
 |[\uiptroot\wedge \bar{v}(R),\langle \uiptroot, \bar{v}(R)\rangle]| \leq i^+(D) + i^{-}(D)
\end {equation}
and $\max\{d^\star(\uiptroot,\bar{v}(R)),1\} \geq D$. This yields the upper bound,
\begin {equation} \label{urest2}
 \P\left(i^+(D) + i^{-}(D) -1> \gamma D^2\right) \leq c_3 \log(\gamma) \gamma^{-1/2}. 
\end {equation}
where $c_3>0$ is a constant. The case $\epsilon = 1$ is treated in the same way taking into account that labels are shifted. Combining the estimates \eqref{urest1} and \eqref{urest2} and choosing $\gamma = \lambda^{4r/3}$ gives an optimal upper bound on \eqref{up2} and yields the exponent $\alpha$.
\end {proof}

\subsection{Lower bound on the resistance}
\label{ss:lres}

The only remaining bound needed for~\eqref{jprob_eq},
and hence for proving Theorem~\ref{th:spec}, is the lower
bound on the resistance.  This is given by 
the following lemma.
\begin {lemma}\label{lem:lres}
Let $\gb>5/2$ and 
assume $\E(\xi^\gb)<\infty$.
For $0<q<\min(1,2\gb-5)$  there exists a 
constant $c(q)>0$ and a $\lambda_0>1$  such that
\begin {equation}
 \P(\Reff_M(\uiptroot,B(R;\dsub)^c) < \lambda^{-1} R) 
\leq c(q) \lambda^{-q}
\end {equation}
for every $R\geq 1$ and $\lambda \geq \lambda_0$.
\end {lemma}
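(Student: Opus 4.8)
\emph{Overall strategy.} The plan is to obtain the lower bound on resistance from the variational characterisation behind~\eqref{reff_def_eq}, namely
\[
\Reff_M(\uiptroot,B(R;\dsub)^c)^{-1}=\inf\bigl\{\cE(f):\,f(\uiptroot)=1,\ f\equiv 0\text{ on }B(R;\dsub)^c\bigr\}.
\]
Thus it suffices to exhibit a single test function $f$ with $f(\uiptroot)=1$, $f\equiv 0$ on $B(R;\dsub)^c$, and Dirichlet energy $\cE(f)\le \lambda/R$ off an event of probability $\le c(q)\lambda^{-q}$; on the complementary event $\Reff_M(\uiptroot,B(R;\dsub)^c)\ge R/\lambda$, which after renaming $\lambda$ is the assertion of the lemma and supplies the last inequality needed for~\eqref{jprob_eq}. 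Following the `projecting long bonds' method of~\cite{berger:2002,kumagai:2008}, I would project $M$ onto the infinite spine $S^\star=(S^\star_k)_{k\ge 0}$ of the UIPTree $M^\star$, which is the backbone governing resistance growth: for $v\in V(M)$ let $\iota(v)$ be the index $k$ of the last spine vertex $S^\star_k$ lying on the $M^\star$-geodesic from $\uiptroot$ to $v^\star$, and take $f(v)=h(\iota(v))$ for a profile $h$ decreasing linearly from $1$ to $0$ across the relevant range of spine levels. Since $\iota$ depends on $v$ only through $v^\star$, this $f$ is constant on every decoration, so intra-decoration bonds of $M$ do not contribute to $\cE(f)$.

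\emph{Reduction to a sum of squared projected lengths.} With this choice $\cE(f)=\sum_{\{u,v\}\in E(M)}\bigl(h(\iota(u))-h(\iota(v))\bigr)^2$ collapses to a sum over inter-decoration bonds, and, as $h$ has slope $\asymp 1/R$,
\[
\cE(f)\ \lesssim\ \frac{1}{R^2}\sum_{\text{inter-dec }\{u,v\}}\bigl(\iota(u^\star)-\iota(v^\star)\bigr)^2 .
\]
Every such bond joins a white vertex $u$ to its successor $\sigma(u)$, whose labels differ by one; using that the $M^\star$-distance from $\uiptroot$ is comparable to $\ell(\cdot)$ minus the running label-minimum, the \emph{projected length} $|\iota(u^\star)-\iota(\sigma(u)^\star)|$ of the bond is controlled by $1+\Delta\ell(\Dec(u))+\Delta\ell(\Dec(\sigma(u)))$, i.e.\ by the label displacements of the two decorations it meets (Lemma~\ref{l:deltal}). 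The task then becomes to bound $\sum(\text{projected length})^2$, which I would organise by spine level, writing it as a sum over the gaps crossed, the number of crossing bonds and their lengths at each level being governed by the sizes of the decorations attached near that level and by the excursions of the label walk.

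\emph{Moment inputs and the main obstacle.} The probabilistic inputs are: the number of spine levels met inside $B(R;\dsub)$ is $\asymp R$, with the bad events $\{i^\pm(R)>\lambda R^2\}$ controlled by Lemma~\ref{kmt_lem}; the decoration moments $\E(|\Dec_i|^\beta)<\infty$ of Lemma~\ref{l:ximoment}; and hence, via Lemma~\ref{l:deltal}, the polynomial tail $\P(\Delta\ell_i\ge t)\lesssim t^{-2\beta}$. I expect the hardest point to be the interplay of two competing errors. First, $f=h(\iota)$ need \emph{not} vanish on all of $B(R;\dsub)^c$: a vertex lying high in an upward excursion of the label walk, or deep inside a large decoration, can have $d^\star(\uiptroot,v^\star)\ge R-1$ while $\iota(v)$ is small, so one must truncate such excursions and decorations and absorb the resulting boundary error, a term of KMT/volume type whose exponent can be pushed up towards, but not past, $1$. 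Second, a single large decoration sitting near the spine produces long bonds whose squared projected lengths $\sim\Delta\ell^2$ enter the energy with a heavy tail, and estimating $\sum\Delta\ell_i^2$ through the $\beta$-th moment yields an error of order $\lambda^{-(2\beta-5)}$. Optimising the truncation level to balance these two contributions is exactly what forces $\beta>5/2$ and produces the admissible range $0<q<\min(1,2\beta-5)$.

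\emph{Conclusion.} Once $\E\bigl[\sum(\text{projected length})^2\bigr]$ together with the truncation error has been controlled at the appropriate polynomial rate, Markov's inequality—applied as in the proofs of Lemmas~\ref{l0} and~\ref{lem:uvol}, and splitting off the bad decoration $\Dec_0$ from the rest—converts the expectation estimate into $\P(\cE(f)>\lambda/R)\le c(q)\lambda^{-q}$. Combined with the first paragraph this gives $\P\bigl(\Reff_M(\uiptroot,B(R;\dsub)^c)<\lambda^{-1}R\bigr)\le c(q)\lambda^{-q}$, as claimed.
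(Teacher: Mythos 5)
Your overall set-up is sound and is in fact the Dirichlet-principle counterpart of what the paper does: a test function $f=h(\iota(v))$ that is constant on each decoration and on each bush of $M^\star$ is exactly the potential-theoretic version of the paper's ``shorting'' construction (subdividing each edge $e=uv$ into $|e|_\star=d^\star(u^\star,v^\star)$ unit pieces and identifying them with the spine geodesic), and you use the variational principle in the correct direction. The genuine gap is in the step ``bound $\sum(\text{projected length})^2$ by its expectation and apply Markov''. With a linear profile the requirement $\cE(f)\leq\lambda/R$ becomes $\sum_e|e|_\star^2\leq\lambda R$ over the bonds whose projection crosses the first $\asymp R$ spine edges. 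The paper's counting lemmas (Lemmas~\ref{geod_prob_est_lem} and~\ref{num_geod_est_lem}) give $\E[A_k^{(n)}]\lesssim n^{-(2\gb-4)}$ for the number of bonds of projected length $n$ crossing a fixed spine edge, so
\begin{equation*}
\E\Big[\sum_e|e|_\star^2\Big]\;\lesssim\;R\sum_{n\geq1}n^{2}\,n^{-(2\gb-4)}
=R\sum_{n\geq1}n^{6-2\gb},
\end{equation*}
which diverges for $\gb\leq 7/2$. Truncating the long bonds does not repair this: making the probability that some bond exceeds the truncation level smaller than $\l^{-q}$ \emph{uniformly in} $R$ forces the truncation level to grow with $R$, and the truncated second moment then also grows with $R$. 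So your argument, as described, proves the lemma only under $\gb>7/2$, not $\gb>5/2$; the exponent $2\gb-5$ you anticipate does not come out of a second-moment bound on the energy.

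The missing idea is to avoid second moments of the projected lengths altogether by working with \emph{negative moments of the resistance}. The paper introduces the cut vertex $z_R=S^\star_{L_R}$ separating $\uiptroot$ from $B(R;d^\star)^c$ (this also disposes cleanly of your boundary issue, since $f$ need only be tested against $z_R$), writes $\Reff_{(M^\star,c)}(\uiptroot,z_R)=\sum_{k=1}^{L_R}c_k^{-1}$ by the series law with $c_k=\sum_{e\in A_k}|e|_\star$, and then estimates, for $0<q<1$,
\begin{equation*}
\Big(\sum_{k=1}^{L_R}c_k^{-1}\Big)^{-q}
\;\leq\; L_R^{-q-1}\sum_{k=1}^{L_R}c_k^{\,q}
\;\leq\; L_R^{-q-1}\sum_{k=1}^{L_R}\sum_{e\in A_k}|e|_\star^{\,q}
\end{equation*}
by H\"older's inequality and subadditivity of $x\mapsto x^q$. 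Since $\E[L_R^{-q}\indic{L_R>0}]\asymp R^{-q}$, one only needs $\sum_n n^{q}\E[A_k^{(n)}]\lesssim\sum_n n^{q-(2\gb-4)}<\infty$, i.e.\ $q<2\gb-5$ — which is precisely the hypothesis, and replaces the second moment of the bond lengths by their $q$-th moment with $q<1$. (Note also that your linear profile corresponds, after shorting, to the Cauchy--Schwarz bound $\sum_k c_k^{-1}\geq L_R^2/\sum_k c_k$, which is strictly weaker than the series-law identity the paper exploits; even with the optimal harmonic profile you would still need the H\"older/subadditivity step to get down to $q$-th moments.) Finally, the bad decoration $\Dec_0$ must be split off and treated with the weaker moment $\E(|\Dec_0|^{\gb-1})<\infty$ from Lemma~\ref{l:ximoment}, compensated by the smaller count of geodesics anchored at $s_0$; your sketch does not address this.
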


To prove this lemma
we will compare resistances in
$M$ with resistances in the tree $M^\#$ with certain
non-constant conductances.  In this section we write 
$e=uv\in M$ if $e$ is an edge of $M$ with endpoints $u$ and $v$.  
Define $|e|_\star=d^\star(u^\star,v^\star)$, and for $xy$
an edge of $M^\star$ let $A_{xy}$
be the set of edges $e=uv$ of $M$ such that 
$xy$ lies on the (unique) path from $u^\star$ to 
$v^\star$ in $M^\star$.  For an edge $xy$
of $M^\#$, define
\begin{equation}\label{c_def_eq}
c_{xy}=\left\{
\begin{array}{ll}
\sum_{e\in A_{xy}} |e|_\star, & \mbox{if } x, y\in M^\star,\\
\oo, & \mbox{otherwise}.
\end{array}\right.
\end{equation}
We claim that for all $A\se V(M)=V(M^\#)$,
\begin{equation}\label{Reff_comp_eq}
\Reff_M(\uiptroot,A)\geq \Reff_{(M^\#,c)}(\uiptroot,A).
\end{equation}
To see this, we note that the network $(M^\#,c)$
can be obtained by modifying $M$ according
to the following procedure.  Let $e=uv$ be an arbitrary
edge of $M$ and for convenience assume that $e$ is directed from $u$
to $v$. If $u^\star = v^\star$ we `short' $e$ by identifying $u$ and
$v$. 
Otherwise, subdivide $e$ into $|e|_\star$ series resistors each with
resistance $1/|e|_\star$ so that the total resistance is still 1.
Then `short' this network by identifying the origin of the first
resistor with $u^\star$ (if $u\neq u^\star$) and by identifying the
endpoint of the $j$th resistor with the endpoint of the $j$th step on
the geodesic from $u^\star$ to $v^\star$ in $M^\star$ (the last
identification is only necessary if $v \neq v^\star$), see
Fig.~\ref{f:short}.
\begin{figure} [t]
\centerline{\scalebox{0.35}{\includegraphics{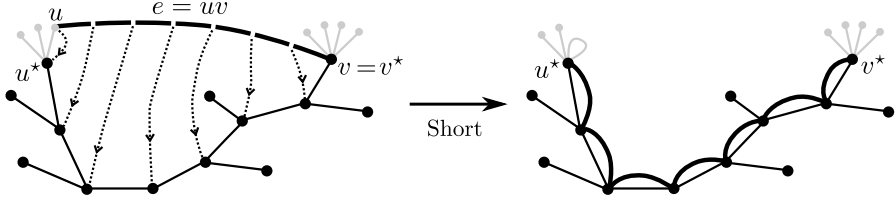}}}
\caption{A part of $M^\#$ is shown on the left where the black solid part is $M^\star$ and the decorations from vertices $u$ and $v$ are shown in gray. The thick dashed edge $e=uv$ is an edge in $M$. The dotted lines represent how the edge $e$ is projected down on the geodesic from $u^\star$ to $v^\star$ by shorting.} \label{f:short}
\end{figure}
By the series- and parallel laws, this gives the
conductance~\eqref{c_def_eq};
by the `shorting law' we obtain~\eqref{Reff_comp_eq}.

By~\eqref{Reff_comp_eq} we have
\begin{equation}
\Reff_M(\uiptroot,B(R;d^\#)^c)\geq 
\Reff_{(M^\#,c)}(\uiptroot,B(R;d^\#)^c)
\geq\Reff_{(M^\star,c)}(\uiptroot,B(R-1;d^\star)^c).
\end{equation}
The last inequality holds because the boundary
of $B(R-1;d^\star)$ separates $\uiptroot$
from the boundary of $B(R;\dsub)$.
This estimate is not useful when $R=1$ but one may easily treat that 
case separately by noting that $B(1;d^\#)$ consists of $S^\star_1$ and 
the vertices in the decoration $\Dec(\uiptroot)$.
In what follows we will be considering
resistances of the form
$\Reff_{(M^\star,c)}(\uiptroot,B(R,d^\star)^c)$.
Our objective will be to show that for $0<q<\min(1,2\gb-5)$ 
there is a constant $c>0$ such that
\begin{equation}\label{eq:R_obj}
\EE[\Reff_{(M^\star,c)}(\uiptroot,B(R;d^\star)^c)^{-q}]
\leq c R^{-q}.
\end{equation}
This will prove Lemma~\ref{lem:lres} since by
Markov's inequality
\begin{equation}
\begin{split}
\P(\Reff_M(\uiptroot,B(R;\dsub)^c) < \lambda^{-1} R) &\leq
\P(\Reff_{(M^\star,c)}(\uiptroot,B(R-1;d^\star)^c)^{-q}
> \lambda^{q} R^{-q}) \\
& \leq
\frac{\EE[\Reff_{(M^\star,c)}(\uiptroot,B(R-1;d^\star)^c)^{-q}]}
{\l^q R^{-q}}.
\end{split}
\end{equation}

We start by finding a vertex $z_R$ in $M^\star$ which separates
$\uiptroot$ from the boundary of $B(R;d^\star)$.
Recall that $S^\star$ is the spine in $M^\star$.
Denote the subgraph of $M^\star$ consisting of $S^\star_i$ 
and the collection of finite outgrowths from the 
normal children of $S^\star_i$ by $W^\star_i$.
Thus $W^\star_i$ is a tree, and we denote the collection
of vertices in generation $n$ of $W^\star_i$
by $W^\star_i(n)$ (where $W^\star_i(0)=\{S_i^\star\}$).
Let $z_R=S^\star_\ell$ for $0\leq \ell\leq R/2$ chosen maximal
such that for all $j<\ell$, we have
$W^\star_j(R-j)=\varnothing$  (ie,
the tree
$W^\star_j$ does not reach level $R$ in $M^\star$).
Note that 
every path from $\uiptroot$ to 
level $R$ in $M^\star$ goes through $z_R$. 

Define $L_R = d^\star(\uiptroot,z_R)$
(thus $L_R=\ell$ in the above). 
The distribution of $L_R$ is easy to compute. Recall the construction of $\sgt$ in Section \ref{s:sgt} and that $\sgt \sim M^\star$ when $w_i = 1$ for all $i$.
Let $(Y_n)_{n\geq 0}$ denote a random sequence with the same
distribution as $(|W^\star_i(n)|)_{n\geq 0}$.  Thus $Y_n$ is
the size of the $n$:th generation
in the modified Galton--Watson process defined as follows.
Firstly, $Y_0=1$ and the zeroth generation offspring
distribution is given by $\P(Y_1=i)=(i+1)\pi_{i+1}$
where $\pi_i=2^{-i-1}$.  For all later generations
the offspring distribution is $(\pi_i)_{i\geq0}$. We collect a few facts about $(Y_n)_{n\geq}$ in the Appendix.
Using \eqref{yn0}, we have for $k < \lfloor R/2 \rfloor$ that
\begin {equation} \label{lrdist1}
 \P(L_R = k) = \P(Y_{R-k} >0)\prod_{j=0}^{k-1} \P(Y_{R-j}=0) = 
{\frac {2\,R-2\,k+1}{ \left( R+1 \right) ^{2}}}
\end {equation}
and 
\begin {equation} \label{lrdist2}
 \P(L_R = \lfloor R/2 \rfloor) = \prod_{j=0}^{\lfloor R/2 \rfloor-1} \P(Y_{R-j}=0) = 
\left(\frac{R-\lfloor R/2 \rfloor + 1}{R+1}\right)^2.
\end {equation}

Turning now to~\eqref{eq:R_obj}, we
begin by considering the case $L_R>0$.  Since
$z_R$ separates $\uiptroot$ from $B(R;d^\star)^c$ we have
\begin{equation}
\begin{split}
\Reff_{(M^\star,c)}(\uiptroot,B(R,d^\star)^c)\indic{L_R>0}
&\geq \Reff_{(M^\star,c)}(\uiptroot,z_R)\indic{L_R>0}\\
&=\sum_{k=1}^{L_R} c(S^\star_{k-1},S^\star_k)^{-1}\indic{L_R>0},
\end{split}
\end{equation}
where in the second step we used the series law.
Let $q\in(0,1)$ and write $A_k=A_{S^\star_{k-1},S^\star_k}$
for the set of edges $uv$ of $M$ such that
the edge $(S^\star_{k-1},S^\star_k)$ of $M^\star$
lies on the path from $u^\star$ to $v^\star$ in $M^\star$. 
It follows that
\begin{equation}\label{eq:LRpos}
\begin{split}
\EE[\Reff_{(M^\star,c)}&(\uiptroot,B(R,d^\star)^c)^{-q}\indic{L_R>0}]
\leq \EE\Big[\Big(\sum_{k=1}^{L_R}c(S^\star_{k-1},S^\star_k)^{-1}\Big)^{-q}
\indic{L_R>0}\Big]\\
&\leq \EE\Big[L_R^{-q-1}\sum_{k=1}^{L_R}
\Big(\sum_{e\in A_k} |e|_\star\Big)^q\indic{L_R>0}\Big]\\
&\leq \EE\Big[L_R^{-q-1}\sum_{k=1}^{L_R}
\EE\Big[\sum_{e\in A_k} |e|_\star^q\,\big|\,L_R\Big]\indic{L_R>0}\Big].
\end{split}
\end{equation}
Here we used H\"older's inequality in the second
step and subadditivity in the third step, before conditioning
on $L_R$.  On the event $L_R>0$ we have that 
\begin{equation}\label{eq:A_sum1}
\EE\Big[\sum_{e\in A_k} |e|_\star^q\,\big|\,L_R\Big]
=\sum_{n\geq1}n^q \EE\big[A_k^{(n)}\mid L_R\big]
\end{equation}
where $A_k^{(n)}$ denotes the number
of edges $e=uv\in A_k$ such that 
$|e|_\star=d^\star(u^\star,v^\star)=n$.
We aim to show that (still on the event $L_R>0$)
the sum in~\eqref{eq:A_sum1} is bounded by a constant.
It is easy to check, using \eqref{lrdist1} and \eqref{lrdist2}, that $\EE[L_R^{-q}\indic{L_R>0}]$
is of the order $R^{-q}$ for $q\in(0,1)$.
Thus
finiteness of~\eqref{eq:A_sum1} will give~\eqref{eq:R_obj}
in the case $L_R>0$.

Now we turn to the case $L_R=0$.
Writing $x\sim y$ if $x,y$ are adjacent 
vertices in $M^\star$, we can use the simple
fact that
\[
\Reff_{(M^\star,c)}(\uiptroot,B(R,d^\star)^c)\geq
\Reff_{(M^\star,c)}(\uiptroot,B(1,d^\star)^c)
=\Big(\sum_{x\sim\uiptroot}c_{\uiptroot,x}\Big)^{-1}.
\]
We have that
\begin{equation}
\begin{split}
\EE[\Reff_{(M^\star,c)}&(\uiptroot,B(R,d^\star)^c)^{-q}\indic{L_R=0}]
\leq \EE[\Reff_{(M^\star,c)}(\uiptroot,B(1,d^\star)^c)^{-q}\indic{L_R=0}]\\
&\leq \P(L_R=0)\EE\Big[\sum_{x\sim\uiptroot}
\sum_{e\in A_{\uiptroot x}}|e|_\star^{q}
\Big| L_R=0\Big]\\
&=\P(L_R=0)\sum_{n\geq1}n^{q}
\EE\Big[\sum_{x\sim\uiptroot}A_{\uiptroot x}^{(n)}\Big| L_R=0\Big].
\end{split}
\end{equation}
Here $A_{\uiptroot x}^{(n)}$ denotes the number
of edges $e=uv\in A_{\uiptroot x}$ such that 
$|e|_\star=d^\star(u^\star,v^\star)=n$.
Since $\P(L_R=0)$ is of order $R^{-1}$ this will
establish~\eqref{eq:R_obj} in the case $L_R=0$ provided
we show that the sum
\begin{equation}\label{eq:A_sum2}
\sum_{n\geq1}n^{q}
\EE\Big[\sum_{x\sim\uiptroot}A_{\uiptroot x}^{(n)}\Big| L_R=0\Big]
\end{equation}
is bounded by a constant.

We thus need bounds on $A_k^{(n)}$ and
$A_{\uiptroot x}^{(n)}$.  
The strategy will be to bound  firstly
the number of edges in $M$
from the decoration $\Dec_i$ to the decoration $\Dec_j$, 
and secondly the number 
of pairs $i$ and $j$ such that a given edge 
lies on the path from $s_i$ to
$s_j$ in $M^\star$.  This is the contents of the
following two lemmas.

For the first lemma we recall that, for $x,y\in M^\star$,
$\langle x,y\rangle$ denotes the first vertex
of $M^\star$ on the successor geodesics of 
both $x$ and $y$.
Recall the definition of a corner, just 
below~\eqref{contourseq}. 
Denote the set of corners around white vertices in
a decoration $\Dec_i$ by $\mathcal{C}_i$ and note that
$|\mathcal{C}_i| = |\Dec_i|$.
 For two corners $u$ and $v$ we write $uv\in M$
if there is an edge in $M$ 
between the vertices corresponding to $u$ and $v$. 
\begin{lemma}\label{geod_prob_est_lem}
Let $s_i,s_j\in M$ be such that $i<j$,
$n:=d^\star(s_i,s_j)\geq 2$, and
write $m:=d^\star(s_i,\langle s_i,s_j\rangle)$.
For any $r,r'>0$  there
is a constant $C>0$ such that
\[
\E[\#\{u\in \mathcal{C}_i, v\in \mathcal{C}_j: uv\in M\}\mid M^\star]
\leq C 
\frac{\EE(|\Dec_i|^{r/2+1})}{(m-1+\gd_{m,1})^{r}}
\frac{\EE(|\Dec_j|^{r'/2})}{(n-m+\gd_{m,n})^{r'}}.
\]
\end{lemma}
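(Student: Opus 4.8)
The plan is to reduce the count to successor links and then control, decoration by decoration, how far the labels must descend for such a link to cross from $\Dec_i$ to $\Dec_j$. Throughout I condition on $M^\star$, which fixes the spine vertices $(s_k)$ together with their labels, and hence fixes $n=d^\star(s_i,s_j)$ and $m=d^\star(s_i,\langle s_i,s_j\rangle)$; the essential point is that, given $M^\star$, the decorations $\Dec_i$ and $\Dec_j$ are independent (modified) Galton--Watson trees carrying independent label increments. By the BDG bijection every edge $uv\in M$ is a successor link, i.e.\ $v=\sigma(u)$ or $u=\sigma(v)$, where $\sigma$ is defined in~\eqref{succv}. Since $\sigma$ always proceeds forward in the contour, every $\Dec_i$--$\Dec_j$ link has a well-defined \emph{source} corner, namely the one whose successor is the other endpoint, and the source lies in whichever of the two decorations occurs earlier in the clockwise contour. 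I treat the orientation in which $\Dec_i$ precedes $\Dec_j$, so that $\Dec_i$ is the source of every such link; the opposite ordering is handled by the symmetric argument with the roles of $(i,m)$ and $(j,n-m)$ interchanged.

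The key step is the label bookkeeping. I first record the elementary fact that along a successor geodesic in $M^\star$ each step lowers the label by exactly $1$, so that $\ell(\langle s_i,s_j\rangle)=\ell(s_i)-m=\ell(s_j)-(n-m)$. Now consider a link $v=\sigma(u)$ with $u\in\mathcal{C}_i$ and $v\in\mathcal{C}_j$; then $\ell(v)=\ell(u)-1$, and for the contour started at $u$ to reach $\Dec_j$ it must pass the confluence $\langle s_i,s_j\rangle$ without the label first dropping to $\ell(u)-1$. Since the confluence sits at label $\ell(s_i)-m$, this forces
\[
\ell(u)\le \ell(s_i)-m+1,\qquad \ell(v)=\ell(u)-1\le \ell(s_j)-(n-m),
\]
that is, the displacement of $u$ inside $\Dec_i$ is at least $m-1$ and that of $v$ inside $\Dec_j$ is at least $n-m$. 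Writing $\Delta\ell_i,\Delta\ell_j$ for the maximal displacements in the two decorations, no such link can exist unless $\Delta\ell_i\ge m-1$ and $\Delta\ell_j\ge n-m$.

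Given these constraints the count is easy to dominate. Each of the $|\mathcal{C}_i|=|\Dec_i|$ corners of $\Dec_i$ has a single successor, so the number of links sourced in $\Dec_i$ is at most $|\Dec_i|$, and it vanishes unless both displacement events occur; hence
\[
\#\{u\in\mathcal{C}_i,\,v\in\mathcal{C}_j:uv\in M\}\le |\Dec_i|\,\indic{\Delta\ell_i\ge m-1}\,\indic{\Delta\ell_j\ge n-m}.
\]
Taking the conditional expectation given $M^\star$ and using the independence of $\Dec_i$ and $\Dec_j$ factorises the right-hand side. For the $\Dec_i$ factor I condition further on $\Dec_i$, apply Markov's inequality and Lemma~\ref{l:deltal} to obtain
\[
\E\big[|\Dec_i|\,\indic{\Delta\ell_i\ge m-1}\mid M^\star\big]\le \frac{\E\big[|\Dec_i|\,\E(\Delta\ell_i^{\,r}\mid\Dec_i)\big]}{(m-1)^{r}}\le \frac{C\,\E(|\Dec_i|^{r/2+1})}{(m-1)^{r}},
\]
and likewise $\P(\Delta\ell_j\ge n-m)\le C\,\E(|\Dec_j|^{r'/2})/(n-m)^{r'}$. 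The Kronecker corrections $\gd_{m,1}$ and $\gd_{m,n}$ are inserted precisely to keep the denominators $\ge 1$ in the boundary cases $m=1$ and $m=n$, where the corresponding displacement requirement becomes vacuous and the bound is retained by monotonicity of moments. Multiplying the two factors yields the claimed estimate.

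The main obstacle is the geometric content of the second paragraph: turning the informal assertion that a successor link \emph{must cross the confluence} into the rigorous inequalities on the in-decoration displacements. This requires reading the white contour of the full mobile $\vartheta$ against that of the truncated mobile $\vartheta^\star$, and in particular checking that the contour between the two decorations genuinely passes through a corner of $\langle s_i,s_j\rangle$ and that no intermediate decoration can intercept the successor at a higher label. One must also verify carefully that, for the given orientation of $i<j$, the source of every $\Dec_i$--$\Dec_j$ link does lie in $\Dec_i$, so that the asymmetric bound is the correct one. Once these structural facts are in place, the remaining steps are routine applications of Markov's inequality and Lemma~\ref{l:deltal}.
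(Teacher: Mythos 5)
Your overall strategy is the same as the paper's: reduce to successor links sourced in $\Dec_i$, show that any such link forces the displacement events $\Delta\ell_i\geq m-1$ and $\Delta\ell_j\geq n-m$, bound the count by $|\Dec_i|$ times the two indicators, and finish with independence of the decorations, Markov's inequality and Lemma~\ref{l:deltal}. The final inequalities you write down, including the role of the Kronecker deltas, are exactly those of the paper. However, the one step you yourself single out as the main obstacle is justified by a claim that is false, so as written there is a genuine gap. You assert that the contour from $u\in\mathcal{C}_i$ to $v=\sigma(u)\in\mathcal{C}_j$ ``must pass the confluence $\langle s_i,s_j\rangle$''. But $\langle s_i,s_j\rangle=\sigma^{n-m}(s_j)$ lies on the successor geodesic \emph{of $s_j$}, and successors always occur later in the contour; hence for $m<n$ every corner of $\langle s_i,s_j\rangle$ appears \emph{after} the block of corners of $\Dec_j$, so the contour segment from $u$ to $v$ never meets it. (When $m=n$ the confluence is $s_j$ itself, but then $\gd_{m,n}=1$ makes the corresponding constraint vacuous anyway.) Moreover, even granting your premise, ``passes through label $\ell(s_i)-m$ without first dropping to $\ell(u)-1$'' would yield $\ell(u)\leq\ell(s_i)-m$, not the inequality $\ell(u)\leq\ell(s_i)-m+1$ that you then state; the arithmetic does not match the geometry you invoke.

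The correct blocking vertex, which is what the paper uses, is $\sigma^{m-1}(s_i)$, the $(m-1)$st successor of $s_i$ \emph{in $M^\star$}. It has label $\ell(s_i)-m+1$ and, because the successor geodesics of $s_i$ and $s_j$ only merge after $m$ steps from $s_i$, it appears in the white contour sequence strictly before $s_j$ (for $m\geq2$; for $m=1$ the constraint is vacuous and $\gd_{m,1}$ absorbs it). If $\ell(u)>\ell(s_i)-m+1$, the contour from $u$ would have to pass through label $\ell(u)-1$ at or before a corner of $\sigma^{m-1}(s_i)$, i.e.\ before reaching $\Dec_j$, contradicting $\sigma(u)\in\mathcal{C}_j$. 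This gives $\ell(u)\leq\ell(s_i)-m+1$ and hence $\ell(v)\leq\ell(s_j)-(n-m)$, after which your factorisation and moment estimates go through verbatim. A minor further remark: since $i<j$ always places the corners of $\Dec_i$ before those of $\Dec_j$ in the contour sequence, there is no ``opposite ordering'' to handle; the asymmetry of the stated bound reflects precisely that the source is always in $\Dec_i$.
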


To simplify the counting of geodesics in $M^\star$,
we use the convention that the geodesic between
$s_i$ and $s_j$ is directed from $i$ to $j$ if
and only if $i<j$.
Writing $n=d^\star(s_i,s_j)$ and 
$m=d^\star(s_i,\langle s_i,s_j\rangle)$,
note that $0 < m\leq n$ and that the 
labels will first decrease for the first $m$ steps from $s_i$ to $s_j$ 
and increase for the  remaining $n-m$ steps.
Let $\Gamma_{n,m}^{(k)}$ be the number of  (directed)
geodesics which (i) contain the edge $(S^\star_{k-1},S^\star_k)$, 
(ii)  are of length $n$, and (iii) have decreasing labels 
on exactly the first $m$ steps. 

Note that in the definition of $\Gamma_{n,m}^{(k)}$,
the endpoints $s_i$ and $s_j$ are not
fixed. We will consider separately the case when one of the
endpoints is fixed to be $s_0$,
and thus is the root of the bad decoration. The reason 
is that the optimal exponent $r$ or $r'$ in 
Lemma~\ref{geod_prob_est_lem} is worse when the 
corresponding decoration is
the bad decoration $\Dec_0$, but this will be countered by the fact that the
number of geodesics is smaller when one of the endpoints 
is fixed. We define $\hat{\Gamma}_{n,m}^{(k,\alpha)}$, $\alpha
\in\{1,2\}$, to be the number of directed geodesics satisfying
(i)--(iii) and in addition that $s_i = s_0$ (resp.~$s_j = s_0$) if
$\alpha = 1$ (resp.~$\alpha = 2$).

\begin {lemma}\label{num_geod_est_lem}
There is a  constant $c>0$  such that
for any $\ell\geq 1$ and $0\leq k\leq \ell$ 
we have
\begin {equation}\label{eq:geo_free}
\E(\Gamma_{n,m}^{(k)}~|~L_R=\ell) \leq cn^2,
\end {equation}
\begin {equation} \label{eq:geo_i0}
\E(\hat{\Gamma}_{n,m}^{(k,1)}~|~L_R=\ell) 
\leq c(1 + (n-m) \indic{m\in \{\ell,\ell+1\}} ),
\end {equation}
\begin {equation} \label{eq:geo_j0}
\E(\hat{\Gamma}_{n,m}^{(k,2)}~|~L_R=\ell) 
\leq c(1+  m \indic{n-m\in\{\ell,\ell+1\}}).
\end {equation}
\end {lemma}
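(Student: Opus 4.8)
The plan is to translate the combinatorial quantities $\Gamma_{n,m}^{(k)}$ and $\hat\Gamma_{n,m}^{(k,\alpha)}$ into a computation on the label walk $(\ell(s_i))_{i\in\ZZ}$ and the bush structure of $M^\star$, and then to bound the resulting bush-size moments under the conditioning $\{L_R=\ell\}$ using the facts about $(Y_n)$ from the Appendix. Recall that the vertices of $M^\star$ are exactly the white vertices $(s_i)_{i\in\ZZ}$ of $\vartheta^\star$, with an edge joining each $s_i$ to its successor $\sigma(s_i)$ from~\eqref{succv}. Thus each successor geodesic $\geo^\star(s_i)$ decreases labels by one at every step, the confluence $\langle s_i,s_j\rangle$ of two such geodesics (with $i<j$ in contour order) sits at label $\min_{i\leq t\leq j}\ell(s_t)$, and $S^\star_k$ is the first forward vertex at label $\ell(\uiptroot)-k$. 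In terms of the running minimum of the forward walk, the spine vertices $S^\star_a$ are its ladder points and the bushes $W^\star_a$ are the excursions above level $\ell(\uiptroot)-a$; each $s_i$ attaches to the spine at a well-defined level $a_i$ (the value of $a$ with $s_i\in W^\star_a$) at height $h_i=\ell(s_i)-(\ell(\uiptroot)-a_i)\geq0$ above $S^\star_{a_i}$. The two cases $\epsilon=\pm1$ differ only by a shift of $\ell(\uiptroot)$ and are handled as in the other proofs of this section.

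With this description the geometry of a crossing becomes explicit. Since $M^\star$ is a tree, the geodesic from $s_i$ to $s_j$ contains the spine edge $(S^\star_{k-1},S^\star_k)$ if and only if $s_i$ and $s_j$ lie on opposite sides of it, i.e.\ $\min(a_i,a_j)<k\leq\max(a_i,a_j)$; moreover, after relabelling so that the decreasing part starts at $s_i$, that part has length $m=h_i+\max(a_i,a_j)-a_i$ and the increasing part has length $n-m=h_j$. Consequently I would write, up to a bounded combinatorial factor accounting for contour orientation,
\[
\Gamma_{n,m}^{(k)}\leq C\sum_{a_i<k\leq a_j,\ a_j-a_i\leq m}|W^\star_{a_i}(h_i)|\,|W^\star_{a_j}(h_j)|,
\]
where $h_j=n-m$ and $h_i=m-(a_j-a_i)$ are determined by the summation indices. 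For the hatted quantities one endpoint is fixed to be the bad-decoration root $s_0$, which sits at the top of the spine and so forces its attachment level and height to vanish, collapsing one of the two sums: fixing $s_i=s_0$ gives $a_i=0$, $h_i=0$, hence $a_j=m$ and $\hat\Gamma_{n,m}^{(k,1)}=|W^\star_m(n-m)|$, while fixing $s_j=s_0$ puts the confluence at level $a_i$, hence $a_i=n-m$ and $\hat\Gamma_{n,m}^{(k,2)}=|W^\star_{n-m}(m)|$.

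It then remains to take conditional expectations and sum. The bushes $(W^\star_a)_a$ are independent critical Galton--Watson trees with offspring law $\pi_i=2^{-i-1}$, which has finite variance, so the classical estimates for conditioned critical trees apply. Conditioning on $\{L_R=\ell\}$ forces $W^\star_a$ for $a<\ell$ to die out before level $R$, forces $W^\star_\ell$ to survive to level $R$, and leaves the bushes with $a>\ell$ essentially unconstrained. Using the generating-function identities for $(Y_n)$ and in particular \eqref{yn0}, the two estimates I need are that $\E(|W^\star_a(h)|\mid L_R=\ell)=O(1)$ uniformly for any bush conditioned to die before level $R$, and that for the surviving bush $\E(|W^\star_\ell(h)|\mid L_R=\ell)=O(h)$ for $h\leq R$, the latter reflecting the linear growth of a critical process conditioned to survive. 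Substituting into the displayed sums: in the free case the constraints $a_i<k\leq a_j$ and $a_j-a_i\leq m$ leave $O(m^2)$ admissible pairs, each contributing $O(1)$, except that the single value $a_j=\ell$ may contribute the linear factor $O(n-m)$ over $O(m)$ pairs; since $m\leq n$ this yields~\eqref{eq:geo_free}. In the fixed-endpoint cases a single bush survives the collapse, so the bound reduces to $O(1)$ unless the relevant index ($m$ for $\alpha=1$, respectively $n-m$ for $\alpha=2$) coincides with the separating level $\ell$ (or, allowing for an off-by-one in the exact location of the crossing edge relative to $z_R$, $\ell+1$), in which case the surviving-bush estimate produces the advertised linear factors; this gives~\eqref{eq:geo_i0} and~\eqref{eq:geo_j0}.

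The main obstacle is the third step: obtaining the conditional bush-size first moments uniformly in $R$, $\ell$ and $k$. The bound for bushes conditioned to die is delicate because the conditioning event can have small probability when $R-a$ is small, and the linear bound for the surviving bush needs the precise asymptotics of $\P(Y_n=0)$ and of the conditioned first moment of $Y_n$; these are exactly the facts about $(Y_n)$ that I would isolate in the Appendix. The remaining work is bookkeeping, namely verifying the bounded orientation factor in the first displayed inequality and pinning down the $\{\ell,\ell+1\}$ boundary cases in the two fixed-endpoint estimates.
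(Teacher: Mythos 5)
Your argument is essentially the paper's proof in different coordinates: you decompose crossing geodesics according to the levels $a_i,a_j$ at which their endpoints' bushes attach to the spine of $M^\star$ and the heights $h_i,h_j$ within those bushes (the paper uses the equivalent parameters $n_1,n_2,n_3,n_4$ with $n_1+n_2+n_3=m$, $n_4=n-m$), you use the conditional independence of the bushes given $\{L_R=\ell\}$, and you reduce everything to exactly the two first-moment bounds $\E(Y_i\mid Y_j=0)=O(1)$ and $\E(Y_i\mid Y_j>0)=O(i)$ that the paper isolates as Lemma~\ref{l:xixj}. The identification of the collapsed sums for $\hat\Gamma^{(k,\alpha)}_{n,m}$ and of the source of the linear terms is also the same as in the paper (the $\{\ell,\ell+1\}$ dichotomy actually comes from the $\epsilon=\pm1$ shift of $s_0$ relative to $\uiptroot$, i.e.\ $h_i\in\{0,1\}$, rather than from the location of the crossing edge, but you correctly flag that an off-by-one must be tracked there).

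The one place where your write-up is genuinely deficient is the claim that the reverse orientation is absorbed into ``a bounded combinatorial factor.'' Your displayed master inequality, with the constraint $a_i<k\leq a_j$ and the identifications $h_j=n-m$, $h_i=m-(a_j-a_i)$, only counts geodesics whose decreasing part runs \emph{away} from the root along the spine (the paper's Case~(1)). The geodesics whose spine segment is directed \emph{towards} the root form a separate family, parametrized by $a_j<k\leq a_i$ with $m=h_i$ and $n-m=(a_i-a_j)+h_j$, and their contribution is of order $(n-m)^2+m(n-m)$ --- not a constant multiple of your Case-(1) total $m^2+m(n-m)$. (Take $m=1$ and $n$ large: Case~(1) gives $O(n)$ while the omitted case gives $O(n^2)$, which is why the lemma's bound is $cn^2$ and not $cnm$.) The omission is repaired by running the identical argument with $m$ and $n-m$ interchanged, exactly as the paper does by splitting $\Gamma^{(k)}_{n,m}=\Gamma^{(k,1)}_{n,m}+\Gamma^{(k,2)}_{n,m}$, so the approach is sound; but as written the inequality you display is not an upper bound for $\Gamma^{(k)}_{n,m}$, and the ``bounded factor'' justification would not survive scrutiny.
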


The same bounds apply to 
$\E(\sum_{x\sim\uiptroot}\Gamma_{n,m}^{(\uiptroot x)}\mid L_R=0)$
but we leave the details to the reader.
One small difference in the proof
is that one must use the
fact that the number of neighbours of $\uiptroot$
in $M^\star$ has finite second moment, but
apart from this it is very similar to the
proof of Lemma~\ref{num_geod_est_lem}.

Before proving Lemmas~\ref{geod_prob_est_lem} 
and~\ref{num_geod_est_lem} we show how they 
give~\eqref{eq:R_obj} and hence Lemma~\ref{lem:lres}.

\begin{proof}[Proof of~\eqref{eq:R_obj}]
We need to show that the two sums~\eqref{eq:A_sum1}
and~\eqref{eq:A_sum2} are bounded by constants.
 We start with~\eqref{eq:A_sum1}, assuming until
further notice that $L_R>0$.  Clearly 
$\E(A^{(n)}_k\mid L_R)=\E\big(\E(A^{(n)}_k \mid M^\star)\mid L_R\big)$. 
We can write
\begin{equation}\label{pij_eq}
\E( A^{(n)}_k \mid M^\star)=\sum_{m=1}^n
\sideset{}{^{'}}\sum_{i<j}
\E[\#\{u\in \mathcal{C}_i, v\in \mathcal{C}_j: uv\in M\}\mid M^\star],
\end{equation}
where the primed sum is over all integers $i<j$ such that
\begin{itemize}
\item $d^\star(s_i,s_j)=n$,
\item the edge $(S^\star_{k-1},S^\star_k)$ lies
on the geodesic from $s_i$ to $s_j$ in $M^\star$, and
\item the labels on the first $m$ steps on this geodesic
are decreasing.
\end{itemize}
Note that  
Lemma~\ref{num_geod_est_lem} bounds the number of terms in this sum,
whereas Lemma~\ref{geod_prob_est_lem} bounds
the summand.

First note that we may assume that $n\geq2$:
for $n=1$ the sum~\eqref{pij_eq} consists of a 
single term which we may bound by a constant.
For $n\geq 2$ we split the primed sum in~\eqref{pij_eq}
into three parts:  firstly, the
sum over $j>0$ with $i=0$ fixed, secondly the
sum over $i<0$ with $j=0$ fixed, and thirdly
the sum over $i$ and $j$ both not equal to 0.
This will allow us to apply the corresponding
bounds of Lemma~\ref{num_geod_est_lem}.

We apply Lemma~\ref{geod_prob_est_lem} 
with the following choices of
$r,r'$:  $r=2\gb-2$ if $i\neq0$,
$r=2\gb-4$ if $i=0$, 
$r'=2\gb$ if $j\neq0$, and
$r'=2\gb-2$ if $j=0$.  By 
Lemma~\ref{l:ximoment}, and since $\E(\xi^\gb)<\infty$,
the expectations in Lemma \ref{geod_prob_est_lem} are finite for these choices and may
be bounded by the same constant.  On applying 
Lemma~\ref{num_geod_est_lem} we 
therefore find that (for $n\geq 2$) 
there is a $c>0$ such that
\begin{equation}
\begin{split}
\E( A^{(n)}_k \mid M^\star)
\leq c\sum_{m=1}^n\Big[&\frac{n^2}
{(m-1+\gd_{m,1})^{2\gb-2}(n-m+\gd_{m,n})^{2\gb}}\\
&+\frac{1+(n-m)}
{(m-1+\gd_{m,1})^{2\gb-4}(n-m+\gd_{m,n})^{2\gb}}\\
&+\frac{1+m}{(m-1+\gd_{m,1})^{2\gb-2}(n-m+\gd_{m,n})^{2\gb-2}}\Big].
\end{split}
\end{equation}
Note that for $a,b>1$,
\begin{equation}
\sum_{m=1}^n\frac{1}{(m-1+\gd_{m,1})^a(n-m+\gd_{m,n})^b}
\leq c\Big(\frac{1}{n^a}+\frac{1}{n^b}\Big)
\end{equation}
for some $c>0$. 
We deduce that, for some constant $c'>0$,
\begin{equation}
\E(A^{(n)}_k\mid L_R)\leq c'\frac{1}{n^{2\gb-4}},
\end{equation}
and hence in~\eqref{eq:A_sum1}
\[
\sum_{n\geq1}n^q\E(A^{(n)}_k\mid L_R)\leq
c'\Big(1+\sum_{n\geq2}
\frac{1}{n^{2\gb-4-q}}\Big).
\]
This is finite for $0<q<2\gb-5$, as required.
The argument for~\eqref{eq:A_sum2}, when
$L_R=0$, is the same using the remark
immediately below Lemma~\ref{num_geod_est_lem}.
\end{proof}

\begin{proof}[Proof of Lemma~\ref{geod_prob_est_lem}]
Writing $z=\langle s_i,s_j\rangle$, 
note that 
\begin{equation}
\ell(z)=\ell(s_i)-m,\mbox{ and thus }
\ell(s_j)=\ell(s_i)-2m+n.
\end{equation}
Note that the $(m-1)$st successor 
$\sigma^{m-1}(s_i)$ of 
$s_i$ in $M^\star$ appears before $s_j$
in the contour sequence of the mobile 
$\vartheta$.
For there
to be an edge from $u\in\mathcal{C}_i$ to 
$v\in\mathcal{C}_j$ it cannot
be the case that the label of $\sigma^{m-1}(s_i)$ is
strictly smaller than that of $u$.  Thus
\begin{equation}
\ell(u)\leq \ell(\sigma^{m-1}(s_i))=\ell(s_i)-m+1.
\end{equation}
Furthermore, if $uv\in M$ then
\[
\ell(v)=\ell(u)-1\leq \ell(s_i)-m=\ell(s_j)+m-n,
\]
so that 
$\Delta\ell_j\geq\ell(s_j)-\ell(v)\geq n-m$.
Thus, using that $\Dec_i$ and 
$\Dec_j$ are independent 
of $M^\star$ and of each other,
\begin{equation}
\begin{split}
\E[\#\{u\in \mathcal{C}_i,& v\in \mathcal{C}_j: uv\in M\}\mid M^\star]\\
&\leq \E[\indic{\Delta\ell_j\geq n-m}
\#\{u\in \mathcal{C}_i: \ell(u)\leq \ell(s_i)-m+1\}\mid M^\star]\\
&\leq \P(\Delta\ell_j\geq n-m)
\E(|\Dec_i|\indic{\Delta\ell_i\geq m-1})\\
&= \P(\Delta\ell_j\geq n-m)
\E\big[|\Dec_i|\P(\Delta\ell_i\geq m-1\mid\Dec_i)\big].
\end{split}
\end{equation}
The result now follows from Lemma~\ref{l:deltal}
and Markov's inequality.
\end{proof}

\begin{proof}[Proof of Lemma~\ref{num_geod_est_lem}]
Start by considering~\eqref{eq:geo_free}. 
We treat two cases separately, Case (1)
when the part of the geodesic intersecting the spine is directed away
from the root and Case (2) when it is directed towards the root. Write
$\Gamma_{n,m}^{(k)} = \Gamma_{n,m}^{(k,1)} + \Gamma_{n,m}^{(k,2)}$ where
$\Gamma_{n,m}^{(k,\alpha)}$ denotes the number of geodesics in case ($\alpha$), 
$\alpha \in \{1,2\}$.  See Fig.~\ref{f:drawing}. 
\begin{figure} [t]
\centerline{\scalebox{0.33}{\includegraphics{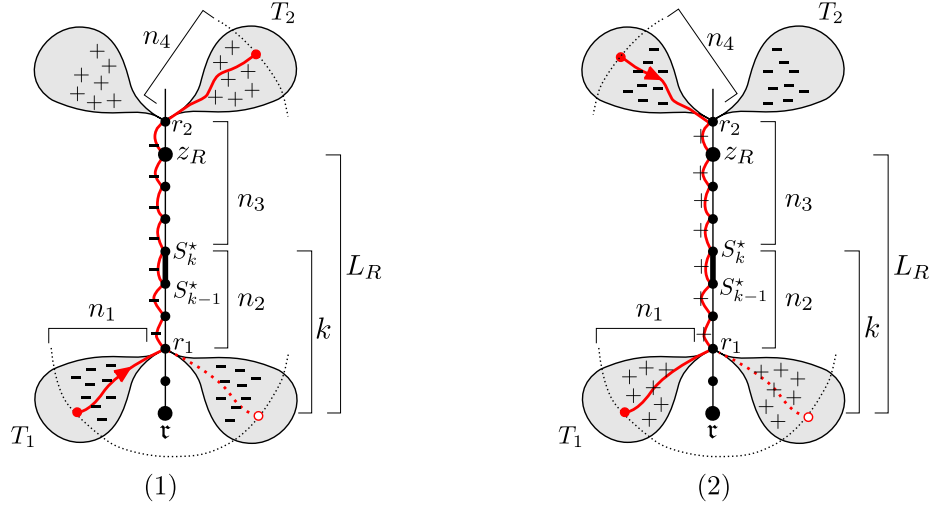}}}
\caption{The vertical path represents the spine in $M^\star$ and only the outgrowths $T_1$ and $T_2$ are drawn. In Case (1) a geodesic starts at level $n_1$ in $T_1$ and
  ends at level $n_4$ in the right hand part of $T_2$.  In Case (2) a geodesic starts in the left hand part of $T_2$
  and ends at level $n_1$ in $T_1$.   The $+$ and $-$ signs indicate whether the labels on the geodesic increase ($+$) or decrease ($-$) when one follows the direction of the geodesic.} \label{f:drawing}
\end{figure}

In Case (1), the geodesic starts in an outgrowth from a vertex, say
$r_1$, on the spine. Denote the collection of outgrowths from $r_1$ by
$T_1$. It then proceeds $m$ steps in the direction of decreasing
labels. Since it crosses the edge $S^\star_{k-1},S^\star_k$ on the
spine, then after $m$ steps it must enter an outgrowth from a vertex
on the spine, say $r_2$, which is different from $r_1$. Denote the
collection of outgrowths from $r_2$ by $T_2$. By the definition of the
direction of the geodesic it has to enter $T_2$ on the right hand side
of the spine.  We cut the geodesic into four parts. The first part is
from the starting point until it hits $r_1$, call the length of that
part $n_1$. The next part is from $r_1$ to the vertex $S^\star_k$,
call its length $n_2 \geq 1$. The third part is from $S^\star_k$ to $r_2$,
call its length $n_3$ and the final part is from $r_2$ to the end,
call its length $n_ 4$. Then
\begin {eqnarray}
n_1+n_2+n_3 = m \label{123m}\\
n_4 = n-m \label{4nm}\\
d^\star(r_1,\uiptroot) = k-n_2 \nonumber\\
d^\star(r_2,\uiptroot) = k+n_3 \nonumber
\end {eqnarray}
Conditional on $L_R$ the distributions of $T_1$ and $T_2$ 
are given by
\begin {equation}
 T_1 = (W^\star_{k-n_2}~|~W^\star_{k-n_2}(R-k+n_2)=\varnothing)
\end {equation}
and
\begin {equation}
 T_2 = \left\{\begin{array}{cl}
   (W^\star_{k+n_3}~|~W^\star_{k+n_3}(R-k-n_3)=\varnothing) 
& \text{if}~k+n_3 < L_R \\
(W^\star_{k+n_3}~|~W^\star_{k+n_3}(R-k-n_3)\neq\varnothing) 
& \text{if}~k+n_3 = L_R < R/2 \\
W^\star_{k+n_3} & \text{if}~k+n_3 = L_R = R/2 \\
& \text{or}~k+n_3 > L_R. 
  \end{array}\right.
\end {equation}
It follows that
\begin{align} \nonumber
 \mathbb{E}(\Gamma_{n,m}^{(k,1)}~|~L_R) &\leq 
\sum_{\substack{n_1+n_2+n_3=m \\ n_2\geq 1, n_3 \leq L_R-k-1}}
\mathbb{E}(Y_{n_1}~|~Y_{R-k+n_2}=0)\mathbb{E}(Y_{n_4}~|~Y_{R-k-n_3}=0) \\
& \qquad \quad + \big(\mathbb{E}(Y_{n_4}~|~Y_{R-L_R} >0)\indic{L_R < R/2}+\mathbb{E}(Y_{n_4})\indic{L_R=R/2}\big) 
 \nonumber \\
&\qquad \qquad \qquad \quad \times \sum_{\substack{n_1+n_2=m-L_R+k \nonumber \\ n_2\geq 1}} 
\mathbb{E}(Y_{n_1}~|~Y_{R-k+n_2} = 0)\\ \nonumber
& \qquad \quad+ \mathbb{E}(Y_{n_4}) 
\sum_{\substack{n_1+n_2+n_3=m \\ n_2\geq 1,  L_R-k+1\leq n_3 \leq m}} 
\mathbb{E}(Y_{n_1}~|~Y_{R-k+n_2}=0). \nonumber \\ \label{gamma1estimate}
\end{align}
The inequality sign is only due to the fact that we replace the
number of elements at level $n_4$ in the part of $T_2$ on the right
hand side of the spine by the total number of elements at level $n_4$
in $T_2$. Using Lemma~\ref{l:xixj} and recalling that $n_4 = n-m$ we get the upper bound
\begin {equation}
 \mathbb{E}(\Gamma_{n,m}^{(k,1)}~|~L_R) \leq k_1 m^2 + k_2 (n-m)m
\end {equation}
where $k_1$ and $k_2$ are positive constants.  

In Case (2) we have a
similar picture. The geodesic starts from the part of 
$T_2$ which lies to the left of the spine and ends in
$T_1$. In this case (\ref{123m}) and (\ref{4nm}) are replaced by the
conditions
\begin {eqnarray}
n_1+n_2+n_3 = n-m \\
n_4 = m, 
\end {eqnarray}
but everything else is the same. We thus get
\begin {equation}
 \mathbb{E}(\Gamma_{n,m}^{(k,2)}~|~L_R)  \leq k_3 (n-m)^2 + k_4 (n-m)m
\end {equation}
where $k_3$ and $k_4$ are positive constants. 

Now turn to~\eqref{eq:geo_i0}. In this case the origin of the geodesic
is $s_0$ and thus contains the bad decoration.  Recall that $\epsilon$
denotes the direction of the root edge in the BDG bijection.  When
$\epsilon = -1$ the root edge is directed away from $s_0$ and thus by
the definition of $\uiptroot$, $\uiptroot = s_0$. We may then go trough
the same argument as for~\eqref{eq:geo_free}, 
Case (1) except now $n_1 = 0$, $n_2= k$ and one 
does not have to take the distribution of $T_1$ into
account. Similarly, when $\epsilon = +1$, $s_0$ is the leftmost child
of $\uiptroot$ and in this case $n_1 = 1$ and $n_2 = k$. Using these
values, \eqref{gamma1estimate} may be estimated by the expression
stated in~\eqref{eq:geo_i0}.   Finally,~\eqref{eq:geo_j0}  
follows in the same way
but corresponds to Case (2) in~\eqref{eq:geo_j0} 
and thus $m$ is replaced by $n-m$.
\end {proof}

\ack{The authors would like to thank Svante Janson for
several  interesting
  discussions, 
Takis Konstantopoulos for interesting discussions about 
Lemma~\ref{kmt_lem},
and the anonymous referee for helpful suggestions. 
SÖS is grateful for hospitality at NORDITA.}

\section{Appendix}
In this section we prove Lemmas \ref{l:ximoment}, \ref{kmt_lem} and \ref{l:geosup} and conclude by collecting a few results on the modified Galton Watson process $(Y_n)_{n\geq 0}$ defined  above \eqref{lrdist1}.
\begin {proof} [Proof of Lemma \ref{l:ximoment}]
Let $Z$ be a Galton--Watson tree with offspring distribution $\xi$ and denote the total number of vertices in $Z$ by $N$. First consider the case $i\neq 0$. Then the  bijection between simply generated trees and mobiles shows that $|\Dec_i|\eqd N$.

We can interpret $N$ as a first-passage time in a random walk
with drift, according to the following
standard `depth-first-search' construction of $Z$.   
First sample the number $\xi_1$ of children of
the root.  If $\xi_1=0$ we are done (and the tree has size 1).  
Otherwise pick the `leftmost' of the children of the root
and independently of $\xi_1$ sample its number $\xi_2$ of 
children.  If $\xi_2>0$ we repeat this for the new leftmost
child, otherwise we repeat the procedure for the leftmost
of the remaining $\xi_1-1$ children of the root that have not
yet been `investigated'.  
The same procedure is repeated until the entire tree
has been constructed.
By considering the number of vertices
left to investigate after $k$ steps in this construction
we see that $N$ is the smallest value of $k$ such
that the random walk
\begin{equation}
\sum_{j=1}^k (\xi_j-1)
\end{equation}
reaches level $-1$.  (This random walk is
sometimes called the \emph{Lukasiewicz path}
of $Z$.)
Since $\E(\xi_i-1)=\kappa-1<0$
it follows from~\cite[Theorem~3.3.1]{gut:stopped}
that 
\begin{equation}\label{gut_eq}
\E(N^r)<\oo \mbox{ provided } \E(\xi^r)<\oo, 
\end{equation}
as required.

The argument for $\Dec_0$ is similar, but the bijection
from simply generated trees has a more complex result this time.
The vertices of $\Dec_0$ consist both of 
the outgrowth from the infinite degree 
vertex in $\sgt$ which is immediately to the right of the spine, as well as the vertices of 
finite degree on the spine of $\sgt$, along with their outgrowths.
The length $L$ of the
spine satisfies $\P(L=i)=\kappa^i(1-\kappa)$.
The number of outgrowths of a vertex of finite degree 
on the spine is distributed as $\tilde\xi$,
where 
\begin{equation}
\P(\tilde\xi=k)=\P(\hat\xi-1=k\mid\hat\xi<\oo)=
\frac{(k+1)\pi_{k+1}}{\kappa}
\end{equation}
(see~\eqref{hat_xi_eq}).  
Thus for all $s\geq0$
\begin{equation}\label{tilde_xi_mom_eq}
\E(\tilde\xi^s)\leq
\frac{1}{\kappa}\E(\xi^{s+1}).
\end{equation}

Let $(\tilde\xi_i)_{i\geq 1}$ be
independent copies of $\tilde\xi$, and let $N$
and $(N_{i,j})_{i,j\geq 1}$ be independent copies
of $N$ above, all independent of each other and
of $L$.  
It follows from the description above
that for all $s\geq0$
\begin{equation}\label{chi0_bound_eq}
\E(|\Dec_0|^s)\leq
\E\Big[\Big(N+\sum_{i=1}^L\sum_{j=1}^{\tilde\xi_i}N_{i,j}\Big)^s\Big].
\end{equation}
Let $s=r-1$.  If $s\geq 1$ it follows 
from~\eqref{chi0_bound_eq} 
(conditioning on $L$ and the $\tilde\xi_i$,
and using Minkowski's inequality),
that
\begin{equation}
\||\Dec_0|\|_s\leq \|N\|_s\big(1+\|L\|_s\|\tilde\xi\|_s\big),
\end{equation}
which is finite by~\eqref{gut_eq}
and~\eqref{tilde_xi_mom_eq}.
Similarly, if $0<s<1$ then by using
subadditivity and Jensen's inequality we get that
\begin{equation}
\E(|\Dec_0|^s)\leq
\E(N^s)+\E(L)\E(N)^s\E(\tilde\xi^s),
\end{equation}
which is finite by~\eqref{gut_eq}
and~\eqref{tilde_xi_mom_eq}.
\end{proof}

\begin{proof}[Proof of Lemma \ref{kmt_lem}] 
Note that $i^+(R)$ and $i^-(R)$ can be written in terms
of first passage times of the random walk 
$S_n = \sum_{i=1}^n X_i$ with the i.i.d.~jump distribution $X_i$
described at~\eqref{bridgelaw}:
\[
\begin{split}
i^+(R) &=\inf\{n\geq 0: S_n=-R\}=\inf\{n\geq 0: S_n\leq-R\},\\
i^-(R) &=\inf\{n\geq 0: S_n\geq R\}.
\end{split}
\]
For this random walk the distributions of such first
passage times can be computed explicitly
(using an exponential martingale argument).  However,
we use a more general argument
based on approximating the random walk with 
Brownian motion, and well-known results for first passage
times of the latter.  

Let $S(t)$ denote the process obtained from $S_n$
by linear interpolation between integer times.
The $X_i$ of~\eqref{bridgelaw}
have mean 0 and variance 2, so the 
Koml\'os--Major--Tusn\'ady 
theorem~\cite{kmt}
tells us that we may couple $(S(t):t\geq0)$ with a standard
Brownian motion $(W(t):t\geq0)$ in such a way that
for some constants $c,\eps>0$ and
all $T,x>0$ we have
\begin{equation}\label{kmt_eq}
\P\Big(\max_{0\leq t\leq T}|S(t)/\sqrt{2}-W(t)|>c\log T+x\Big)
\leq e^{-\eps x}.
\end{equation}
Let $K(\l)=K'\log\l$, with the constant $K'$ chosen large enough
that
\[
K(\l)R>2c\log R+(c+1/\eps)\log \l,\quad\mbox{for all }R,\l\geq1.
\]
We have that
\[
\begin{split}
\P(i^-(R)>\l R^2)&=
\P\Big(\max_{0\leq n\leq \l R^2} S_n<R\Big)\\
&\leq\P\Big(\max_{0\leq t\leq \l R^2} W(t)<(K(\l)+1/\sqrt{2})R\Big)\\
&\qquad+\P\Big(\max_{0\leq t\leq \l R^2} |W(t)-S(t)/\sqrt{2}|>K(\l)R\Big).
\end{split}
\]
The first term is at most
\[
\frac{2}{\sqrt{2\pi}}(K(\l)+1/\sqrt{2})R(\l R^2)^{-1/2}
\leq K'' \frac{\log \l}{\l^{1/2}}
\]
by standard results for Brownian motion
(by the reflection principle, the probability that
$W$ has not exceeded $a$ by time $t$ equals
$2\P(W(t)>a)$).  By~\eqref{kmt_eq}
the second term is at most
\[
\P\Big(\max_{0\leq t\leq \l R^2} |W(t)-S(t)/\sqrt{2}|>
c\log(\l R^2)+\tfrac{1}{\eps}\log\l\Big)\leq \l^{-1}.
\]
This proves the bound for $i^-(R)$.  The bound for
$i^+(R)$ is similar.
\end{proof}

\begin {proof} [Proof of Lemma \ref{l:geosup}]
We will need the following easily proved result. If $v \in \gamma(u)$ then
\begin {equation} \label{geodist}
d(u,v) = \ell(u)-\ell(v). 
\end {equation}
If $u=s_i$ and $v=s_j$ with $i<j$
define $]u,v[ ~=
\{s_{i+1},s_{i+1},\ldots,s_{j-1}\}$.

Consider first the case $v^\star = w^\star$.
Let $\lambda$ denote the minimal label in $\Dec(v)$ and
let $x$ denote the first white vertex in the contour sequence
which, firstly, occurs after the last occurrence of $v^\star$, 
and, secondly, has label $\ell(x)=\lambda-1$.  Then
$\gamma(v)$ and $\gamma(w)$ both contain $x$, so
by \eqref{geodist}
\begin{equation}\label{samedec}
\begin{split}
d(v,w)&\leq d(v,x)+d(w,x)\\
&=\ell(v)-(\lambda -1)+\ell(w)-(\lambda -1)
\leq 2+4\Delta\ell(\Dec(v)).
\end{split}
\end{equation}

Consider now the case $v^\star \neq w^\star$.
Without loss of generality we may
assume that $v^\star \wedge w^\star = v^\star$. 
First assume that $w^\star \in \geo^\star(v^\star)$ 
in which case 
$[v^\star\wedge w^\star,\langle v^\star,w^\star \rangle] = 
[v^\star, w^\star]$. Then
\begin {equation} \label{longeo}
 \ell(u) > \ell(w^\star) 
\end {equation}
for all $u\in ]v^\star,w^\star[$.  Define 
$A = \bigcup_{u\in ]v^\star,w^\star[} V^\circ(\Dec(u))$ 
and let $\lambda = \min_{u\in A} \ell(u)$, 
assuming that the minimum is attained at the vertex
$y$ (possibly among others). Then $y^\star \in ]v^\star,w^\star[$
and by (\ref{longeo}) it holds that
\begin {equation} \label{lambda1lower}
\lambda = \ell(y) \geq \ell(y^\star) - \Delta\ell(\Dec(y)) \geq \ell
(w^\star)+1 - \Delta\ell(\Dec(y)) .
\end {equation}
Consider the successor geodesics $\succg(v^\star)$ and
$\succg(w^\star)$ in $M$. They will meet at the first time at a
vertex, say $z$, with label $\ell(z) = \lambda -1$.  Then, by
\eqref{geodist} and (\ref{lambda1lower})
\begin {equation} \label{ubwstarz}
 d(w^\star,z) = \ell(w^\star) - \ell(z) \leq \Delta\ell(\Dec(y)).
\end {equation}
Finally, 
\begin{equation} \label{ubcaseii}
\begin{split}
d(v,w) &\leq 
 d(v,v^\star) + d(v^\star,z) + d(z,w^\star)+d(w^\star,w) \\ 
&=
d(v,v^\star) + \ell(v^\star)-\ell(z) + \ell(w^\star)-\ell(z) + d(w^\star,w) \\
&=
d^\star(v^\star,w^\star) +2(\ell(w^\star)-\ell(z)) 
  +d(v,v^\star) + d(w,w^\star) \\
&\leq 
d^\star(v^\star,w^\star) +  2 \Delta\ell(\Dec(y)) 
    + 4\Delta\ell(\Dec(v)) + 4\Delta\ell(\Dec(w)) + 4. 
\end{split}
\end{equation}
In the first line we  used the triangle inequality,
in the second line (\ref{geodist}), in the third line
the fact that $w^\star\in\gamma^\star(v^\star)$ implies 
$d^\star(v^\star,w^\star)=\ell(v^\star)-\ell(w^\star)$,
and in the fourth line (\ref{ubwstarz}) as well as~\eqref{samedec}. 

Finally consider the case when $w^\star \notin \geo^\star(v^\star)$.
Write $z=\langle v^\star,w^\star \rangle$.
Then $z=z^\star\in\gamma^\star(v^\star)\cap\gamma^\star(w^\star)$
and $d(v,w)\leq d(v,z)+d(w,z)$.  Applying~\eqref{ubcaseii}
to both terms and using the fact that
$d^\star(v^\star,z)+d^\star(w^\star,z)=d^\star(v^\star,w^\star)$
we arrive at the claimed bound.
\end{proof}

\subsection{The process $(Y_n)_{n\geq 0}$}
We collect here a few results which we need on the process $(Y_n)_{n\geq 0}$ which is defined above \eqref{lrdist1}.
Recall that $\pi_i = 2^{-i-1}$. Define the generating functions
\begin {equation}
 f(x) = \sum_{i=0}^\infty \pi_i x^i \quad \text{and} \quad 
g_n(x) = \sum_{i = 0}^\infty \P(Y_n = i)x^i.
\end {equation}
Clearly, $g_0(x) = x$ and assume in the following that $n\geq 1$. By standard generating function arguments and using induction one finds that
\begin {equation} \label{gnformula}
 g_n(x) = f'(f_{n-1}(x)) = \left(\frac{n-(n-1)x}{n+1-nx}\right)^2
\end {equation}
where $f_0(x) = x$ and $f_n(x) = f(f_{n-1}(x))$. 
By \eqref{gnformula} one immediately gets
\begin {equation} \label{yn0}
\P(Y_n = 0) = g_n(0) = \left(\frac{n}{n+1}\right)^2.
\end {equation}
The following Lemma is used in the proof of Lemma \ref{num_geod_est_lem}.
\begin {lemma} \label{l:xixj} 
For all $i,j\geq 1$ we have that
$\E(Y_i \mid Y_j = 0) \leq 2$
and that
$\E(Y_i \mid Y_j > 0) \leq 4 i + 4$.
\end {lemma}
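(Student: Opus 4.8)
The plan is to reduce everything to the one–variable generating functions recorded in \eqref{gnformula}--\eqref{yn0}. Writing $f(x)=\sum_i\pi_ix^i=1/(2-x)$, its iterates are the M\"obius maps $f_m(x)=\frac{m-(m-1)x}{(m+1)-mx}$ (check by induction from $f_m=f\circ f_{m-1}$), so the probability that an \emph{ordinary} $\pi$--Galton--Watson tree is extinct by generation $m$ is $f_m(0)=m/(m+1)$. The key observation is that, since only the step from generation $0$ to generation $1$ uses the special law and we are assuming $i\geq1$, conditioning on $Y_i=k$ makes the continuation up to any later generation $j>i$ a sum of $k$ independent ordinary $\pi$--trees; hence $\P(Y_j=0\mid Y_i)=\rho^{Y_i}$ with $\rho:=f_{j-i}(0)$. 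This yields the master identity
\[
\E(Y_i\indic{Y_j=0})=\sum_{k\geq0}k\,\P(Y_i=k)\,\rho^k=\rho\,g_i'(\rho),\qquad \rho=\tfrac{m}{m+1},\ \ m:=j-i .
\]
I would also record the unconditional mean $\E(Y_n)=g_n'(1)=2$ for every $n\geq1$, read off from \eqref{gnformula}.

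For the first bound I would first dispose of the trivial range $i\geq j$, where $\{Y_j=0\}\subseteq\{Y_i=0\}$ forces $\E(Y_i\mid Y_j=0)=0\leq2$. For $i<j$ I would differentiate \eqref{gnformula} to get $g_i'(x)=2\bigl(i-(i-1)x\bigr)\bigl(i+1-ix\bigr)^{-3}$, substitute $\rho=m/(m+1)$ (which simplifies the two linear factors to $j/(m+1)$ and $(j+1)/(m+1)$), and divide by $g_j(0)=(j/(j+1))^2$ from \eqref{yn0}. The result collapses to $\E(Y_i\mid Y_j=0)=2m(m+1)/\bigl(j(j+1)\bigr)$ with $m=j-i$, and since $i\geq1$ gives $m\leq j-1$, this is at most $\frac{2(j-1)j}{j(j+1)}<2$, as required.

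The second bound is where the real work lies. Using $\E(Y_i\indic{Y_j>0})=\E(Y_i)-\E(Y_i\indic{Y_j=0})=2-\rho g_i'(\rho)$ together with $\P(Y_j>0)=1-(j/(j+1))^2=(2j+1)/(j+1)^2$, one obtains for $1\leq i\leq j$
\[
\E(Y_i\mid Y_j>0)=\frac{2(j+1)^3-2jm(m+1)}{(j+1)(2j+1)},\qquad m=j-i .
\]
I expect the main obstacle to be precisely the regime $i\leq j$ with $j$ much larger than $i$: the crude estimate $\E(Y_i\indic{Y_j>0})\leq\E(Y_i)=2$ only yields $\E(Y_i\mid Y_j>0)\lesssim j$, which is far too weak, so the negative correction $-2jm(m+1)$ must be kept. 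I would therefore clear denominators and verify the target inequality $\E(Y_i\mid Y_j>0)\leq 4i+4$ by checking that $4(i+1)(j+1)(2j+1)-\bigl(2(j+1)^3-2jm(m+1)\bigr)$, expanded after substituting $m=j-i$, equals $4ij^2+2i^2j+10ij+4j^2+4i+6j+2$, a polynomial all of whose monomials have nonnegative coefficients, hence nonnegative for $i,j\geq1$. The remaining range $i>j$ is then easy: there $\E(Y_i\indic{Y_j>0})=\E(Y_i)=2$, so $\E(Y_i\mid Y_j>0)=2(j+1)^2/(2j+1)\leq j+2\leq i+1\leq 4i+4$. This completes the plan.
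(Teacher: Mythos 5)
Your proof is correct, and it takes a genuinely different route from the paper's. The paper proves that for fixed $i$ both $\E(Y_i\mid Y_j=0)$ and $\E(Y_i\mid Y_j>0)$ are non-decreasing in $j$ (via a Harris-inequality coupling argument), and then reads off the bounds $2$ and $4i+4$ as the $j\to\infty$ limits computed from \eqref{gnformula} and \eqref{yn0}; the constants in the lemma are thus identified as the suprema over $j$. You instead compute the conditional expectations exactly for every finite pair $(i,j)$, using the branching property $\P(Y_j=0\mid Y_i)=f_{j-i}(0)^{Y_i}$ to get the closed forms $\E(Y_i\mid Y_j=0)=2m(m+1)/(j(j+1))$ and $\E(Y_i\mid Y_j>0)=\bigl(2(j+1)^3-2jm(m+1)\bigr)/\bigl((j+1)(2j+1)\bigr)$ with $m=j-i$, and then verify the inequalities by polynomial expansion; I checked the algebra, including the expansion $4(i+1)(j+1)(2j+1)-2(j+1)^3+2jm(m+1)=4ij^2+2i^2j+10ij+4j^2+4i+6j+2$, and the boundary cases $i\geq j$ and $i>j$, and all steps are sound. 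What each approach buys: the paper's monotonicity argument avoids all polynomial bookkeeping and makes transparent why $2$ and $4i+4$ are the right constants (they are the limiting values), but it requires the somewhat delicate Harris/FKG step; your computation is entirely elementary and self-contained, gives sharper finite-$j$ information (strict inequality for finite $j$), and sidesteps the correlation-inequality machinery at the cost of an explicit but routine expansion.
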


\begin {proof}
We use the fact that  for each fixed $i\geq0$,
both $\E(Y_i \mid Y_j = 0)$
and $\E(Y_i \mid Y_j > 0)$ are non-decreasing in $j$.
We thus obtain an upper bound
by letting $j\rightarrow\infty$.  Since the
process $(Y_i)_{i\geq0}$  dies out
with probability 1 we have that
$\lim_{j\rightarrow\infty}\E(Y_i \mid Y_j = 0) = \E(Y_i)=g_i'(1) = 2$ by \eqref{gnformula}.
We also have that
\begin {equation}
\E(Y_i \mid Y_j > 0) = \frac{g_i'(1)-\P(Y_{j-i}=0)g_i'(\P(Y_{j-i}=0))}{1-\P(Y_{j}=0)}  \rightarrow 4i+4
\end {equation}
as $j\rightarrow \infty$ where the convergence follows from \eqref{gnformula} and \eqref{yn0}.

The monotonicity in $j$ used above may be proved as follows.
Consider first 
$E(Y_i\mid Y_j>0)$ and start with the 
case $j\leq i$.  Then we have
\[
E(Y_i\mid Y_j>0)=\frac{E(Y_i\indic{Y_j>0})}{P(Y_j>0)}
=\frac{E(Y_i)}{P(Y_j>0)},
\]
since $\{Y_j=0\}\subseteq\{Y_i=0\}$.
Clearly $P(Y_j>0)\leq P(Y_{j-1}>0)$, so the statement
holds for all $j$ up to $i$.

Now consider the case $j\geq i$.  Write 
$p_j(k)=P(Y_i=k\mid Y_j>0)$.  Then we have
\[
E(Y_i\mid Y_{j+1}>0)
=\sum_{k\geq 1} k p_{j+1}(k)
=\sum_{k\geq 0}k p_j(k) \frac{p_{j+1}(k)}{p_j(k)}
=E\big[Y_i h_j(Y_i)\mid Y_j>0\big],
\]
where
\[
h_j(k)=\frac{p_{j+1}(k)}{p_j(k)}.
\]
We may rewrite
\[
h_j(k)=\frac{P(Y_{j+1}>0\mid Y_i=k)}{P(Y_{j}>0\mid Y_i=k)}
\frac{P(Y_j>0)}{P(Y_{j+1}>0)}
=c_j\frac{1-q_{j+1-i}^k}{1-q_{j-i}^k},
\]
where $c_j={P(Y_j>0)}/{P(Y_{j+1}>0})$
and $q_r$ is the probability that an individual present at
time $i$ has no offspring at time $i+r$.
Since $q_{r+1}\geq q_r$ we have that $h_j(k)$ is
non-decreasing in $k$.  It follows from Harris' inequality that 
\[
E(Y_i\mid Y_{j+1}>0)\geq E(Y_i\mid Y_j>0)
E(h_j(Y_i)\mid Y_j>0)=E(Y_i\mid Y_j>0),
\]
as required, since
\[
E(h_j(Y_i)\mid Y_j>0)=
\sum_{k\geq 0}\frac{p_{j+1}(k)}{p_j(k)}p_j(k)=1.
\]
The argument for $E(Y_i\mid Y_j=0)$ is similar,
using the increasing function
$(q_{j+1-i}/q_{j-i})^k$ in place of $h_j(k)$.
\end {proof}

\begin {thebibliography}{99}

\bibitem{aldous:1986} D. Aldous and J. Pitman, {\it Tree-valued Markov chains derived
from Galton–Watson processes.} Ann. Inst. H. Poincare Probab. Statist. \textbf{34} (1998), no. 5, 637–686. 
 
\bibitem{alexander:1982} S.~Alexander and R.~Orbach, {\it Density of states on fractals: ``fractons'',} J.~Physique (Paris)~Lett. \textbf{43} (1982), 625-631.

\bibitem{angel:2003} O. Angel and O.~Schramm, {\it Uniform infinite planar triangulations.} Comm. Math. Phys., \textbf{241}, 2-3 (2003), 191--213.

\bibitem{barlow:2006} M.~T.~Barlow and T.~Kumagai, {\it Random walk on the incipient infinite cluster on trees.} Illinois J. Math.~\textbf{50}, Number 1-4 (2006), 33-65. 
\bibitem{berger:2002} N.~Berger, 
{\it Transience, recurrence and critical behaviour
for long-range percolation.} 
Comm.~Math.~Phys.~\textbf{226} (2002), 531--558.

\bibitem{benjamini:2001} I.~Benjamini and O.~Schramm, {\it Recurrence of distributional limits of finite
planar graphs.} Electron.~J.~Probab., \textbf{6} (2001), no.~23, 13 pp.
 
\bibitem{bettinelli} J.~Bettinelli,
{\it Scaling limit of random planar quadrangulations with a boundary.}
Ann. Inst. H. Poincar{\'e} Probab. Statist. (to appear)

\bibitem{bouttier:2004} J.~Bouttier, P.~Di Francesco and E.~Guitter, {\it Planar maps as labelled mobiles.} Electron.~J.~Combin.~\textbf{11} (2004) R69.

\bibitem{chassaing:2006} P.~Chassaing and B.~Durhuus, {\it Local limit of labeled trees and expected volume growth in a random quadrangulations.} Ann.~Probab.~\textbf{34}, 3 (2006) 879--917.

\bibitem{curien:2012a} N.~Curien, L.~Ménard and G.~Miermont, {\it A
    view from infinity of the uniform infinite planar
    quadrangulation.} ALEA~\textbf{10} (2013), no.~1, 45--88.
\bibitem{durhuus:2007} B. Durhuus, T. Jonsson, and J. F. Wheater, {\it The spectral dimension of generic trees.} J. Stat. Phys.~\textbf{128} (2007), no. 5, 1237–1260.
\bibitem{fuji:2008} I.~Fuji and T.~Kumagai, {\it Heat kernel estimation on the incipient infinite cluster for critical branching processes.} Proc. of the RIMS workshop on Stochastic Analysis and Applications(2008), 85-95.

\bibitem{gurel:2013} O.~Gurel-Gurevich and A.~Nachmias, {\it Recurrence of planar graph limits,} Ann.~Maths.~ \textbf{177} (2013), 761-781.

\bibitem{gut:stopped} A.~Gut,
{\it Stopped random walks,} 2nd ed.  Springer 2009.  

\bibitem{janson:2011} S.~Janson, T.~Jonsson and S.~O.~Stefansson, {\it Random trees with superexponential branching weights.} J.~Phys.~A: Math.~Theor. \textbf{44} (2011), 485002.

\bibitem{janson:2012sgt} S.~Janson, {\it Simply generated trees, conditioned Galton--Watson trees, random allocations and condensation.} Probability Surveys {\textbf{9}} (2012), 103--252.

\bibitem{janson:2012sl} S.~Janson and S.~Ö.~Stefánsson, {\it Scaling
    limits of random planar maps with a unique large face.}  (To appear.)
  arXiv:1212.5072.
\bibitem{jonsson:2011}  T.~Jonsson and  S.~\"O.~Stef\'ansson, {\it Condensation in nongeneric trees.} {Journal of Statistical Physics}, \textbf{142} (2011), no. 2,
277--313.

\bibitem{kennedy:1975} D. P. Kennedy, {\it The Galton–Watson process conditioned on the total progeny.} J. Appl. Probab. \textbf{12} (1975), 800–806.

\bibitem{kesten:1986} H.~Kesten, {\it Subdiffusive behaviour of random walk on a random cluster.} Ann.~Inst.~H.~Poincaré Probab.~Statist. \textbf{22} (1986) no.~4, 425-487.

\bibitem{kmt}  J.~Koml\'os, P.~Major and G.~Tusn\'ady ,
{\it An approximation of partial sums of independent 
RV'-s, and the sample DF. I}.  Prob. Th. Rel. Fields.
\textbf{32} (1975),  number 1--2, 111--131.

\bibitem{kozma:2009} G.~Kozma and A.~Nachmias, {\it The Alexander-Orbach conjecture holds in high dimensions.} Inventiones mathematicae, \textbf{178}, Issue 3, (2009) 635-654.

\bibitem{krikun:2005} M.~Krikun, {\it Local structure of random
  quadrangulations.} arXiv:0512304.

\bibitem{kumagai:2008} T.~Kumagai and J.~Misumi, {\it Heat kernel estimates for strongly recurrent random walk on random media.} J.~Theor.~Probab.~\textbf{21} (2008), 910-935.

\bibitem{legall:notes} J.-F.~Le Gall and G.~Miermont, {\it Scaling limits of
  random trees and planar maps.} Lecture notes for the Clay Mathematical
  Institute Summer School in Buzios, July 11 - August 7, (2010),
  arXiv:1101.4856v1.
\bibitem{legall:2007} J.-F.~Le Gall, {\it The topological structure of
  scaling limits of large planar maps.} Invent. Math., \textbf{169}, (2007) 621--670.

\bibitem{legall:2011} J.-F.~Le Gall and G. Miermont, {\it Scaling limits of random planar maps with large faces.} Ann. of Probab. \textbf{39}, 1 (2011), 1--69.

\bibitem{legall:unique} J.-F.~Le Gall, {\it Uniqueness and universality of the Brownian map.} arXiv:1105.4842.

\bibitem{lyons-peres} R.~L:yons and Y.~Peres,
{\it Probability on trees and networks}.  CUP, 2005.

\bibitem{marckert:2007} J.-F.~Marckert and G.~Miermont,  {\it Invariance principles for random bipartite planar maps.} Ann. Probab. \textbf{35} (2007), 1642–1705. 

\bibitem{menard:2013} L.~Ménard and P.~Nolin, {\it Percolation on uniform infinite planar maps}, arXiv:1302.2851.

\bibitem{miermont:unique} G.~Miermont, {\it The Brownian map is the scaling limit of uniform random plane quadrangulations.} arXiv:1104.1606.

\bibitem{miermont2006invariance}
G.~Miermont,
  {\it An invariance principle for random planar maps.}
DMTCS Proceedings \textbf{1}, 2006.

 \bibitem{schaeffer:thesis} G.~Schaeffer, {\it Conjugaison d’arbres et cartes combinatoires aléatoires.} Ph.D. thesis, Univ. Bordeaux I (1998).

\end {thebibliography}

\end{document}